\newcommand{\tri}{\mathcal{T}}
\newcommand{\manifold}{\mathcal{M}}
\newtheorem{question}[theorem]{Question}
\newtheorem{algorithm}[theorem]{Algorithm}
\newtheorem{problem}[theorem]{Problem}
\Crefname{question}{Question}{Questions}
\Crefname{equestion}{Question}{Questions}
\Crefname{stepi}{Step}{Steps}
\Crefname{propi}{Property}{Properties}
\Crefname{casei}{Case}{Cases}
\Crefname{problem}{Problem}{Problems}
\title{A Practical Algorithm for Knot Factorisation}
\author{Alexander He}{401 Mathematical Sciences, Oklahoma State University, Stillwater, OK 74078, United States \and \url{https://sites.google.com/view/alex-he} }{alex.he@okstate.edu}{https://orcid.org/0000-0002-2189-4942}{Supported by an Australian Government Research Training Program Scholarship for part of this project}
\author{Eric Sedgwick}{School of Computing, DePaul University, 243 S Wabash Ave, Chicago, IL 60604, United States}{ESedgwick@cdm.depaul.edu}{https://orcid.org/0000-0003-1995-9097}{}
\author{Jonathan Spreer}{School of Mathematics and Statistics, The University of Sydney, Carslaw Building F07, Camperdown NSW 2006, Australia}{jonathan.spreer@sydney.edu.au}{https://orcid.org/0000-0001-6865-9483}{Supported by the Australian Research Council under the Discovery Project scheme (grant number DP220102588)}
\authorrunning{A.~He and E.~Sedgwick and J.~Spreer} 
\keywords{Prime and composite knots, (crushing) normal surfaces, edge-ideal triangulations, co-NP certificate, triangulation complexity} 
\begin{document}

\maketitle

\begin{abstract}
We present an algorithm for computing the prime factorisation of a knot, which is practical in the following sense:
using Regina, we give an implementation that works well for inputs of reasonable size, including prime knots from the $19$-crossing census.
The main new ingredient in this work is an object that we call an ``edge-ideal triangulation'', which is what our algorithm uses to represent knots.
As other applications, we give an alternative proof that prime knot recognition is in \coNP, and present some new complexity results for triangulations.
Beyond knots, our work showcases edge-ideal triangulations as a tool for potential applications in $3$-manifold topology.
\end{abstract}

\section{Introduction}\label{sec:intro}

Computational problems in $3$-manifold topology often exhibit a gap between theory and practice.
For example, consider the \textsc{Unknot Recognition} problem, which takes a knot $K$ as input, and asks whether $K$ is equivalent to the unknot (see \Cref{sec:background} for a review of all technical terminology used in this introduction).
Although Regina~\cite{Regina} offers an \textsc{Unknot Recognition} algorithm that experimentally exhibits polynomial-time behaviour~\cite{burton2012unknot},
in theory this algorithm is exponential-time in the worst case.
There are other theoretical upper bounds on the computational complexity of \textsc{Unknot Recognition}, but these often use very different (and in some cases, much more complicated) techniques:
it is in both \NP~\cite{hass1999computational} and \coNP~\cite{kuperberg2014knottedness,lackenby2016efficient},
and Lackenby has announced a quasipolynomial-time algorithm \cite{LackenbyTalk}.
It remains unknown whether there is a (deterministic) polynomial-time algorithm for \textsc{Unknot Recognition}.

We are interested in algorithmically recognising other types of knots as well. Baldwin and Sivek give some complexity results for such other recognition problems in~\cite{baldwin2019complexity}:
they show that a range of natural problems are in \NP\ and/or \coNP\ (some unconditionally, others conditional on the Generalised Riemann Hypothesis).
Prior to the work in the present paper, none of these problems had algorithms with practical software implementations.

In this paper, we focus on the problem of deciding whether a knot is prime or composite, and in the composite case on computing its prime factorisation.
This problem arises from the classical result of Schubert~\cite{schubert49unique} that every knot has a unique prime factorisation. We study variants of this problem, in decreasing order of specificity:

\begin{problem}[\textsc{Knot Factorisation}]\label{problem:factorisation}
\hfill
\begin{description}
\item[\textup{\texttt{INPUT:}}]
A knot $K$.
\item[\textup{\texttt{OUTPUT:}}]
A collection of knots giving the prime factorisation of $K$.
\end{description}
\end{problem}

\begin{problem}[$k$-\textsc{Summands}]\label{problem:summands}
\hfill
\begin{description}
\item[\textup{\texttt{INPUT:}}]
A knot $K$, and a positive integer $k$.
\item[\textup{\texttt{QUESTION:}}]
Does the prime factorisation of $K$ have at least $k$ nontrivial summands?
\end{description}
\end{problem}

\begin{problem}[\textsc{Composite Knot Recognition}]\label{problem:composite}
\hfill
\begin{description}
\item[\textup{\texttt{INPUT:}}]
A knot $K$.
\item[\textup{\texttt{QUESTION:}}]
Is $K$ a composite knot?
\end{description}
\end{problem}

Most commonly, the input knot $K$ is presented as a knot diagram.
However, a diagram is difficult to work with in many settings, and the first step of many algorithms is to convert the diagram into a more useful form.
Often, the conversion is to a triangulation of the \textbf{knot exterior}:
the $3$-manifold obtained by deleting a small open neighbourhood of $K$ from the ambient $3$-sphere.
This is fine for decision problems like \Cref{problem:summands,problem:composite};
however, for \textsc{Knot Factorisation}, if we require the output to be in the form of knot diagrams -- rather than triangulations of knot exteriors -- then we have
the additional challenge of converting these triangulations to diagrams, which is itself a highly nontrivial problem~\cite{dunfield2022diagram}.

Our approach is to encode knots as \textbf{edge-ideal triangulations} (\textbf{H-triangulations} in \cite{aribi2023Htriangulation}).
For a knot $K$, an edge-ideal triangulation is a pair $(\tri,\ell)$, where $\tri$ is a triangulation of the $3$-sphere $\mathbb{S}^3$,
and $\ell$ is an embedded loop (called an \textbf{ideal loop}) of edges (called \textbf{ideal edges}) in $\tri$, such that $\ell$ is topologically equivalent to $K$.
This idea extends to triangulations $\tri$ of arbitrary compact $3$-manifolds, and collections of loops in the $1$-skeleton of $\tri$.

Embedding a knot in the $1$-skeleton of a triangulation is, on its own, not a new idea. For knots in $3$-manifolds this is a common choice of combinatorial encoding; e.g., see~\cite{agol2006computational,lackenby2016efficient}.
But even for knots in $\mathbb{S}^3$, this setting has been described and used in many publications with various backgrounds;
e.g., see~\cite{aribi2023Htriangulation,Benedetti12,BenedettiLutz:KnotsInCollapsible,burton2023counterexamples,ibarra2024Htriangulation,Lickorish1991} and the references therein.

The novelty of our approach is to view an ideal loop as representing a boundary component of a $3$-manifold,
loosely analogous to how an ideal triangulation (in the usual sense) has boundary components given by the ideal vertices.
More precisely, given an edge-ideal triangulation $(\tri,\ell)$ of a knot $K$, we consider a small open solid torus neighbourhood $N$ of $\ell$,
and we view $(\tri,\ell)$ as a combinatorial presentation of the $3$-manifold $\manifold$ given by deleting $N$ from $\tri$.
In this article, $\tri$ is a $3$-sphere and hence $\manifold$ is just the exterior of $K$.

To see why this perspective is useful, consider a normal surface $S$ in $\tri$.
The intersection of $S$ with the solid torus $N$ is a disjoint union of meridian discs, with one such disc for each point in $S\cap\ell$.
After deleting $N$, $S$ becomes a surface that intersects the torus boundary of $\manifold$ in curves that all have the same slope
(in our case, the slope of the meridian of the knot).
Hence, the edge-ideal triangulation carries more information than just the topology of $\manifold$: it also prescribes a slope along which normal surfaces must intersect the boundary of $\manifold$.

We use this setting to factorise knots in the $3$-sphere. However, we believe other applications for edge-ideal triangulations exist.
For instance, consider the problem of recognising Seifert fibred spaces (SFS) with nonempty boundary.
This was shown to be in \NP\ by Jackson~\cite{jackson2024seifert}, but the techniques are not well-suited to practical implementation.
SFS with boundary fit well into the framework of edge-ideal triangulations:
any such SFS $\manifold$ can be characterised using ``vertical'' essential annuli, which intersect components of $\partial \manifold$ in prescribed slopes.
Exploring this idea is work in progress.

The utility of edge-ideal triangulations goes beyond the observation about boundary slopes of surfaces.
It also allows us to apply the tool of \textbf{crushing} a normal surface, discussed in detail below. Crushing plays a central role in a number of practical $3$-manifold algorithms provided by Regina~\cite{burton2013regina,burton2014crushing,Regina}, such as unknot recognition or $3$-sphere recognition.

Our work extends this to give practical algorithms for \Cref{problem:factorisation,problem:summands,problem:composite}.
In \Cref{ssec:buildEdgeIdeal}, we give a straightforward linear-time procedure to convert
a diagram of a knot $K$ into an edge-ideal triangulation of $K$. Hence, our techniques
apply equally to diagrams, as well as edge-ideal triangulations of the exterior of the
input knot. However, converting an edge-ideal triangulation back into a knot diagram is
sufficiently difficult to be beyond the scope of this paper. Thus, we settle for the following:

\begin{theorem}\label{thm:summandsAlgo}
There is an algorithm  for \textup{\textsc{Knot Factorisation}} that
returns the prime factorisation of the input knot $K$ as a collection of edge-ideal triangulations.
\end{theorem}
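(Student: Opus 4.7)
The plan is to combine Schubert's uniqueness theorem with normal surface theory and the crushing operation, working throughout in the edge-ideal setting so that we never need to leave the category of triangulations of $\mathbb{S}^3$ equipped with a distinguished ideal loop. The overall algorithm is a loop: search for a decomposing sphere, crush it to split the knot, and recurse until no further decompositions are possible.

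First, I preprocess the input. If $K$ is given as a diagram, I apply the linear-time procedure from \Cref{ssec:buildEdgeIdeal} to produce an edge-ideal triangulation $(\tri,\ell)$ of $K$; otherwise I start directly with the given $(\tri,\ell)$. The main loop then searches for a \emph{decomposing sphere}: a $2$-sphere $S \subset \mathbb{S}^3$ meeting $K$ in exactly two points, neither side of which cuts off an unknotted arc. By Schubert's theorem, $K$ is composite iff such an $S$ exists. Using the observation from the introduction that normal surfaces in $\tri$ intersect $\ell$ in points corresponding (after removing a neighbourhood of $\ell$) to meridional boundary curves, I would adapt the standard Jaco--Oertel argument to show that whenever a decomposing sphere exists, some vertex normal $2$-sphere in $(\tri,\ell)$ meets $\ell$ in exactly two points and is itself decomposing. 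Thus, I enumerate vertex normal surfaces, filter for $2$-spheres with $|S \cap \ell|=2$, and test each candidate for essentiality (triviality is easy to detect from the two arcs of $\ell$ cut off by $S$).

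If no decomposing sphere is found, I return $(\tri,\ell)$ as prime. Otherwise, I crush the chosen $S$. The key structural claim is that crushing $S$ in $(\tri,\ell)$ yields two edge-ideal triangulations $(\tri_1,\ell_1)$ and $(\tri_2,\ell_2)$, each a triangulation of $\mathbb{S}^3$, whose ideal loops represent the two summands of $K$ determined by $S$. I would then recurse on each piece. For termination, I rely on the fact that crushing strictly decreases the number of tetrahedra (or, more carefully, decreases a lexicographic complexity that also tracks the number of ideal edges), so only finitely many iterations occur before every component is prime. Assembling all terminal outputs gives the desired collection of edge-ideal triangulations, and uniqueness of prime factorisation guarantees correctness of the list.

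The main obstacle is verifying that crushing behaves correctly with respect to the ideal loop. Burton's crushing analysis is formulated for normal surfaces in closed or bounded triangulations, whereas here the sphere $S$ passes transversely through $\ell$ in two points, and the collapse happens inside tetrahedra that also contain ideal edges. The bulk of the work is a careful case analysis at the two discs of $S$ meeting $\ell$: one must track how the resulting quotient tetrahedra fit together, confirm that each output component is again a triangulated $3$-sphere, and show that the two "cap arcs" arising from the crushed sphere splice correctly onto the two halves of $\ell$ to form the loops $\ell_1,\ell_2$ representing the summands. Once this local analysis is in place, everything else follows from standard arguments about vertex normal surfaces and crushing.
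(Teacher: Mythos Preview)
Your outline has the right shape but contains a genuine gap and two structural choices that diverge from the paper in ways that matter.

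The gap is in the sentence ``I would adapt the standard Jaco--Oertel argument to show that whenever a decomposing sphere exists, some vertex normal $2$-sphere \ldots\ is itself decomposing.'' The paper does \emph{not} prove this, and it is not clear that the usual minimisation arguments preserve essentiality: when you pass from an arbitrary normal $2$-sphere with $w_\ell=2$ to a (quad) vertex one via \Cref{lem:x+y,lem:quadVertex}, the resulting sphere may well cut off an unknotted arc. The paper sidesteps this entirely: it crushes \emph{any} quad vertex $2$-sphere with $w_\ell(S)\in\{0,2\}$, essential or not, and relies on the strict drop in tetrahedron count for termination (\Cref{thm:algCorrect}). Primeness is certified only by the \emph{absence} of such a sphere (\Cref{cor:quadVertexComposite}), and nontriviality of each terminal factor is checked once, at the end, by unknot recognition. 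Your plan instead requires essentiality to be detectable among vertex surfaces and to be tested for every candidate; the latter is itself unknot recognition on each arc, so ``triviality is easy to detect'' understates the cost.

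Two further points. First, ``vertex normal'' is not enough: the crushing analysis that lets you track the ideal loop (\Cref{lem:crushW=0,lem:crushW=2}) depends on the characterisation of \emph{quad} vertex $2$-spheres via trivial induced orbits (\Cref{lem:cycleCorner,cor:cornerRegion,lem:quadVertexOrbits}); for merely standard vertex spheres the type-$1$ orbits need not be trivial, and your ``careful case analysis at the two discs of $S$ meeting $\ell$'' would not go through. Second, the claim that crushing always yields \emph{two} edge-ideal triangulations is false in general: \Cref{lem:crushW=2} allows one component, a non-loop ideal edge, or outright deletion when a side is unknotted. The paper's algorithm handles all of these cases uniformly by simply discarding components without an ideal loop; your recursion needs to do the same.
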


The algorithm (see \Cref{algo:summands}) relies heavily on the tool of crushing normal surfaces.
For our implementation, we use Regina's~\cite{Regina} longstanding crushing functionality, which is already known to be effective in practice.
As it turns out, this practicability extends to our implementation: we are able to run our algorithm on prime knots from the census up to $19$ crossings~\cite{burton2020knots},
as well as composite knots constructed by summing up to $8$ knots sampled from the census (hence, knots with up to $152$ crossings), including some ``hard'' diagrams of composite knots with up $70$ crossings.
See \Cref{ssec:experiment} for details on these experiments.

In addition to the practical implications, we also adapt our techniques to obtain theoretical upper bounds on computational complexity. Specifically, we prove the following:

\begin{theorem}\label{thm:summandsNP}
\Cref{problem:summands} ($k$\textup{-\textsc{Summands}}) is in \NP.
\end{theorem}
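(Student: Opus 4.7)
The plan is to exhibit a polynomial-size NP certificate together with a polynomial-time verifier. The certificate has three parts: (i) an edge-ideal triangulation $(\tri, \ell)$ of $K$, produced from the input diagram by the linear-time construction of \Cref{ssec:buildEdgeIdeal}; (ii) normal-coordinate vectors for $k-1$ pairwise disjoint embedded 2-spheres $S_1, \ldots, S_{k-1}$ in $\tri$, each meeting $\ell$ transversely in exactly two points; and (iii) for each of the $k$ knots $K_1, \ldots, K_k$ that arise as summands when $\mathbb{S}^3$ is cut along $\bigcup_i S_i$ and the resulting arcs are closed up, an NP certificate that $K_i$ is non-trivial. Since \textsc{Unknot Recognition} is in \coNP\ \cite{kuperberg2014knottedness,lackenby2016efficient}, polynomial-size certificates of non-triviality are available as the required coNP witnesses for unknottedness.

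The verifier first checks that $(\tri, \ell)$ really represents $K$, which is immediate from the explicit linear-time construction. From the coordinate vectors it then computes the Euler characteristic of each $S_i$, the cardinality $|S_i \cap \ell|$, and pairwise disjointness---all standard, polynomial-time normal-surface calculations. Using the $S_i$ as cutting instructions, the verifier produces edge-ideal triangulations $(\tri_i, \ell_i)$ of each summand $K_i$ by cutting and capping the spheres and closing up each severed arc of $\ell$ with a single new ideal edge; the resulting triangulations have size polynomial in the input. Finally, the verifier runs the attached unknot-recognition certificates to confirm that each $K_i$ is non-trivial, which certifies that the prime factorisation of $K$ has at least $k$ non-trivial summands.

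The main obstacle is bounding the bit complexity of the coordinate vectors of the $S_i$, since normal-surface coordinates for an arbitrary essential surface can a priori be doubly exponential. I would handle this by an inductive argument: whenever $K$ has at least two non-trivial summands, some essential decomposing sphere meeting $\ell$ in exactly two points can be realised as a vertex solution of the normal-surface matching equations in $\tri$ (restricted to the face cutting out surfaces with two meridional intersections with $\ell$), hence has polynomial bit complexity by Hass-Lagarias-Pippenger-style bounds \cite{hass1999computational}. Cutting along this sphere yields two edge-ideal triangulations of polynomial size, to each of which the induction hypothesis applies; concatenating the resulting pieces gives $k-1$ polynomial-size spheres. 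The technical step here---producing a vertex-solution decomposing sphere inside the subspace of normal surfaces with exactly two meridional intersections with $\ell$---parallels the normal-surface existence arguments already used in the \NP\ bound for \textsc{Unknot Recognition}, and I expect it to go through in the edge-ideal setting because, as emphasised in the introduction, the ideal loop correctly encodes the meridional boundary slope that a decomposing sphere must meet.
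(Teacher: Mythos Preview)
Your outline has the right shape---a sequence of decomposing spheres plus nontriviality certificates---but the verifier step ``cut along the $S_i$ and cap off'' is a genuine gap. Cutting a triangulation along a normal surface produces the induced cell decomposition, whose number of cells is proportional to the \emph{weight} of the surface, i.e.\ to the sum of its normal coordinates. Even for a (standard or quad) vertex surface this weight can be exponential in $|\tri|$ by the Hass--Lagarias--Pippenger bounds you cite. So after the very first cut your ``polynomial-size'' edge-ideal triangulation of a summand may already have exponentially many tetrahedra, and the induction collapses: the second vertex sphere now lives in an exponential-size triangulation and has doubly-exponential coordinates, and so on. Your certificate format (all $S_i$ disjoint \emph{in $\tri$}) and your existence argument (one sphere at a time in successive cut-open pieces) are also not the same object; pulling the later spheres back to $\tri$ destroys their vertex-surface status.

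The paper's proof replaces cutting by \emph{crushing}. Crushing a nontrivial normal sphere strictly decreases the number of tetrahedra, so the certificate is a chain $(\tri_0,L_0),\ldots,(\tri_m,L_m)$ with $m\le|\tri_0|$ and $|\tri_{i+1}|<|\tri_i|$, together with one quad vertex $2$-sphere $S_i$ per step. Two further pieces of machinery are then needed, neither of which is automatic: (a) tracking how the ideal loop transforms under crushing is done in polynomial time via the Agol--Hass--Thurston orbit-counting algorithm (Section~\ref{ssec:orbitSegment}), and (b) the verifier must certify that each $S_i$ is a \emph{quad vertex} $2$-sphere---not merely a $2$-sphere---because this is what guarantees (via \Cref{lem:crushW=0,lem:crushW=2}) that crushing preserves the connected-sum invariant; the paper gives a polynomial-time test for this in Section~\ref{ssec:certifyQuadVertexSphere}. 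Your proposal would be rescued by swapping ``cut and cap'' for ``crush'', but then these two ingredients become essential and are precisely the technical content of the paper's argument.
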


Since \Cref{problem:composite} is a special case of \Cref{problem:summands}, this yields an alternative proof of one of Baldwin and Sivek's results~\cite[Theorem~7.3]{baldwin2019complexity}:
\textsc{Composite Knot Recognition} is in \NP.

The operation of crushing a normal surface is crucial to both the practicality of \Cref{algo:summands} and the proof of \Cref{thm:summandsNP}, so we now elaborate on its role in our work.

Inspired by unpublished work of Casson, crushing was first studied in detail by Jaco and Rubinstein~\cite{jaco2003efficient},
and later reformulated into a more implementation-friendly framework by Burton~\cite{burton2014crushing}.
To date, one limitation of crushing is that the theory remains relatively poorly developed for surfaces other than $2$-spheres and discs (though see~\cite{BHHP2024arXiv,Ichihara2023Crush}).
At first glance, this is an obstacle for the applications we have mentioned.
For a knot $K$, testing whether $K$ is composite entails finding a certain essential
annulus (in the exterior of $K$).
The same applies to recognising SFS with boundary.

By working with an edge-ideal triangulation $(\tri,\ell)$, we circumvent this issue:
the annuli show up in $\tri$ as $2$-spheres that intersect $\ell$ in two points.
Thus, instead of crushing annuli, we are able to work in the better-understood setting of crushing $2$-spheres.
In \Cref{sec:identifyingIdealEdges}, we develop the theory required to apply crushing in the presence of ideal loops.

The reason crushing is useful for practical computations is that it never increases the number of tetrahedra in a triangulation. This allows us to factor a knot efficiently: we can repeatedly crush surfaces to progressively ``decompose'' the knot, whilst simultaneously keeping tight control on the amount of data needed to encode the resulting factorisation.

Another feature of our work is that we work exclusively with \textbf{quad vertex} $2$-spheres in our edge-ideal triangulations;
we show that suitable $2$-spheres exist in \Cref{sec:existenceSpheres}.
Theoretically, this is necessary to control how crushing affects the ideal loop;
see \Cref{sec:identifyingIdealEdges}.
But even if this theoretical challenge were not present, working with quad (instead of standard) coordinates is often crucial for improving the practical performance of algorithms involving normal surfaces.

We combine the aforementioned ideas to formulate our main algorithm (\Cref{algo:summands}) in \Cref{sec:algo};
we prove correctness of the algorithm, discuss some implementation details, and summarise our experimental results.
In \Cref{sec:np}, we show how our techniques also give a proof of \Cref{thm:summandsNP}.
Finally, we discuss further applications in relation to triangulation complexity in \Cref{sec:further}.

\section{Background}\label{sec:background}

\subsection{Knots}\label{ssec:knots}

A \textbf{knot} is an embedding of a circle $K: S^1 \to \mathbb{R}^3$ into $3$-space.
We sometimes embed knots into the $3$-sphere $\mathbb{S}^3$, instead of $\mathbb{R}^3$;
since $\mathbb{S}^3$ is the one-point compactification of $\mathbb{R}^3$,
this has the benefit of making our topological space compact, but otherwise makes very little difference to the theory.
Two knots are \textbf{equivalent}, or of the \textbf{same type}, if there is an ambient isotopy $\Phi: \mathbb{R}^3 \times [0,1] \to \mathbb{R}^3$ taking the image of one to the image of the other;
intuitively, two knots are equivalent if one can be deformed into the other without passing the knot through itself.
If a knot is equivalent to the trivial unknotted circle, we call it the \textbf{unknot}, or the \textbf{trivial knot};
otherwise, the knot is \textbf{nontrivial}.

\begin{figure}[htbp]
\centering
	\begin{minipage}[t]{0.175\textwidth}
	\centering
		\includegraphics[scale=0.5]{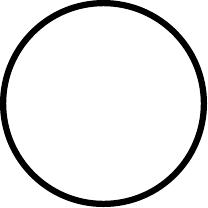}
	\subcaption{The unknot.}
	\label{subfig:unknot}
	\end{minipage}
	\hfill
	\begin{minipage}[t]{0.775\textwidth}
	\centering
		\begin{tikzpicture}
		\useasboundingbox (0,-1) rectangle (7.5,1);

		\node at (0,0) {
			\includegraphics[scale=0.5]{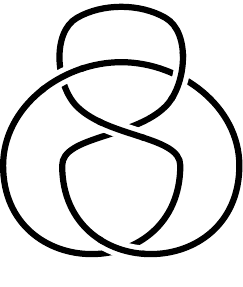}
		};
		\node at (2.5,0) {
			\includegraphics[scale=0.5]{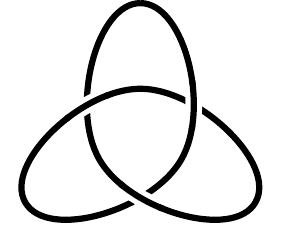}
		};
		\node at (6.5,0) {
			\includegraphics[scale=0.5]{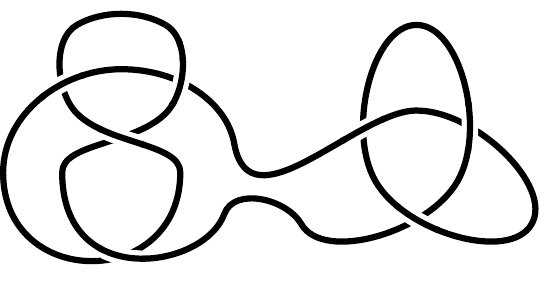}
		};
		\end{tikzpicture}
	\subcaption{Two nontrivial knots (left and middle), and a connected sum of them (right).}
	\label{subfig:connectedSum}
	\end{minipage}
\caption{Some examples of knot diagrams.}
\label{fig:knotDiagrams}
\end{figure}

We typically represent a knot by a \textbf{diagram}, obtained by generically projecting the knot into a plane,
while keeping information about which strand goes over and which goes under at \textbf{crossings} of the projection.
Given two knots $K_1$ and $K_2$, we can construct a \textbf{connected sum} $K= K_1 \# K_2$ by cutting both $K_1$ and $K_2$ and gluing the ends together to form a new knot.
A knot is \textbf{composite} if it is the connected sum of two nontrivial knots;
otherwise, the knot is \textbf{prime}.
Every nontrivial knot $K$ has a \textbf{factorisation} $K = K_1 \# \ldots \# K_\ell$ into
nontrivial prime knots $K_i$, $1\leqslant i \leqslant \ell$, and this factorisation is unique up to re-ordering~\cite{schubert49unique}.

\subsection{Triangulations of manifolds}\label{ssec:triangulations}

A \textbf{triangulation} $\tri$ (of \textbf{size} $|\tri|:=n$) is given by taking a finite set of disjoint tetrahedra $\Delta_1, \ldots , \Delta_n$, and gluing them
together along their $4n$ boundary triangles in pairs.
Let $\Phi_1 , \ldots , \Phi_{2n}$ denote the gluings. The quotient space
$\tri = \{\Delta_1, \ldots , \Delta_n\} / \{\Phi_1 , \ldots , \Phi_{2n}\}$
forms a closed $3$-manifold, in which case we call $\tri$ a \textbf{$3$-manifold triangulation}, if and only if: \emph{(a)} no edge is identified to itself in reverse;
and \emph{(b)} for each vertex $v$ of $\tri$, the \textbf{link} of $v$ -- i.e.,
the frontier of a small regular neighbourhood of $v$ -- forms a $2$-sphere.

As a result of the gluings, multiple lower-dimensional faces of  $\{\Delta_1, \ldots , \Delta_n\}$ become identified and we refer to the equivalence class of such faces as a single face of the triangulation $\tri$.
In fact, we often consider \textbf{one-vertex triangulations}, where all $4n$ vertices of $\{\Delta_1, \ldots , \Delta_n\}$ are identified to a single vertex. Two $3$-manifold triangulations are said to be {\bf combinatorially isomorphic} if one of them can be turned into the other by a relabelling of its faces.

Let $\manifold$ be a $3$-manifold with non-empty boundary $\partial \manifold$ a collection of tori. An {\bf ideal triangulation} of $\manifold$ is a triangulation with one vertex per component of $\partial \manifold$ such that its vertex link triangulates the boundary component, and $\manifold $ is homeomorphic to the underlying set of the triangulation with a neighbourhood of the vertices removed.

\subsection{Normal surface theory}\label{ssec:normalSurfaceTheory}

Consider a disc $D$ properly embedded in a tetrahedron $\Delta$ so that its boundary $\gamma$ is in \emph{general position} --
i.e., $\gamma$ is disjoint from the vertices of $\Delta$, and intersects the edges of $\Delta$ transversely.
The \textbf{weight} of $D$ in $\Delta$ is the number of points in which $D$ intersects the edges of $\Delta$.

A \textbf{normal surface} $S$ in a triangulation $\tri$ is a properly embedded surface such that:
\begin{itemize}
\item
$S$ is in general position:
it is disjoint from the vertices of $\tri$, and it intersects the edges and triangular faces of $\tri$ transversely.
\item
For each tetrahedron $\Delta$ of $\tri$,
the intersection $\Delta\cap S$ consists of a (possibly empty) disjoint union of finitely many positive-weight discs,
called \textbf{elementary discs}, such that the boundary of each elementary disc intersects each edge of $\Delta$ at most once.
\end{itemize}
The combinatorics of a normal surface remains unchanged under a \textbf{normal isotopy}:
an ambient isotopy that preserves each vertex, edge, face and tetrahedron of the triangulation.
Up to normal isotopy, each tetrahedron admits seven types of elementary discs:
four \textbf{triangle} types and three \textbf{quadrilateral} (or \textbf{quad}) types (see \Cref{fig:elemDiscs}).

\begin{figure}[htbp]
\centering
	\begin{minipage}[t]{0.32\textwidth}
	\centering
		\includegraphics[scale=0.5]{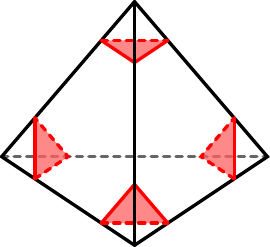}
	\subcaption{The four triangle types.}
	\label{subfig:elemTriangles}
	\end{minipage}
	\hfill
	\begin{minipage}[t]{0.67\textwidth}
	\centering
	\begin{tikzpicture}
		\node at (0,0) {
			\includegraphics[scale=0.5]{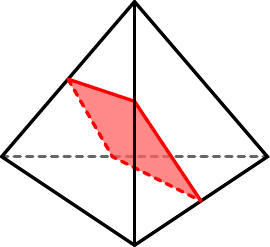}
		};
		\node at (2.5,0) {
			\includegraphics[scale=0.5]{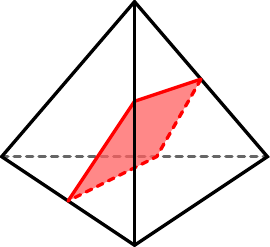}
		};
		\node at (5,0) {
			\includegraphics[scale=0.5]{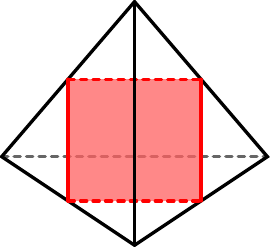}
		};
	\end{tikzpicture}
	\subcaption{The three quad types.}
	\label{subfig:elemQuads}
	\end{minipage}
\caption{The seven types of elementary disc.}
\label{fig:elemDiscs}
\end{figure}

The link of a vertex $v$ is an example of a normal surface. It is built entirely from elementary triangles (see \Cref{fig:vertexLink});
conversely a normal surface consisting only of triangles must be a union of vertex-linking components. With this in mind, we call a normal surface \textbf{trivial} if it only has triangles, and \textbf{nontrivial} if it has at least one quad.

\begin{figure}[htbp]
\centering
	\includegraphics[scale=1]{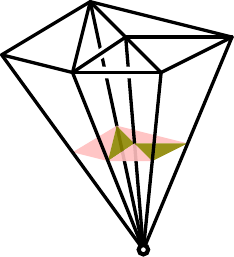}
\caption{The link of a vertex is a normal surface built entirely from triangles.}
\label{fig:vertexLink}
\end{figure}

Let $\tri$ be an $n$-tetrahedron triangulation.
Fix an arbitrary ordering of the tetrahedra, and in each tetrahedron fix an arbitrary ordering of the elementary triangle and quad types.
Consider a normal surface $S$ in $\tri$.
For $i\in\{1,\ldots,n\}$, $j\in\{1,2,3,4\}$ and $k\in\{1,2,3\}$, let $\Delta_i$ denote the $i$th tetrahedron,
let $t_{i,j}$ denote the number of triangles of the $j$th type in $\Delta_i$,
and let $q_{i,k}$ denote the number of quads of the $k$th type in $\Delta_i$.
The \textbf{standard coordinates} of $S$ are given by the $7n$-dimensional vector
\[
\mathbf{v}(S) :=
( t_{1,1},\, t_{1,2},\, t_{1,3},\, t_{1,4},\,
q_{1,1},\, q_{1,2},\, q_{1,3};\,
\ldots;\,
t_{n,1},\, t_{n,2},\, t_{n,3},\, t_{n,4},\,
q_{n,1},\, q_{n,2},\, q_{n,3} ).
\]
The \textbf{quad coordinates} of $S$ are given by the $3n$-dimensional vector
\[
\mathbf{q}(S) :=
( q_{1,1},\, q_{1,2},\, q_{1,3};\,
\ldots;\,
q_{n,1},\, q_{n,2},\, q_{n,3} ).
\]

Two normal surfaces are equivalent up to normal isotopy if and only if their standard coordinate vectors are equal.
However, this is not quite true for quad coordinates, since the corresponding vectors do not change if we add or remove vertex-linking components.
Call a normal surface \textbf{canonical} if it does not contain any vertex-linking components.
Two \emph{canonical} normal surfaces are equivalent up to normal isotopy if and only if their quad coordinate vectors are equal.

Not every vector in $\mathbb{Z}^{7n}$ describes the standard coordinates of a normal surface.
A vector
\[
\mathbf{v} =
( t_{1,1},\, t_{1,2},\, t_{1,3},\, t_{1,4},\,
q_{1,1},\, q_{1,2},\, q_{1,3};\,
\ldots;\,
t_{n,1},\, t_{n,2},\, t_{n,3},\, t_{n,4},\,
q_{n,1},\, q_{n,2},\, q_{n,3} )
\in \mathbb{Z}^{7n}
\]
is called \textbf{admissible} if it satisfies the following three necessary conditions:
\begin{itemize}
\item
Each component of $\mathbf{v}$ is non-negative.
\item
$M_\mathrm{S}\mathbf{v} = \mathbf{0}$, where $M_\mathrm{S}$ is the matrix of \textbf{standard matching equations} for $\tri$.
Roughly, this condition forces the elementary discs to ``match up'' across triangular faces of $\tri$,
and hence ensures that the union of all the elementary discs gives a well-defined surface. See \cite{hass1999computational} for more details.
\item
$\mathbf{v}$ satisfies the \textbf{quad constraints}:
for each $i\in\{1,\ldots,n\}$, at most one of the components $q_{i,1}$, $q_{i,2}$ or $q_{i,3}$ is nonzero.
This condition ensures that we can arrange the elementary discs in each tetrahedron so that
no two such discs intersect, and hence ensures that the resulting normal surface is embedded.
\end{itemize}
In fact, it turns out that $\mathbf{v}$ is a standard coordinate vector if and only if it is admissible.

Similarly, it turns out that
$
\mathbf{q} =
( q_{1,1},\, q_{1,2},\, q_{1,3};\,
\ldots;\,
q_{n,1},\, q_{n,2},\, q_{n,3} )
\in \mathbb{Z}^{3n}
$
is a quad coordinate vector if and only if it is \textbf{admissible}, meaning that:
\begin{itemize}
\item
Each component of $\mathbf{q}$ is non-negative.
\item
$M_\mathrm{Q}\mathbf{q} = \mathbf{0}$, where $M_\mathrm{Q}$ is the matrix of \textbf{quad matching equations} for $\tri$.
Roughly, this condition ensures that the quads around each edge of $\tri$ are compatible.
See \cite{Burton09Converting} for more details on quad matching equations.
\item
$\mathbf{q}$ satisfies the \textbf{quad constraints}.
\end{itemize}

\subsection{Vertex normal surfaces and exchange annuli}
\label{ssec:vertexAndExchange}

Although $\tri$ admits infinitely many (possibly disconnected) normal surfaces, it is often sufficient to restrict our attention to
the ``vertex normal surfaces'', which form a finite and tractable ``basis'' for the set of all normal surfaces.
To make this precise, notice that the first two requirements for admissibility (in either standard or quad coordinates) are homogeneous linear constraints,
so they define a polyhedral cone (in $\mathbb{R}^{7n}$ or $\mathbb{R}^{3n}$).
By scalar multiplication, we can project this cone down to a convex polytope consisting only of vectors whose coordinates sum to $1$;
we denote this polytope by $S(\tri)$ in standard coordinates, and by $Q(\tri)$ in quad coordinates.
A normal surface $S$ is \textbf{(standard) vertex} if $\mathbf{v}(S)$ is a scalar multiple of a vertex of $\mathrm{S}(\tri)$,
and no other vector in $\mathbb{Z}^{7n}$ is a smaller positive multiple of the same vertex.\footnote{%
Jaco and Tollefson~\cite{jaco1995algorithms} use slightly different terminology:
they refer to standard vertex normal surfaces as ``vertex solutions'',
and they use the term ``vertex surface'' to mean a connected \emph{two-sided} surface that is a multiple of a vertex of $\mathrm{S}(\tri)$.}
Similarly, $S$ is \textbf{quad vertex} if $\mathbf{q}(S)$ is a scalar multiple of a vertex of $\mathrm{Q}(\tri)$,
and no other vector in $\mathbb{Z}^{3n}$ is a smaller positive multiple of the same vertex.

For algorithmic purposes, one important feature of vertex normal surfaces is that the number of bits needed to encode their coordinates is polynomial in $|\tri|$.
Hass, Lagarias and Pippenger originally proved this in standard coordinates~\cite[Lemma~6.1]{hass1999computational},
but the same also holds in quad coordinates as an immediate consequence of the fact that every quad vertex surface is also standard vertex~\cite[Lemma 4.5]{Burton09Converting}.

Rather than working directly with the definition of vertex normal surfaces, it is often convenient to use other equivalent characterisations.
In particular, we use characterisations involving sums of surfaces and exchange annuli, so we now review these concepts.

The sum of two normal surfaces $S$ and $S'$ can be viewed through a purely linear algebraic lens:
provided $S\cup S'$ satisfies the quad constraints, the vector $\mathbf{v}(S)+\mathbf{v}(S')$
is admissible, and hence gives coordinates for a corresponding surface $S+S'$.
For a topological perspective, consider the components $\alpha_1, \ldots, \alpha_k$ of $S\cap S'$.
Assuming again that $S\cup S'$ satisfies the quad constraints, it turns out that we can obtain the same normal surface $S+S'$ by
performing cut-and-paste operations known as \textbf{regular exchanges} along each of the curves $\alpha_i$.
As illustrated in \Cref{fig:regExchange}, the ``location'' of a regular exchange can be tracked by augmenting
$S+S'$ with an ``exchange surface'' given by a union of $0$-weight discs.
See~\cite[Section~2]{jaco1995algorithms} for a more precise and detailed description of these ideas.
In our setting, the exchange surfaces are always annuli.

\begin{figure}[htbp]
\centering
	\begin{minipage}[t]{0.45\textwidth}
	\centering
		\includegraphics[scale=1]{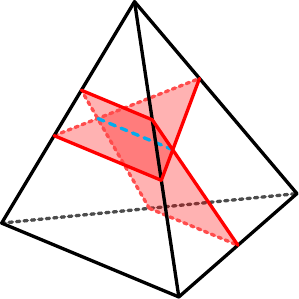}
	\subcaption{Before:
	The elementary discs intersect along an arc (blue).}
	\label{subfig:regExchangeArc}
	\end{minipage}
	\hfill
	\begin{minipage}[t]{0.45\textwidth}
	\centering
		\includegraphics[scale=1]{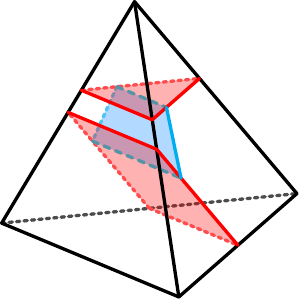}
	\subcaption{After:
	The ``location'' of the exchange is tracked by a $0$-weight disc (blue).}
	\label{subfig:exchangeSurfaceZeroWeightDisc}
	\end{minipage}
\caption{Example of a regular exchange inside a single tetrahedron.}
\label{fig:regExchange}
\end{figure}

More generally, we can speak of exchange annuli without any reference to a sum of surfaces.
In detail, let $S$ be a normal surface in a $3$-manifold triangulation $\tri$.
An annulus $A$ embedded in $\tri$ is an \textbf{exchange annulus} for $S$ if it satisfies the following conditions:
\begin{itemize}
\item
$A\cap S = \partial A$.
\item
$A$ has an orientable regular neighbourhood $N(A)$ in $\tri$.
\item
For every tetrahedron $\Delta$ of $\tri$, each component of $\Delta\cap A$ is a $0$-weight disc $D$ spanning two distinct elementary discs $E_1$ and $E_2$ in $\Delta$, such that:
	\begin{itemize}
	\item
	$\partial D = D\cap (\partial\Delta\cup E_1\cup E_2)$; and
	\item
	for each $i\in\{1,2\}$, $D\cap E_i$ is an arc whose endpoints lie in the interiors of two distinct triangular faces of $\Delta$.
	\end{itemize}
\end{itemize}

With all this in mind, we can finally state some different characterisations of vertex normal surfaces.
A normal surface $S$ is standard vertex if and only if multiples of $S$ are the only choices of surfaces $X$ and $Y$ that satisfy an equation of the form $kS = X+Y$.
Similarly, $S$ is quad vertex if and only if multiples of $S$ are the only choices of canonical surfaces $X$ and $Y$ that satisfy
an equation of the form $kS+L = X+Y$, where $L$ is a (possibly empty) disjoint union of vertex links.

Jaco and Tollefson~\cite{jaco1995algorithms} showed that, at least for normal $2$-spheres,
the characterisation in terms of sums can be used to derive a characterisation involving exchange annuli.
To define the necessary terminology, let $A$ be an exchange annulus for a surface $S$.
A \textbf{patch} relative to $A$ is a connected subsurface $P$ of $S$ such that $\partial P = P\cap\partial A$.
In other words, a patch minus its boundary is one of the components of $S-(A\cap S)$.
A \textbf{normal isotopy of a patch} $P$ is a sequence of compatible normal isotopies of the elementary discs intersecting $P$.
Two patches $P$ and $P'$ relative to $A$ are \textbf{normally isotopic along} $A$ if there is
a normal isotopy of $P$ that carries $A$ to itself and carries $P$ to $P'$.

Consider an exchange annulus $A$ for a normal $2$-sphere $S$.
The two components of $\partial A$ bound two disjoint discs in $S$.
If these two discs are normally isotopic along $A$, then we say that $A$ is a \textbf{trivial} exchange annulus for $S$. We have the following statement.

\begin{theorem}[{\cite[Theorem~4.1]{jaco1995algorithms}}]\label{thm:jacoTollefsonExchange}
A normal $2$-sphere $S$ is a standard vertex normal surface if and only if every exchange annulus for $S$ is trivial.
\end{theorem}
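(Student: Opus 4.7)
The plan is to show that standard vertex $2$-spheres are precisely those admitting no non-trivial additive ``splitting''. Recall $S$ is standard vertex if and only if every equation $\mathbf{v}(S) = \mathbf{v}(X) + \mathbf{v}(Y)$ with $X, Y$ normal surfaces forces $X$ and $Y$ to be multiples of $S$. I will establish the theorem by showing that non-trivial exchange annuli for $S$ correspond precisely to such non-trivial additive decompositions of $\mathbf{v}(S)$. The $2$-sphere hypothesis is crucial because each simple closed curve on $S$ bounds a disk, which lets $\partial A$ cut $S$ into tractable regions.

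For the forward direction, I argue by contrapositive. Assuming $A$ is a non-trivial exchange annulus for $S$, I will build $X$ and $Y$ as follows. Since $S \cong S^2$, the two circles comprising $\partial A$ cut $S$ into two disks $D_1, D_2$ and a middle annular region $P$. I take parallel copies $A^+, A^-$ of $A$ on either side of its regular neighbourhood $N(A)$; locally, near each $0$-weight disc of $A$ spanning two elementary discs $E_1, E_2$, the copies $A^\pm$ encode the ``swap'' to the alternate pair $E_1', E_2'$ satisfying $\mathbf{v}(E_1') + \mathbf{v}(E_2') = \mathbf{v}(E_1) + \mathbf{v}(E_2)$. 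Setting $X := D_1 \cup A^+ \cup D_2$ and $Y := P \cup A^-$ gives normal surfaces, and because $A^\pm$ contribute $0$ to standard coordinates, $\mathbf{v}(X) + \mathbf{v}(Y) = \mathbf{v}(S)$. The non-triviality of $A$ -- i.e., $D_1$ and $D_2$ not being normally isotopic along $A$ -- prevents $\mathbf{v}(X)$ or $\mathbf{v}(Y)$ from equalling $0$ or $\mathbf{v}(S)$, so neither is a multiple of $S$, showing $S$ is not vertex.

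For the converse, again by contrapositive: assuming $\mathbf{v}(S) = \mathbf{v}(X) + \mathbf{v}(Y)$ is a non-trivial decomposition, the topological realisation of the sum via regular exchanges along the circles of $X \cap Y$ packages these exchanges into exchange annuli for $S$. I will show at least one such annulus is non-trivial: if every one were trivial, then each regular exchange could be undone by a normal isotopy of patches along the annulus, and one of $X, Y$ would end up normally isotopic to a multiple of $S$, contradicting the hypothesis.

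The main obstacle will be, in the forward direction, verifying that the candidate surfaces $X$ and $Y$ built from $D_1, D_2, P$ and the parallel copies $A^\pm$ are genuine normal surfaces -- properly embedded, with elementary discs satisfying the quad constraints -- and that non-triviality of $A$ actually forces $\mathbf{v}(X), \mathbf{v}(Y)$ away from $0$ and $\mathbf{v}(S)$. This will require a tetrahedron-by-tetrahedron analysis of how $A^\pm$ re-pairs the elementary discs of $S$, which is where the local formalism of regular exchanges from \cite{jaco1995algorithms} does the essential work.
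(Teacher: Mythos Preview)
First, note that the paper does not supply its own proof of this statement: it is quoted verbatim from Jaco--Tollefson~\cite{jaco1995algorithms} as background, so there is no ``paper's proof'' to compare against. Your plan does follow the same broad architecture as the original Jaco--Tollefson argument (nontrivial exchange annuli correspond to nontrivial Haken-sum decompositions, and vice versa), so the strategy is sound.

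That said, there is a genuine gap in your forward-direction construction. You set $X = D_1 \cup A^+ \cup D_2$ and $Y = P \cup A^-$ and assert that these are normal surfaces whose standard coordinates sum to $\mathbf{v}(S)$. They are not. Locally, inside a tetrahedron $\Delta$ where $A$ meets $S$ in a $0$-weight disc spanning elementary discs $E_1,E_2$, your ``swap to an alternate pair $E_1',E_2'$ with $\mathbf{v}(E_1')+\mathbf{v}(E_2')=\mathbf{v}(E_1)+\mathbf{v}(E_2)$'' need not exist: if $E_1$ and $E_2$ are two parallel triangles of the same type, no other pair of elementary disc types has the same coordinate sum. What your cut-and-paste actually produces in $\Delta$ is a pair of discs each built from half of $E_1$, half of $E_2$, and a copy of the $0$-weight disc---and such a disc is typically a hexagon or worse, not a triangle or quad. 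So $X$ and $Y$ are embedded surfaces, but not normal ones, and $\mathbf{v}(X),\mathbf{v}(Y)$ are not defined.

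The fix used in~\cite{jaco1995algorithms} is to work with $2S$ rather than $S$. With two parallel copies $S',S''$ of $S$, one places (two parallel copies of) $A$ so that the cut-and-paste recombines whole elementary discs of $S'$ and $S''$ rather than halves of discs from a single copy; the outputs $F_1,F_2$ are then genuinely normal with $F_1+F_2=2S$, and nontriviality of $A$ (i.e.\ $D_1,D_2$ not normally isotopic along $A$) is exactly what prevents $F_1,F_2$ from being normally isotopic to $S$. Your converse sketch is closer to the mark, though the step ``if every exchange annulus were trivial then one of $X,Y$ is a multiple of $S$'' is where most of the technical work in~\cite{jaco1995algorithms} actually lives, and it requires an inductive reduction on the number of intersection curves rather than a single observation.
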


\subsection{Crushing along normal surfaces}\label{ssec:intermediateCell}

In this section we go over the procedure of {\em crushing} a normal surface inside a triangulation of a $3$-manifold.
This procedure, which is instrumental for $3$-sphere recognition, as well as many other algorithms
in low-dimensional topology, was first described by Jaco and Rubinstein in~\cite{jaco2003efficient}, following earlier unpublished ideas of Casson.
The method has since been simplified, see~\cite{burton2014crushing}, and implemented in {\em Regina}~\cite{Regina} by Burton.

We begin with some terminology (partly based on terminology from~{\cite[p.~91]{jaco2003efficient}}) that will be helpful for describing both the combinatorial and topological effects of crushing:

\begin{definition}\label{defs:inducedCells}
Let $S$ be a normal surface in a triangulation $\tri$.
The surface $S$ divides each tetrahedron $\Delta$ of $\tri$ into a collection of \textbf{induced cells} of the following types:
\begin{itemize}
\item \textbf{Parallel cells} of two types (see Figure~\ref{fig:parallelCells}):
	\begin{itemize}
	\item \textbf{Parallel triangular cells}:
	These lie between two parallel triangles of $S$.
	\item \textbf{Parallel quad cells}:
	These lie between two parallel quads of $S$.
	\end{itemize}
\item \textbf{Non-parallel cells} of nine types:
	\begin{itemize}
	\item \textbf{Corner cells}: These are tetrahedra that lie between a single triangle of $S$ and a single vertex of $\Delta$.
	\item \textbf{Wedge cells} of three types (see Figure~\ref{fig:wedgeCells}):
	These only occur when $S$ meets $\Delta$ in one or more quads.
	In this case, if we ignore any parallel and corner cells in $\Delta$, the two cells left over are the wedge cells.
	\item \textbf{Central cells} of five types (see Figure~\ref{fig:centralCells}):
	These only occur when $S$ does not meet $\Delta$ in any quads.
	In this case, if we ignore any parallel and corner cells in $\Delta$, the single cell left over is the central cell.
	\end{itemize}
\end{itemize}
Moreover, the faces of these induced cells come in the following types:
\begin{itemize}
\item \textbf{Normal faces}:
These are triangular and quadrilateral faces that come from the normal surface $S$.
\item \textbf{Parallel faces}:
These are quadrilateral faces that lie in the triangles of $\tri$ between two parallel normal arcs of $S$.
Note that parallel faces only appear in parallel and wedge cells (see \Cref{fig:parallelCells,fig:wedgeCells}).
\item \textbf{Corner faces}:
These are triangular faces that lie between a single normal arc of $S$ and a single vertex of $\tri$.
\item \textbf{Central faces}:
There is exactly one central face corresponding to each triangle $f$ of $\tri$, and it is
precisely the face that remains if we ignore all the parallel and corner faces in $f$.
A central face has three to six sides, depending on the number of normal arc types in $f$ (see \Cref{fig:wedgeCells,fig:centralCells}).
\end{itemize}
Let $\manifold$ denote the underlying $3$-manifold of $\tri$,
and let $\manifold^\dagger$ denote the $3$-manifold obtained from $\manifold$ by cutting along $S$.
The induced cells naturally yield a cell decomposition $\mathcal{D}$ of $\manifold$.
Moreover, ungluing the normal faces of $\mathcal{D}$ yields a cell decomposition $\mathcal{D}^\dagger$ of $\manifold^\dagger$.
We say that the cell decompositions $\mathcal{D}$ and $\mathcal{D}^\dagger$, and any cell decompositions given by
components of $\mathcal{D}$ and $\mathcal{D}^\dagger$, are \textbf{induced} by the normal surface $S$.
\end{definition}

\begin{figure}[htbp]
\centering
	\begin{minipage}[t]{0.49\textwidth}
	\centering
		\includegraphics[scale=0.7]{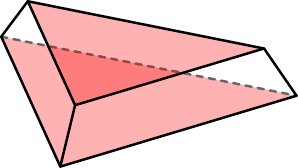}
	\subcaption{A parallel triangular cell.}
	\label{subfig:parallelTri}
	\end{minipage}
	\hfill
	\begin{minipage}[t]{0.49\textwidth}
	\centering
		\includegraphics[scale=0.7]{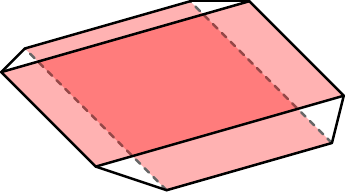}
	\subcaption{A parallel quad cell.}
	\label{subfig:parallelQuad}
	\end{minipage}
	\hfill
\caption{A normal surface $S$ can induce two types of parallel cells. The normal faces are shaded red.}
\label{fig:parallelCells}
\end{figure}

\begin{figure}[htbp]
\centering
	\begin{minipage}[t]{0.3\textwidth}
	\centering
		\includegraphics[scale=0.7]{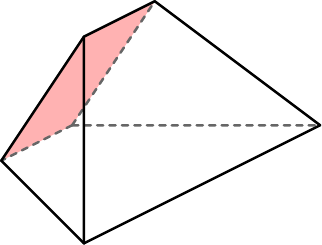}
	\subcaption{A wedge cell with no parallel faces.}
	\label{subfig:wedge0}
	\end{minipage}
	\hfill
	\begin{minipage}[t]{0.3\textwidth}
	\centering
		\includegraphics[scale=0.7]{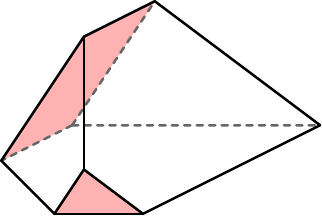}
	\subcaption{A wedge cell with one parallel face.}
	\label{subfig:wedge1}
	\end{minipage}
	\hfill
	\begin{minipage}[t]{0.3\textwidth}
	\centering
		\includegraphics[scale=0.7]{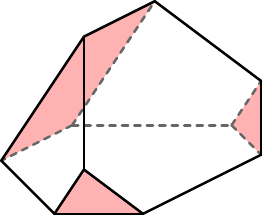}
	\subcaption{A wedge cell with two parallel faces.}
	\label{subfig:wedge2}
	\end{minipage}
\caption{A normal surface $S$ can induce three types of wedge cells. The normal faces are shaded red.}
\label{fig:wedgeCells}
\end{figure}

\begin{figure}[htbp]
\centering
	\begin{tikzpicture}

	\node at (0,0) {
		\includegraphics[scale=0.55]{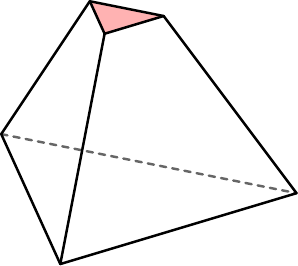}
	};
	\node at (3,0) {
		\includegraphics[scale=0.55]{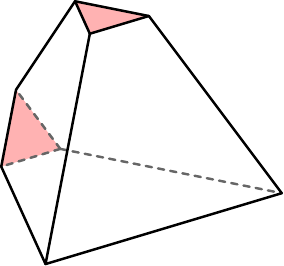}
	};
	\node at (6,0) {
		\includegraphics[scale=0.55]{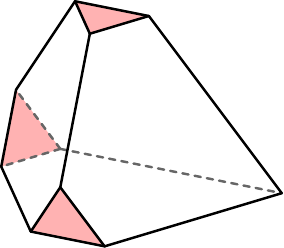}
	};
	\node at (9,0) {
		\includegraphics[scale=0.55]{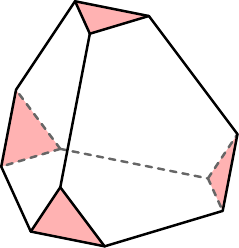}
	};

	\end{tikzpicture}
\caption{A normal surface $S$ can induce five types of central cell:
either a tetrahedron, or one of the four non-tetrahedron cells shown here.
The normal faces are shaded red.}
\label{fig:centralCells}
\end{figure}

With this terminology in mind, we can now summarise the procedure for crushing a normal surface $S$.
We first cut along $S$ and then shrink each \textbf{remnant} of $S$ (i.e., each component of the surface given by the union of all the normal faces after cutting) to a point.
This yields a cell decomposition whose cells are obtained from the induced cells by shrinking each normal face to a point.
Then, to complete the crushing procedure, we collapse non-tetrahedron cells until we recover a (possibly empty, possibly disconnected) triangulation $\tri'$.
We will discuss how crushing changes the topology of $\tri$ shortly, but first we present a precise definition of crushing:

\begin{definition}[The crushing procedure~{\cite[Definition~1]{burton2014crushing}}]\label{defs:stages}
Let $S$ be a normal surface in a triangulation $\tri$.
Each of the following operations builds on the previous one:
\begin{enumerate}
    \item Cut along $S$, and let $\mathcal{D}$ denote the resulting induced cell decomposition.
    \item Using the quotient topology, collapse each remnant of $S$ to a point.
    This turns $\mathcal{D}$ into a new cell decomposition $\mathcal{D}'$
    with $3$-cells of the following four possible types
    (see Figure~\ref{fig:destructibleCellFlatten}):
    \begin{itemize}
        \item \textbf{3-sided footballs}, which are obtained from
        corner cells and parallel triangular cells;
        \item \textbf{4-sided footballs}, which are obtained from parallel quad cells;
        \item \textbf{triangular purses}, which are obtained from wedge cells; and
        \item \textbf{tetrahedra}, which are obtained from central cells.
    \end{itemize}
    We say that $\mathcal{D}'$ is obtained by \textbf{non-destructively crushing}~$S$.
    Also, if a cell decomposition $\mathcal{D}^\ast$ is built entirely from $3$-cells of the four types listed above
    (even if it was not directly obtained by non-destructive crushing),
    then we call $\mathcal{D}^\ast$ a \textbf{destructible} cell decomposition.
    \item\label{step:flatten} To recover a triangulation from a destructible cell decomposition $\mathcal{D}^\ast$,
    we first build an intermediate cell complex $\mathcal{C}^\ast$ by using the quotient topology to flatten:
    \begin{itemize}
        \item all 3-sided and 4-sided footballs to edges; and
        \item all triangular purses to triangular faces.
    \end{itemize}
    This is illustrated in Figure~\ref{fig:destructibleCellFlatten}.
    Since triangulations are defined only by face gluings between tetrahedra,
    we \textbf{extract} a triangulation $\tri^\ast$ from $\mathcal{C}^\ast$ by:
    \begin{itemize}
        \item deleting all \textbf{isolated} edges and triangles that do not belong to any tetrahedra; and
        \item separating pieces of the cell complex that are only joined together along
        \textbf{pinched} edges.
    \end{itemize}
    This is illustrated in Figure~\ref{fig:extractTri}.
    We say that $\tri^\ast$ is obtained by \textbf{flattening} $\mathcal{D}^\ast$.
    Consider the triangulation $\tri'$ obtained by flattening
    the cell decomposition $\mathcal{D}'$ that results from non-destructively crushing $S$;
    we say that $\tri'$ is obtained by \textbf{(destructively) crushing}~$S$.
\qedhere
\end{enumerate}
\end{definition}

\begin{figure}[htb]
\centering
	\begin{minipage}[t]{0.25\textwidth}
	\centering
		\begin{tikzpicture}

		\node[inner sep=0pt] (football) at (0,0) {
			\includegraphics[scale=0.45]{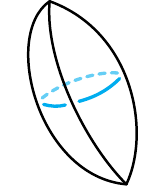}
		};
		\node[inner sep=0pt] (edge) at (2,0) {
			\includegraphics[scale=0.45]{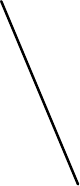}
		};
		\draw[thick, line cap=round, -Stealth] ($(football.east)+(-0.07,0)$) -- ($(edge.west)+(0.25,0)$);

		\end{tikzpicture}
	\subcaption{Flattening a $3$-sided football to an edge.}
	\label{subfig:football3}
	\end{minipage}
	\hfill
	\begin{minipage}[t]{0.25\textwidth}
	\centering
		\begin{tikzpicture}

		\node[inner sep=0pt] (football) at (0,0) {
			\includegraphics[scale=0.45]{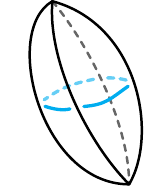}
		};
		\node[inner sep=0pt] (edge) at (2,0) {
			\includegraphics[scale=0.45]{CrushFigures/FlattenFootball.pdf}
		};
		\draw[thick, line cap=round, -Stealth] ($(football.east)+(-0.07,0)$) -- ($(edge.west)+(0.25,0)$);

		\end{tikzpicture}
	\subcaption{Flattening a $4$-sided football to an edge.}
	\label{subfig:football4}
	\end{minipage}
	\hfill
	\begin{minipage}[t]{0.45\textwidth}
	\centering
		\begin{tikzpicture}

		\node[inner sep=0pt] (purse) at (0,0) {
			\includegraphics[scale=0.4]{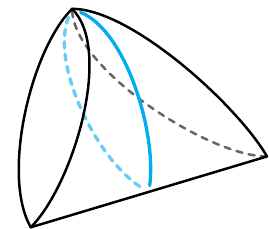}
		};
		\node[inner sep=0pt] (face) at (3.25,0) {
			\includegraphics[scale=0.4]{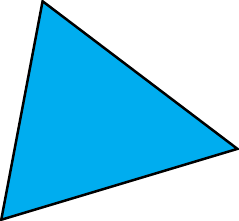}
		};
		\draw[thick, line cap=round, -Stealth] ($(purse.east)+(-0.12,0)$) -- ($(face.west)+(0.12,0)$);

		\end{tikzpicture}
	\subcaption{Flattening a triangular purse to a triangular face.}
	\label{subfig:triPurse}
	\end{minipage}
\caption{In addition to tetrahedra, a destructible cell decomposition $\mathcal{D}^\ast$ can contain three other types of $3$-cells.
To recover a triangulation from $\mathcal{D}^\ast$, we need to flatten the non-tetrahedron cells.}
\label{fig:destructibleCellFlatten}
\end{figure}

\begin{figure}[htb]
\centering
	\begin{tikzpicture}

	\node (before) at (0,0) {
		\includegraphics[scale=0.7]{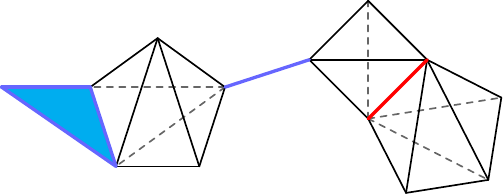}
	};
	\node (after) at (7,0) {
		\includegraphics[scale=0.7]{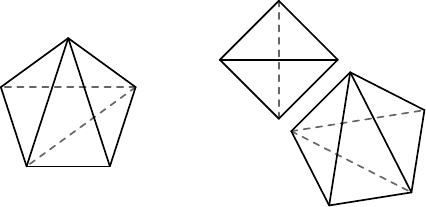}
	};
	\draw[thick, line cap=round, -Stealth] ($(before.east)+(0.05,0)$) -- ($(after.west)+(0.1,0)$);

	\end{tikzpicture}
\caption{Extracting a triangulation by deleting isolated edges and triangles (highlighted in blue), and separating pinched edges (highlighted in red).}
\label{fig:extractTri}
\end{figure}

An important feature of crushing is that whenever we crush a nontrivial normal surface in a triangulation $\tri$, we obtain a new triangulation $\tri^\ast$ with $|\tri^\ast| < |\tri|$.
This follows almost immediately from the definitions:
the tetrahedra in $\tri^\ast$ correspond to central cells in the induced cell decomposition,
and there is exactly one central cell for each tetrahedron of $\tri$ that contains no quads (and no central cell for all other tetrahedra).
Therefore, if we crush a surface with at least one quad, then any tetrahedron containing such a quad will not ``survive'' the crushing procedure.

Of course, this is not helpful unless we know how the topology of $\tri^\ast$ relates to the topology of $\tri$.
\emph{Non-destructively} crushing a normal $2$-sphere $S$ is straightforward to understand:
after cutting along $S$, shrinking the remnants is equivalent to capping off with $3$-balls.
In particular, if $S$ is a separating $2$-sphere, then we are simply decomposing the ambient $3$-manifold as a connected sum of two other $3$-manifolds.
However, understanding the topological effect of the flattening step requires a little more work.
The following lemma provides a helpful framework for studying the flattening step:

\begin{lemma}[Crushing lemma~{\cite[Lemma~3]{burton2014crushing}}]\label{lem:benCrushing}
Let $\tri^\ast$ be the triangulation given by flattening some destructible cell decomposition $\mathcal{D}^\ast$.
Then $\tri^\ast$ can be obtained from $\mathcal{D}^\ast$ by performing a sequence of
zero or more of the following \textbf{atomic moves} (see Figure~\ref{fig:atomicMoves}), one at a time, in some order:
\begin{itemize}
    \item flattening a \textbf{triangular pillow} to a triangular face;
    \item flattening a \textbf{bigon pillow} to a bigon face; and
    \item flattening a bigon face to an edge.
\end{itemize}
Since our cell decompositions are defined only by face gluings between $3$-cells,
after each atomic move we implicitly \textbf{extract} a cell decomposition by:
\begin{itemize}
    \item deleting all \textbf{isolated} vertices, edges, bigons and triangles that do not belong to any $3$-cells; and
    \item separating pieces of the cell complex that are only joined together along
    \textbf{pinched} edges.
\end{itemize}
\end{lemma}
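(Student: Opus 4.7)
The plan is case-analysis on the three non-tetrahedron cell types appearing in $\mathcal{D}^\ast$. Since tetrahedra survive the flattening step untouched, it suffices to exhibit, for each of 3-sided footballs, 4-sided footballs, and triangular purses, an explicit sequence of atomic moves that realises the prescribed flattening of that cell, and then to argue that these per-cell sequences can be concatenated over all non-tetrahedron cells of $\mathcal{D}^\ast$ to yield $\tri^\ast$.

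For the cell-by-cell reductions, I would unpack the boundary combinatorics in each case. A 3-sided football has two ``pole'' vertices joined by three meridional edges bounding three bigon faces. Flattening any one of these bigon faces (an atomic move) identifies two of the three meridional edges, so the remaining two bigon faces share their entire boundary and together constitute a bigon pillow. Flattening that bigon pillow, then flattening the single resulting bigon face, collapses the football to a single edge, as demanded in \Cref{defs:stages}. For a 4-sided football, flattening any one of its four bigon faces first reduces it to a 3-sided football, after which the previous sequence applies. For a triangular purse -- whose boundary consists of two bigon faces opposite to two triangular faces -- I would flatten both bigon faces to edges; this identifies the two triangular faces along their entire common boundary, producing a triangular pillow, which is then flattened to a triangular face via the first atomic move.

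The next step is to show that these per-cell recipes can be interleaved freely across the non-tetrahedron cells of $\mathcal{D}^\ast$, with the cumulative effect -- together with the extraction operations of deleting isolated sub-faces and separating pinched edges -- reproducing the triangulation $\tri^\ast$ obtained from the global flattening. The justification is that each atomic move is strictly local to the 3-cell (or 2-cell) being operated upon, modifying only its immediate neighbours in a predictable way; hence the order of the moves has no bearing on the final combinatorial outcome, and the extraction rules can be invoked either after each move or once at the end without ambiguity. An induction on the number of non-tetrahedron 3-cells of $\mathcal{D}^\ast$ then closes the argument.

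The main obstacle is the bookkeeping for the triangular purse, where the two bigon flattenings are required to identify opposite pairs of edges originating from distinct normal arcs of the underlying wedge cell. Care is needed to verify that, after these two flattenings, the remaining triangular faces really do share their full boundary -- not merely two of three edges -- so that the final triangular pillow flattening is genuinely legitimate and reproduces exactly the triangular face contributed by the purse to $\tri^\ast$. Once this is confirmed, the remaining verifications for the two football types are essentially symmetric and present no new difficulty.
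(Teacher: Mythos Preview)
The paper does not prove this lemma itself; it is quoted from Burton~\cite{burton2014crushing}. The only information the paper gives about the proof is the remark immediately following the lemma: Burton's argument requires being \emph{careful about the order} of the atomic moves, and along the way one must allow a seventh cell type, the \textbf{bigon pyramid}, to appear. Your proposal matches Burton's approach in spirit---reduce each non-tetrahedron cell via atomic moves---but it glosses over exactly the point that the paper flags as nontrivial.

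Concretely, the gap is in your concatenation step. An atomic move of the form ``flatten a bigon face'' does not act on a single $3$-cell: every bigon face is shared between two $3$-cells of $\mathcal{D}^\ast$, and flattening it transforms \emph{both} simultaneously. So your per-cell recipes overlap, and the sentence ``each atomic move is strictly local to the $3$-cell \ldots\ being operated upon'' is not correct. For instance, flattening one bigon face of a triangular purse turns that purse into a bigon pyramid (not one of the four destructible cell types), while simultaneously mutating whatever cell was glued on the other side. This is precisely why Burton needs a specific ordering and why the bigon pyramid enters the analysis---a cell type your proposal never mentions.

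Relatedly, your proposed induction on the number of non-tetrahedron $3$-cells is not well-founded as stated: flattening a bigon face does not delete any $3$-cell, it only changes cell types, so this count need not decrease at each step. A workable induction has to track something finer (e.g., the total number of bigon faces, or follow Burton's explicit ordering), and must verify that the intermediate complexes remain well-behaved enough that the next atomic move is still available. Once that bookkeeping is done, your cell-by-cell pictures of how footballs and purses collapse are fine; but the global argument that stitches them together is where the actual content lies, and that is what your proposal currently asserts rather than proves.
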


\begin{figure}[htbp]
\centering
	\begin{minipage}[t]{0.4\textwidth}
	\centering
		\begin{tikzpicture}

		\node[inner sep=0pt] (pillow) at (0,0) {
			\includegraphics[scale=0.45]{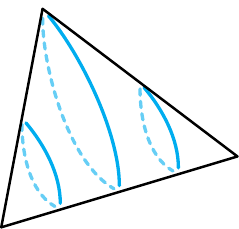}
		};
		\node[inner sep=0pt] (face) at (3.4,0) {
			\includegraphics[scale=0.45]{CrushFigures/FlattenTriPurse.pdf}
		};
		\draw[thick, line cap=round, -Stealth] ($(pillow.east)+(-0.32,0)$) -- ($(face.west)+(0.12,0)$);

		\end{tikzpicture}
	\subcaption{Flattening a triangular pillow.}
	\label{subfig:flattenTriPillow}
	\end{minipage}
	\hfill
	\begin{minipage}[t]{0.3\textwidth}
	\centering
		\begin{tikzpicture}

		\node[inner sep=0pt] (pillow) at (0,0) {
			\includegraphics[scale=0.45]{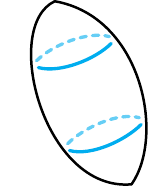}
		};
		\node[inner sep=0pt] (face) at (2.6,0) {
			\includegraphics[scale=0.45]{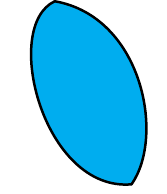}
		};
		\draw[thick, line cap=round, -Stealth] ($(pillow.east)+(-0.07,0)$) -- ($(face.west)+(0.25,0)$);

		\end{tikzpicture}
	\subcaption{Flattening a bigon pillow.}
	\label{subfig:flattenBigonPillow}
	\end{minipage}
	\hfill
	\begin{minipage}[t]{0.25\textwidth}
	\centering
		\begin{tikzpicture}

		\node[inner sep=0pt] (face) at (0,0) {
			\includegraphics[scale=0.45]{CrushFigures/FlattenBigon.pdf}
		};
		\node[inner sep=0pt] (edge) at (2,0) {
			\includegraphics[scale=0.45]{CrushFigures/FlattenFootball.pdf}
		};
		\draw[thick, line cap=round, -Stealth] ($(face.east)+(-0.07,0)$) -- ($(edge.west)+(0.25,0)$);

		\end{tikzpicture}
	\subcaption{Flattening a bigon face.}
	\label{subfig:flattenBigonFace}
	\end{minipage}
\caption{The three atomic moves for flattening a destructible cell decomposition.}
\label{fig:atomicMoves}
\end{figure}

As part of the proof of the crushing lemma, Burton showed in~\cite{burton2014crushing} that
if we are careful about the order in which we perform the atomic moves,
then we only ever encounter cell decompositions with $3$-cells of the following seven types:
3-sided footballs, 4-sided footballs, triangular purses, tetrahedra,
triangular pillows, bigon pillows, and \textbf{bigon pyramids} (see \Cref{fig:bigonPyramid}).

\begin{figure}[htbp]
\centering
	\includegraphics[scale=0.7]{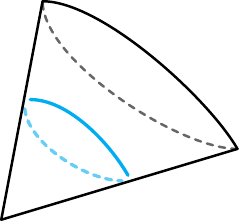}
\caption{A bigon pyramid.}
\label{fig:bigonPyramid}
\end{figure}

As suggested earlier, the crushing lemma often helps to overcome one of the key challenges in applications of crushing:
understanding how the flattening step changes the topology of the underlying $3$-manifold.
For instance, the crushing lemma was used in~\cite{burton2014crushing} to show that when the crushed surface is a $2$-sphere or disc, the changes in topology are mild, and can be easily detected.

\section{Existence of suitable quad vertex normal spheres}\label{sec:existenceSpheres}

In this section we prove that, in any edge-ideal triangulation $(\tri,\ell)$ of a composite knot, we can always find a normal $2$-sphere that we can use to either reduce the size of our input, or to witness a (possibly trivial) decomposition of the knot.
Crucially, we can ensure that these $2$-spheres are quad vertex, which allows us to:
\emph{(a)} enumerate candidate $2$-spheres efficiently, and \emph{(b)} control the effect of crushing such $2$-spheres.

Let $(\tri,\ell)$ be an edge-ideal triangulation of a knot $K$, as defined in \Cref{sec:intro}.
The ideal loop $\ell$ consists of a sequence of edges of $\tri$, which we denote by $\ell_1,\ldots,\ell_m$.
For any edge $e$ of $\tri$ and any normal surface $S$ in $\tri$, let $w_e(S)$ denote the
\textbf{edge-weight} of $S$ at $e$ -- i.e., the number of times $S$ intersects $e$.
Moreover, we define $w_\ell(S) = \sum_{i=1}^{m} w_{\ell_i}(S)$ -- i.e., the number of times $S$ intersects the loop $\ell$.
We have the following lemma:

\begin{restatable}{lemma}{lemNontrivialSphere}\label{lem:nontrivialS2}
Let $(\tri,\ell)$ be an edge-ideal triangulation of a knot $K$.
If $K$ is a composite knot, then $\tri$ contains a nontrivial normal $2$-sphere $S$ with $w_\ell(S)=2$.
\end{restatable}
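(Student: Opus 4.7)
The plan is to start from a topological decomposing $2$-sphere, normalise it while preserving its intersection with $\ell$, and then observe that the normalised sphere cannot be vertex-linking.

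Since $K$ is composite, write $K = K_1 \# K_2$ with both $K_i$ nontrivial; this gives an embedded $2$-sphere $\Sigma_0 \subset \mathbb{S}^3$ meeting $\ell$ transversely in exactly two points and splitting $(\mathbb{S}^3, \ell)$ into two nontrivial ball-arc pairs. After a small ambient isotopy supported away from $\ell$, we may place $\Sigma_0$ in general position with respect to $\tri$, preserving $w_\ell(\Sigma_0) = 2$. Let $\mathcal{F}$ be the family of $2$-spheres in general position with $\tri$ that realise a nontrivial connected-sum decomposition of $\ell$; choose $S \in \mathcal{F}$ minimising the pair $(w(S), c(S))$ lexicographically, where $w(S) = \sum_{e} w_e(S)$ is the total edge-weight and $c(S)$ counts the components of the intersection of $S$ with the $2$-skeleton of $\tri$.

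I claim $S$ is a normal surface with $w_\ell(S) = 2$. The weight claim is automatic: any $2$-sphere in $\mathbb{S}^3$ meets the closed loop $\ell$ in an even number of points, and $w_\ell(S) = 0$ is ruled out by the decomposing property. For normality, suppose $S$ fails to be normal. Then the standard normalisation calculus supplies a strictly simplifying move: either an isotopy pushing an innermost face-bigon across an edge, or a compression along a disk $D$ which we may take disjoint from $\ell$ after a small perturbation (since $\dim \ell = 1$ and $\dim D = 2$ in the ambient $3$-manifold). A compression splits $S$ into two spheres whose $\ell$-weights sum to $2$; by parity, one has weight $2$ and the other weight $0$. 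The weight-$2$ piece still belongs to $\mathcal{F}$: because $D$ sits inside one of the two balls bounded by $S$, the weight-$2$ sphere still separates $(\mathbb{S}^3, \ell)$ into two nontrivial ball-arc pairs witnessing the same decomposition $K_1 \# K_2$. This contradicts the minimality of $S$.

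Finally, $S$ must be nontrivial: otherwise $S$ consists entirely of normal triangles, forcing it to be the link of a single vertex $v$ of $\tri$; the condition $w_\ell(S) = 2$ then requires $v$ to lie on $\ell$, but in that case $S$ bounds a small ball around $v$ whose intersection with $\ell$ is the trivial arc formed by the two edges of $\ell$ incident to $v$, so $S$ cannot witness a nontrivial decomposition. The main obstacle I foresee is the compression step: ensuring the nontrivial-decomposition property survives when $S$ is split. This rests on the fact that the compression disk $D$ is disjoint from $\ell$, so the two intersection points of $\ell$ with $S$ must lie in the same subdisk of $S$ bounded by $\partial D$ (by parity), and the resulting weight-$2$ sphere inherits both knotted arcs of $\ell$ from $S$, one on each of its two sides.
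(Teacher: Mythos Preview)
Your overall strategy---pick a least-complexity decomposing sphere in $\mathcal{F}$ and argue it must already be normal---is essentially the paper's normalisation argument in different clothing, and your treatment of compressions and of nontriviality is fine. But you skip the one case that carries the actual content of the lemma: the edge-reducing bigon on an \emph{ideal} edge. You list ``an isotopy pushing an innermost face-bigon across an edge'' as a possible simplifying move and then never return to it. If the edge in question lies in $\ell$, that isotopy drops $w_\ell(S)$ from $2$ to $0$; the resulting sphere is disjoint from $\ell$, hence not in $\mathcal{F}$, and minimality over $\mathcal{F}$ gives you nothing. What you must argue is that this case cannot arise: such a bigon is a disc $D$ with $\partial D = \alpha \cup \beta$, where $\alpha \subset S$ and $\beta$ is precisely one of the two arcs of $\ell$ cut off by $S$, so $D$ trivialises that arc and contradicts $S \in \mathcal{F}$. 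The paper phrases this same obstruction as $\partial$-incompressibility of the essential annulus $S \setminus N(\ell)$ in the exterior. Either way, this is the step that makes the lemma true, and it is missing from your write-up.

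A smaller slip: your dimension count for perturbing the compression disc $D$ off $\ell$ is backwards. In a $3$-manifold a $1$-complex and a $2$-disc meet generically in isolated points, not in the empty set, so perturbation alone does not give $D \cap \ell = \varnothing$. The correct reason is that the compression discs supplied by normalisation are $0$-weight by construction---they miss the entire $1$-skeleton of $\tri$, and in particular $\ell$.
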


\begin{proof}
Let $\manifold$ denote the knot exterior of $\ell$ in $\tri$, given by $\tri-N(\ell)$.
Since $\ell$ forms a composite knot, there exists a topologically embedded $2$-sphere $\tilde{S}$ in $\tri$ intersecting $\ell$ twice, and separating $\ell$ into two nontrivial summands; this $2$-sphere corresponds to an essential annulus $\tilde{A}$ in $\manifold$.
Normalising $\tilde{S}$ in $\tri$ yields a normal surface $\hat{S}$ via a sequence of the following operations:
\begin{itemize}
\item A compression along a $0$-weight compression disc;
\item An isotopy across an edge decreasing its edge-weight; and%
\item A deletion of a $0$-weight component.
\end{itemize}
Thus, each component of $\hat{S}$ is a $2$-sphere. Moreover, note that $w_
\ell(\hat{S})=2$, because the only way the normalisation procedure can change this weight is to isotope the surface across an edge $\ell_i$,
which is impossible since $\tilde{A}$ does not admit an essential $\partial$-compression disc.
Finally, since $\tri$ is a $3$-sphere and hence cannot contain any non-separating $2$-spheres, we see that $\hat{S}$ must include a ($2$-sphere) component $S$ with $w_\ell(S)=2$, as required.
\end{proof}

As mentioned at the beginning of this section, we strengthen \Cref{lem:nontrivialS2} to a statement about the existence of \emph{quad vertex} normal $2$-spheres;
see \Cref{cor:quadVertexComposite} below.
We organise the proof of this into two parts, which we present as \Cref{lem:x+y,lem:quadVertex}.

\begin{lemma}\label{lem:x+y}
Let $S$ be a normal $2$-sphere in a triangulation of $\mathbb{S}^3$.
If $S$ is not quad vertex, then one of the following holds, for some nontrivial normal surfaces $X$ and $Y$ that are not normally isotopic to $S$:
\begin{itemize}
\item
$2S = X+Y$, where $X$ and $Y$ are $2$-spheres.
\item
$S = X+Y$, where (without loss of generality) $X$ is a $2$-sphere and $Y$ is a torus.
\item
$S+L = X+Y$, where $L$ is a trivial $2$-sphere, and where $X$ and $Y$ are $2$-spheres.
\end{itemize}
\end{lemma}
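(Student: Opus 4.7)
The plan is to invoke the quad-vertex characterisation from \Cref{ssec:vertexAndExchange} to obtain some decomposition $kS + L = X + Y$ witnessing that $S$ is not quad vertex, and then show by a minimality argument that one such decomposition must take one of the three claimed forms.

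Concretely, the characterisation yields canonical normal surfaces $X, Y$ (not both multiples of $S$), a disjoint union of vertex-linking spheres $L$, and a positive integer $k$ with $kS + L = X + Y$. (We must have $k \geq 1$: otherwise the quad-coordinate identity $\mathbf{q}(X) + \mathbf{q}(Y) = k\,\mathbf{q}(S) = 0$ forces $\mathbf{q}(X) = \mathbf{q}(Y) = 0$, so the canonical surfaces $X$ and $Y$ would both be empty, contradicting non-proportionality.) Among all such decompositions I would fix one minimising $k + |L|$. A short check then shows that in a minimal decomposition neither $X$ nor $Y$ is a multiple of $S$: if $X = mS$ then $Y = (k-m)S + L$, but $Y$ is canonical so $L = \emptyset$ and $Y = (k-m)S$, contradicting the non-proportionality of $(X, Y)$.

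Next, using that every component of $X + Y = kS + L$ is a $2$-sphere while $X$ and $Y$ are canonical, I would show each component of $X$ must intersect $Y$ (otherwise the isolated component would be a copy of $S$; removing it then yields a strictly smaller decomposition whose non-proportionality is inherited from the previous bullet, contradicting minimality). A slightly more delicate chunk-decomposition of the bipartite intersection graph of components of $X$ versus components of $Y$ then reduces, again by minimality, to the case where both $X$ and $Y$ are \emph{connected}.

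Once $X$ and $Y$ are connected closed surfaces in $\mathbb{S}^3$, they are orientable (since any closed surface in $\mathbb{S}^3$ is separating and hence two-sided), so I can write $\chi(X) = 2 - 2g(X)$ and $\chi(Y) = 2 - 2g(Y)$. The Euler characteristic identity
\[
\chi(X) + \chi(Y) \;=\; \chi(kS + L) \;=\; 2(k + |L|)
\]
rearranges to $g(X) + g(Y) = 2 - k - |L|$. Since $k \geq 1$ and $g(X), g(Y), |L|$ are all non-negative, the only possibilities are $(k, |L|) = (2, 0)$ with both $X, Y$ being $2$-spheres (Case~1); $(k, |L|) = (1, 1)$ with both $X, Y$ being $2$-spheres and $L$ a single vertex-linking (hence trivial) $2$-sphere in $\mathbb{S}^3$ (Case~3); and $(k, |L|) = (1, 0)$ with one of $X, Y$ a sphere and the other a torus (Case~2). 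A direct coordinate check in each case rules out $X$ or $Y$ being normally isotopic to $S$ (e.g.\ in Case~3, $X = S$ in coordinates would force $Y = L$, which is incompatible with $Y$ being canonical and $L \neq \emptyset$).

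The step I expect to be the main obstacle is the reduction to $X$ and $Y$ both connected. When the bipartite intersection graph is disconnected, a standard chunk decomposition lets me pass to a smaller sub-equation that still witnesses non-proportionality, contradicting minimality. But when the intersection graph is connected while $X$ (or $Y$) remains disconnected, replicating the splitting needs a more careful cut-and-paste argument using regular exchanges, and handling this case cleanly is the delicate part of the proof.
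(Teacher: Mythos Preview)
Your overall strategy matches the paper's: start from the quad-vertex characterisation $kS+L=X+Y$, reduce to $X$ and $Y$ connected, then finish with the Euler-characteristic count (which you carry out correctly). The genuine gap is exactly the one you flag: with your minimality criterion $k+|L|$, the reduction to connected $X$ and $Y$ does not go through. You correctly handle the case where a component $C$ of $X$ misses $Y$ (then $C=S$ and you can drop $k$ by one). But when every component of $X$ meets $Y$, your bipartite-graph idea does not suffice: the intersection graph can be connected while $X$ and $Y$ are both disconnected (e.g.\ a path $C_1$--$D_1$--$C_2$--$D_2$), and even when it is disconnected, splitting the equation along graph components need not produce a sub-equation in which \emph{both} sides fail to be multiples of $S$. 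Your minimality on $k+|L|$ gives no leverage here, because absorbing a component of $X$ into $Y$ need not produce any vertex-linking pieces and hence need not decrease $k+|L|$.

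The paper fixes this by choosing a different complexity to minimise: lexicographically minimise first $n$, then the number of components $i(X,Y)$ of $X\cap Y$ (this is the Jaco--Tollefson trick). With that ordering the connectedness step becomes a one-line dichotomy. If $X=C\sqcup X'$ is disconnected, then either $C\cap Y=\varnothing$, in which case $C=S$ and $(n-1)S+L=X'+Y$ contradicts minimality of $n$; or $C\cap Y\neq\varnothing$, in which case one \emph{absorbs} $C$ into $Y$ by writing $C+Y=Y'+L'$ with $Y'$ canonical and $L'$ a union of vertex links, yielding $nS+(L-L')=X'+Y'$ with $i(X',Y')=i(X,Y)-i(C,Y)<i(X,Y)$, again a contradiction. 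Once $X$ and $Y$ are connected, your Euler-characteristic analysis and your checks that $X,Y$ are nontrivial and not normally isotopic to $S$ are exactly what the paper does. So the missing idea is simply to replace $k+|L|$ by the lexicographic pair $(n,\,i(X,Y))$.
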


\begin{proof}
If $S$ is not quad vertex, then we can write $nS+L = X+Y$, where:
\begin{itemize}
\item
$L$ is a (possibly empty) disjoint union of vertex links; and
\item
$X$ and $Y$ are (possibly disconnected) canonical surfaces such that neither $X$ nor $Y$ is a multiple of $S$.
\end{itemize}
We first show that we can choose $X$ and $Y$ so that they are both connected.
Adapting an argument of Jaco and Tollefson \cite{jaco1995algorithms}, we show that it suffices to choose $X$ and $Y$ so that:
\begin{itemize}
\item
the coefficient $n$ is minimised; and
\item
if there is more than one choice with minimum $n$, then the number $i(X,Y)$ of components in $X\cap Y$ is minimised.
\end{itemize}

Fix an arbitrary component $C$ of $X$, and write $X = C \sqcup X'$.
Suppose for the sake of contradiction that $X$ is disconnected, and hence that $X'$ is nonempty.
We have two cases:
\begin{itemize}
\item
If $C \cap Y = \emptyset$, then the fact that $C$ is canonical means that $C = S$.
But then we can write $(n-1)S + L = X' + Y$, contradicting minimality of $n$.
\item
If $C \cap Y \neq \emptyset$, then consider the canonical surface $Y'$ such that $C + Y = Y' + L'$, where $L'$ is a disjoint union of vertex links.
Observe that $L'$ must be a subset of $L$, so we have $nS + (L - L') = X' + Y'$.
Moreover, by construction, we have $i(X',Y') = i(X',Y) = i(X,Y) - i(C,Y) < i(X,Y)$, contradicting minimality of $i(X,Y)$.
\end{itemize}
Thus, $X$ must be connected.
By the same argument, $Y$ must also be connected.

Since $nS+L$ is a disjoint union of $2$-spheres, and since $X$ and $Y$ are connected, we have $2n \leqslant \chi(nS+L) = \chi(X+Y) \leqslant 4$, and hence $n \leqslant 2$.
Assuming without loss of generality that $\chi(X) \geqslant \chi(Y)$, and using the fact that $\tri$ is a $3$-sphere, we therefore have only three possible cases:
\begin{itemize}
\item
$2S=X+Y$, where $\chi(X)=\chi(Y)=2$;
\item
$S=X+Y$, where $\chi(X)=2$ and $\chi(Y)=0$; or
\item
$S+L=X+Y$, where $\chi(X)=\chi(Y)=2$ and $L$ is a single $2$-sphere.
\qedhere
\end{itemize}
\end{proof}

\begin{restatable}{lemma}{lemQuadVertex}\label{lem:quadVertex}
Let $(\tri,\ell)$ be an edge-ideal triangulation of a knot.
Moreover, let $\Omega$ denote the set of all nontrivial normal $2$-spheres $S$ in $\tri$ such that either $w_\ell(S)=0$ or $w_\ell(S)=2$.
If $\Omega$ is nonempty, then there must be a quad vertex normal surface in $\Omega$.
\end{restatable}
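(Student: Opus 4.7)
The plan is to pick $S \in \Omega$ minimally in a suitable well-ordering, and derive a contradiction from the assumption that $S$ is not quad vertex by invoking \Cref{lem:x+y}. I would order $\Omega$ lexicographically by the pair $(w_\ell(S), \mathbf{q}(S))$, where the second coordinate is compared using a fixed lexicographic ordering of the entries of the quad coordinate vector. Nonemptiness of $\Omega$ guarantees that such a minimal $S$ exists.

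Suppose $S$ is not quad vertex. Then \Cref{lem:x+y} produces one of three decompositions, with $X$ and $Y$ canonical, nontrivial, and not scalar multiples of $S$; in particular $\mathbf{q}(X) \neq \mathbf{q}(S) \neq \mathbf{q}(Y)$. Two observations drive the rest of the argument. First, vertex-linking surfaces have zero quad coordinates, so in quad coordinates the decomposition becomes $\mathbf{q}(X) + \mathbf{q}(Y) = \mathbf{q}(S)$ in Cases 2 and 3, and $\mathbf{q}(X) + \mathbf{q}(Y) = 2\, \mathbf{q}(S)$ in Case 1. Second, every normal $2$-sphere in $\tri$ separates (since $\tri$ triangulates $\mathbb{S}^3$), so $w_\ell$ is even on normal $2$-spheres; moreover, the embeddedness of $\ell$ implies $w_\ell(L) \in \{0, 2\}$ for any vertex-linking $2$-sphere $L$, according to whether the corresponding vertex lies on $\ell$.

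Cases 2 and 3 of \Cref{lem:x+y} are the straightforward ones. In both, $\mathbf{q}(X) + \mathbf{q}(Y) = \mathbf{q}(S)$ with $\mathbf{q}(X), \mathbf{q}(Y)$ nonzero, so each quad vector is componentwise dominated by $\mathbf{q}(S)$ and strictly smaller at some coordinate, hence strictly precedes $\mathbf{q}(S)$ in lex order. A parity check of $w_\ell$ (using the second observation, together with the primary minimality of $w_\ell(S)$) then identifies a sphere summand in $\Omega$ whose $w_\ell$ is at most $w_\ell(S)$; this summand beats $S$ under our ordering, either strictly on the primary component, or by matching the primary and being strictly smaller on the secondary.

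The main obstacle is Case 1, where $\mathbf{q}(X) + \mathbf{q}(Y) = 2\, \mathbf{q}(S)$ and neither summand is componentwise dominated by $\mathbf{q}(S)$. Here I would use a midpoint argument: let $i$ be the smallest index at which $\mathbf{q}(X)$ and $\mathbf{q}(Y)$ differ. For $j < i$ both vectors agree with $\mathbf{q}(S)_j$, while at coordinate $i$ exactly one of them, say $\mathbf{q}(X)_i$, lies strictly below $\mathbf{q}(S)_i$, placing $\mathbf{q}(X)$ strictly before $\mathbf{q}(S)$ in lex order. To certify $X \in \Omega$, I check parities: since $w_\ell(X) + w_\ell(Y) = 2\, w_\ell(S)$ with even nonnegative summands, if $w_\ell(S) = 0$ then both summands are $0$, while if $w_\ell(S) = 2$ then having $w_\ell(X) \in \{0, 4\}$ would put a sphere summand in $\Omega$ at primary weight $0$, contradicting the primary minimality of $S$; thus $(w_\ell(X), w_\ell(Y)) = (2, 2)$. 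In either subcase $X$ lies in $\Omega$ at the same primary weight as $S$ but strictly precedes $\mathbf{q}(S)$ in lex order, the required contradiction.
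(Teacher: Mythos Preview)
Your proposal is correct and follows essentially the same route as the paper: both define the complexity $(w_\ell(S),\mathbf{q}(S))$ ordered lexicographically, pick a minimal $S\in\Omega$, and use \Cref{lem:x+y} to produce a smaller element in each of the three cases; your ``midpoint'' argument in Case~1 is exactly what the paper compresses into the phrase ``additivity tells us that $c(X)<c(S)$''. If anything, you are slightly more careful than the paper in noting that $w_\ell(L)\in\{0,2\}$ rather than assuming $w_\ell(L)=2$, and in invoking parity of $w_\ell$ on separating $2$-spheres; one cosmetic remark is that in Case~1 it is not true in general that neither summand is componentwise dominated by $\mathbf{q}(S)$, but your midpoint argument does not rely on this, so it causes no harm.
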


\begin{proof}
We start by defining a complexity on the normal surfaces of $\tri$ such that the
minimum complexity $2$-sphere in $\Omega$ must be the desired quad vertex normal surface. For this, let $F$ be a normal surface in $\tri$. The \textbf{complexity $c(F)$} of $F$ is the pair $\left( w_\ell(F), q(F) \right)$. For normal surfaces $F$ and $F'$ in $\tri$, we define the order $c(F) < c(F')$ if and only if either:
\begin{itemize}
\item $w_\ell(F) < w_\ell(F')$; or
\item $w_\ell(F) = w_\ell(F')$, and $q(F) < q(F')$ in the lexicographical order.
\end{itemize}
That is, we order $\left( w_\ell(F), q(F) \right)$ overall lexicographically.

Note that, by construction, if $F$ and $F'$ are both canonical, then $c(F) = c(F')$ if and only if $F$ is normally isotopic to $F'$.
Moreover, we have $c(F+F') = \left( w_\ell(F) + w_\ell(F'), q(F) + q(F') \right)$.

With this in mind, suppose we have $S\in\Omega$ that is not a quad vertex normal surface.
By \Cref{lem:x+y}, we only have three cases to consider, and in each case we find another normal $2$-sphere $X\in\Omega$ such that $c(X)<c(S)$:

\begin{description}
\item[Case 1: $2S=X+Y$, with $\chi(X)=\chi(Y)=2$.]\hfill\\
Assume without loss of generality that $c(X) \leqslant c(Y)$;
in particular, we have $w_\ell(X) \leqslant 2$, and hence $X \in \Omega$.
Since $X$ is not a multiple of $S$, we know in particular that $c(X) \neq c(S)$ which implies $c(X) < c(Y)$. Thus, additivity tells us that $c(X) < c(S)$.
\item[Case 2: $S=X+Y$, with $\chi(X)=2$ and $\chi(Y)=0$.]\hfill\\
Since $w_\ell(S) \leqslant 2$, it follows immediately that $w_\ell(X) \leqslant 2$, so we have $X \in \Omega$.
Now, additivity implies that $c(X) < c(S)$.
\item[Case 3: $S+L=X+Y$, with $\chi(X) = \chi(Y) = 2$.]\hfill\\
Assume without loss of generality that $c(X) \leqslant c(Y)$, and hence that $X \in \Omega$.
It remains to argue that $c(X) < c(S)$.
Since $q(S) = q(X) + q(Y)$, it is clear that $q(X) < q(S)$ in the lexicographical ordering.
Thus, all we need to show is that $w_\ell(X) \leqslant w_\ell(S)$:
	\begin{itemize}
	\item If $w_\ell(S) = 0$, then $w_\ell(X+Y) = w_\ell(S+L) = 2$.
	Since, by assumption, we have $w_\ell(X) \leqslant w_\ell(Y)$, the only possibility is that $w_\ell(X) = 0$.
	\item If $w_\ell(S) = 2$, then $w_\ell(X+Y) = w_\ell(S+L) = 4$.
	Since, by assumption, we have $w_\ell(X) \leqslant w_\ell(Y)$, we must have $w_\ell(X) \leqslant 2$.
	\end{itemize}
\end{description}

Altogether, if $S \in \Omega$ is not a quad vertex normal surface, then we have shown that we can find $X \in \Omega$ such that $c(X) < c(S)$.
Hence, if we choose $S \in \Omega$ minimising $c(S)$, then $S$ must be
a quad vertex normal surface.
\end{proof}

\begin{corollary}\label{cor:quadVertexComposite}
Let $(\tri,\ell)$ be an edge-ideal triangulation of a knot $K$.
If $K$ is composite, then $\tri$ contains a quad vertex $2$-sphere $S$ such that either $w_\ell(S)=0$ or $w_\ell(S)=2$.
\end{corollary}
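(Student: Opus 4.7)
The plan is to deduce the corollary as an immediate consequence of the two preceding lemmas. First I would invoke \Cref{lem:nontrivialS2}: since $K$ is composite, this lemma guarantees the existence of a nontrivial normal $2$-sphere $S_0$ in $\tri$ with $w_\ell(S_0)=2$. In particular, with $\Omega$ defined as in \Cref{lem:quadVertex} (the set of all nontrivial normal $2$-spheres $S$ in $\tri$ with $w_\ell(S)\in\{0,2\}$), the surface $S_0$ shows that $\Omega$ is nonempty.

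Then I would apply \Cref{lem:quadVertex} to conclude that $\Omega$ contains some quad vertex normal surface $S$. By construction, $S$ is a nontrivial normal $2$-sphere with $w_\ell(S)\in\{0,2\}$, which is exactly the statement we want.

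Since both ingredients have been established, there is no real obstacle here; the only thing to verify is that the hypotheses line up (an edge-ideal triangulation of a knot in both lemmas, the composite hypothesis used only to produce the initial $2$-sphere), which they do. The proof is therefore essentially a two-line combination of \Cref{lem:nontrivialS2,lem:quadVertex}.
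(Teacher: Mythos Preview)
Your proposal is correct and matches the paper's own proof essentially verbatim: the paper simply notes that $\Omega$ is nonempty by \Cref{lem:nontrivialS2} and then applies \Cref{lem:quadVertex}.
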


\begin{proof}
To apply \Cref{lem:quadVertex}, we simply note that $\Omega$ is nonempty, by \Cref{lem:nontrivialS2}.
\end{proof}

\begin{corollary}
Let $(\tri,\ell)$ be an edge-ideal triangulation of a knot $K$.
If $K$ is composite, then $\tri$ contains a standard vertex $2$-sphere $S$ such that either $w_\ell(S)=0$ or $w_\ell(S)=2$.
\end{corollary}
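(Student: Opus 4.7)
The plan is to deduce this corollary essentially for free from Corollary~\ref{cor:quadVertexComposite} together with a fact about the relationship between quad vertex and standard vertex surfaces that was already noted in Section~\ref{ssec:vertexAndExchange}. Corollary~\ref{cor:quadVertexComposite} produces a quad vertex normal $2$-sphere $S$ in $\tri$ with $w_\ell(S) \in \{0, 2\}$, and I would simply argue that this same $S$ is also standard vertex.

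The key ingredient is Burton's result (Lemma~4.5 of \cite{Burton09Converting}, cited in the paper), which says that every canonical quad vertex normal surface is also a standard vertex normal surface. To apply this, I need to verify that the $2$-sphere $S$ is canonical, i.e., contains no vertex-linking components. But $S$ is connected (it is a $2$-sphere) and nontrivial (it contains at least one quadrilateral disc, since it comes from the construction in Lemma~\ref{lem:nontrivialS2} via the composite structure of $K$), so $S$ cannot also contain a full vertex-linking $2$-sphere as a disjoint component. Hence $S$ is canonical, and Burton's result applies: $S$ is standard vertex, with the same edge weights along $\ell$ as before, so $w_\ell(S) \in \{0,2\}$ is preserved.

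There is essentially no obstacle here; the whole corollary is a one-line consequence of the previous corollary plus an earlier citation. The only subtlety I would flag in the write-up is the explicit check that $S$ is canonical — which is immediate from connectedness and nontriviality — so that Burton's correspondence between quad and standard vertex surfaces genuinely applies on the nose (rather than only after passing to some canonical representative whose edge weights would then need to be compared to those of $S$).
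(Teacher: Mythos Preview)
Your proposal is correct and follows essentially the same route as the paper: invoke Corollary~\ref{cor:quadVertexComposite} to obtain a quad vertex $2$-sphere $S$ with $w_\ell(S)\in\{0,2\}$, then apply Burton's \cite[Lemma~4.5]{Burton09Converting} to conclude that $S$ is also standard vertex. Your explicit check that $S$ is canonical (immediate from connectedness and nontriviality) is a nice point of rigor that the paper leaves implicit.
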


\begin{proof}
This follows from~\cite[Lemma 4.5]{Burton09Converting}:
any quad vertex surface is also \emph{standard} vertex.
\end{proof}

\section{Tracking ideal edges under crushing}\label{sec:identifyingIdealEdges}

Consider an edge-ideal triangulation $(\tri,\ell)$ and a quad vertex $2$-sphere $S$ intersecting $\ell$ twice.
It is known that crushing $S$ only mildly changes the topology of $\tri$~\cite{burton2014crushing}.
In this section, we use a new characterisation of quad vertex $2$-spheres to show that crushing $S$ also only mildly changes the ideal loop $\ell$.
Moreover, using work of Agol, Hass, and Thurston~\cite[Sections 4 and 6]{agol2006computational}, we can track how $\ell$ changes in time polynomial in $|\tri|$;
this is purely theoretical, and in practice a much simpler and more naive tracking method is efficient enough.

\subsection{Orbits of segments}\label{ssec:orbitSegment}

Let $S$ be a normal surface in an edge-ideal triangulation $(\tri,\ell)$.
We say that $S$ splits an edge of $\tri$ into {\bf segments}.
We call such a segment \textbf{ideal} if it is a subset of the ideal loop $\ell$.
A \textbf{surviving} segment is a segment that is incident to a central cell.
This terminology is motivated by how crushing $S$ affects the segments:
each surviving segment becomes an edge of the resulting triangulation, whereas each non-surviving segment either gets destroyed or becomes identified with one of the surviving segments.
In other words, for each surviving segment $\sigma$, there is a natural ``orbit'' of segments that all become identified with $\sigma$ after crushing.
Thus, to algorithmically determine how crushing $S$ changes the ideal loop, we need to figure out which surviving segments have an orbit that contains an ideal segment.

It turns out to be useful to define the orbit of a segment $\sigma$ to be not just the collection of segments that get identified with $\sigma$ after crushing $S$,
but the entire subcomplex consisting of all the induced cells that get flattened and identified with $\sigma$ after crushing.
More precisely, we define the \textbf{induced orbit} $o(\sigma)$ of a segment $\sigma$ to be the smallest subcomplex of induced cells that contains:
\emph{(a)} the segment $\sigma$; \emph{(b)} every corner or parallel face incident to a segment in $o(\sigma)$; and \emph{(c)} every corner or parallel $3$-cell incident to a segment in $o(\sigma)$.
One reason for defining induced orbits in this way is that the topology of these subcomplexes captures information about the topological effect of crushing $S$;
another reason is that, in \Cref{ssec:trivialInducedOrbit,ssec:characteriseQuadVertexSphere},
we can use the topology of the induced orbits to give a new characterisation of quad vertex $2$-spheres.
An induced orbit containing an ideal segment is called an \textbf{ideal orbit}, and is of particular interest to us.

Before we discuss induced orbits further, we introduce some more useful terminology.
A segment is said to be \textbf{type-$k$} if $S$ intersects exactly $k$ endpoints of the segment. Thus, an edge that is disjoint from $S$ forms a type-$0$ segment, and an edge that intersects $S$ in $w$ points is split into two type-$1$ segments and $(w-1)$ type-$2$ segments.
Note that all the segments in an induced orbit must have the same type, so we may sensibly refer to the entire orbit as type-$k$, for some $k$.

With this terminology in mind, recall that our goal is to algorithmically determine which surviving segments belong to an ideal orbit.
The most obvious way to do this is to explicitly trace through the induced orbits;
in practice, this na\"{i}ve approach works well, and so this is what our implementation actually does.
However, in theory, an induced orbit can be very large:
in the worst case, the total number of segments in a type-$2$ orbit is proportional to the size of the normal coordinates;
even if we assume $S$ is a vertex normal surface, this could still be exponential in $|\tri|$~\cite[Lemma~6.1]{hass1999computational}.

To give a more satisfactory theoretical bound on the computational complexity of determining which surviving segments belong to each ideal orbit,
we turn to the weighted orbit-counting algorithm introduced by Agol, Hass and Thurston~\cite[Sections~4 and~6]{agol2006computational}.
This will allow us to accomplish our task in time polynomial in $|\tri|$.

In what follows, we first describe the Agol-Hass-Thurston weighted orbit-counting algorithm~\cite{agol2006computational}, together with its original application, before specifying how we apply this algorithm to our situation.

\paragraph*{The Agol-Hass-Thurston weighted orbit-counting algorithm}

Abstractly, the algorithm involves $k$ \textbf{pairings} (i.e., bijections that either preserve or reverse the ordering of the intervals) between sequences of consecutive integers in $[1,N] := \{ n\in\mathbb{Z} : 1\leqslant n\leqslant N \}$;
and a \textbf{weight function} $w:[1,N]\to\mathbb{Z}^d$ that partitions $[1,N]$ into $m$ subintervals of constant weights $z_1,\ldots,z_m \in \mathbb{Z}^d$.
For each orbit of $[1,N]$ under the pairings, the algorithm computes the sum of the weights of the orbit's elements.
Its running time is shown to be polynomial in $kmd \log{D} \log{N}$, where $D$ is the total weight (i.e., the sum of all weights combined).

\paragraph*{Computing the components of a normal surface}

In~\cite{agol2006computational}, the orbit-counting algorithm is applied to a normal surface $S$ with sum of coordinates equal to $W$ in a triangulation with $t$ tetrahedra.
The intersection points of $S$ with the edges of $T$ are labelled by the elements of $[1,N]$, with $N=W$.
Pairings are defined between intersection points that are connected by a normal arc;
see~\cite[Figure~4]{agol2006computational}.
Naturally, there are at most three such pairings per triangle, giving at most $3\cdot 4t = 12 t$ pairings overall.

The weight vector is of length $d=7t$, with one entry for each elementary disc type.
For each such disc type and a fixed edge intersecting it, $1$ is added to the appropriate coordinate in the weight vector of the appropriate intersection point.
For the number of subintervals with constant weight, note that every tetrahedron-edge incidence adds at most $6$ changes to the weight vector, and that there are at most $6t$ of those, hence $m \leqslant 36t$.
Finally, the total weight $D$ equals the sum of all normal coordinates $W$ of $S$.

With this setup, the algorithm computes orbit weights, which are equivalent to the normal coordinates of the connected components of $S$, in time polynomial in $t$ and $W$.

\paragraph*{Finding the surviving segments in ideal orbits}

To apply the orbit-counting algorithm to our setting, let $(\tri,\ell)$ be an edge-ideal triangulation with $t$ tetrahedra, and let $s$ be the number of edges in $\tri$;
note that $s \leqslant 6t$.
As before, let $S$ be a normal surface in $\tri$, and let $W$ denote the sum of its normal coordinates. The surface cuts the edges $e_1,\ldots,e_s$ of $\tri$ into segments
$\varepsilon_j$, $1 \leqslant j \leqslant \sum_{i=1}^{s} n_i =: N$, where the first $n_1$ segments are contained in $e_1$, the next $n_2$ in $e_2$, and so on.
Note that $N$ is bounded by a linear function in $t+W$.

We identify the set of segments with its index set $[1,N]$.
Our goal is to define pairings on $[1,N]$ such that two segments are in the same orbit under these pairings if and only if those segments are in the same induced orbit.
To this end, call two segments \textbf{adjacent} if they are contained in either a common corner face (this only happens for type-$1$ segments)
or a common parallel face (this only happens for type-$2$ segments).
Observe that two segments are in the same induced orbit if and only if they are transitively related via adjacency.
Thus, it suffices to construct a collection of pairings such that every pair of adjacent segments is witnessed by some pairing.

For this, first consider an arbitrary triangular face $F$ of $\tri$.
Within $F$, the corner and parallel faces -- each of which witnesses a pair of adjacent segments -- are in bijection with the normal arcs.
Thus, as illustrated in \Cref{fig:ahtTriangle}, we can capture all the adjacencies witnessed by $F$ using just three pairings -- one pairing for each normal arc type in $F$.

\begin{figure}[htbp]
\centering
	\includegraphics[scale=1]{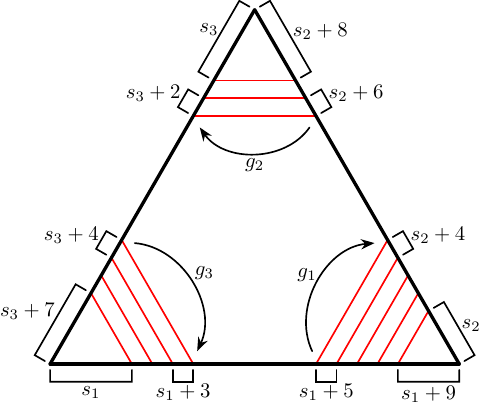}
\caption{%
In each triangular face, the normal arcs define three pairings.
For the example drawn here, all three pairings happen to be orientation-reversing:
$g_1:[s_1+5,s_1+9]\to[s_2,s_2+4]$,
$g_2:[s_2+6,s_2+8]\to[s_3,s_3+2]$, and
$g_3:[s_3+4,s_3+7]\to[s_1,s_1+3]$.
Adapted from Figure~4 of~\cite{agol2006computational}.}
\label{fig:ahtTriangle}
\end{figure}

Doing this for all of the triangular faces in $\tri$ gives a collection of pairings that witnesses all pairs of adjacent segments, as required.
Since $\tri$ has at most $4t$ faces, we have $k \leqslant 12t$ pairings in total.

For the weights, we assign a coordinate to each surviving or ideal segment, and essentially use the weight vectors as indicator functions.
In detail, label the ideal segments by $\rho_1,\ldots,\rho_{a}$, the surviving segments by $\sigma_1,\ldots,\sigma_{b}$, and set $d = a + b$.
We define a weight function $w: \{ 1, \ldots , N\} \to \mathbb{Z}^d$ for our segments, with the $j$th unit vector assigned to $\rho_j$, the $(a+i)$th unit vector assigned to $\sigma_i$, and the zero vector assigned to all other segments.

Recall that in our setting, $S$ intersects the ideal edges at most twice, so we can assume that $a \leqslant s + 2 \leqslant 6t + 2$.
We also assume that $S$ is nontrivial;
that is, crushing $S$ results in a smaller triangulation, and hence $b \leqslant 6(t-1)$.
Altogether, this yields $d < 12t$. In particular, we also have $D < 12t$ for the sum of all components of all weights.
It naturally follows that for this assignment of weights, the weight vector changes at most twice per non-zero assignment of weight vectors, and hence we have $m \leqslant 24t$.

After running the Agol-Hass-Thurston algorithm, we obtain one weight vector per orbit.
For each such weight vector, we can read off which ideal and surviving segments lie in the corresponding orbit:
nonzero entries in the first $a$ coordinates identify the ideal segments, and nonzero entries in the last $b$ coordinates identify the surviving segments.

Substituting the (crude) bounds we have on $k$, $m$, $d$, $D$ and $N$, we see that the running time is polynomial in $t\log(t+W)$. In particular, if $S$ is a (standard or quad) vertex normal surface, then the bounds from \cite{hass1999computational} imply bounds on $W$ such that this is a polynomial in $t$.

\subsection{Trivial induced orbits and cycles of faces}\label{ssec:trivialInducedOrbit}

Let $S$ be a normal $2$-sphere in a triangulation $\tri$. Consider the cell decomposition $\mathcal{D}$ obtained by non-destructively crushing $S$. We observe the following:
\begin{itemize}
\item Every $4$-sided football $F$ in $\mathcal{D}$ (see \Cref{subfig:football4}, left) comes from a parallel quad cell
(see \Cref{subfig:parallelQuad}).
Moreover, since we are crushing a connected surface, the two vertices of $F$ must be identified.

\item Every $3$-sided football $F$ in $\mathcal{D}$ (see \Cref{subfig:football3}, left) comes from either
a parallel triangular cell (see \Cref{subfig:parallelTri}) or a corner cell. Moreover, the two vertices of $F$ are identified if and only if $F$ came from a parallel triangular cell.

\item Every bigon face $F$ in $\mathcal{D}$ comes from either a parallel face or a corner face.
Moreover, the two vertices of $F$ are identified if and only if $F$ came from a parallel face.

\end{itemize}
Consider the subcomplex of $\mathcal{D}$ consisting only of the footballs and bigon faces.
From the above observations, we see that this subcomplex is equivalent to the subcomplex obtained from
applying non-destructive crushing to the union of the induced orbits of all the type-$1$ and type-$2$ segments.

Call a type-$1$ induced orbit \textbf{trivial} if said orbit forms a cone over a
homotopically trivial subcomplex of $S$ built from normal arcs and triangles.
In a similar vein, call a type-$2$ induced orbit \textbf{trivial} if said orbit forms a homotopically trivial product $I$-bundle.
The following observation is immediate:

\begin{lemma}\label{lem:flattenToEdge}
If $S$ is a nontrivial normal $2$-sphere such that every type-$1$ and type-$2$ segment has a trivial induced orbit, then
simultaneously flattening all of the footballs and bigon faces {\em (a)} leaves $\mathcal{D}$ topologically unchanged; and {\em (b)} flattens each induced orbit to a single edge.
\end{lemma}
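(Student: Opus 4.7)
The plan is to analyze each trivial type-$1$ or type-$2$ induced orbit individually as a subcomplex of $\mathcal{D}$. I would show that each such subcomplex is a topological $3$-ball whose interior is built entirely from footballs and bigon faces; simultaneously flattening all these cells then collapses each $3$-ball onto a single edge via a deformation retraction, proving both (a) and (b).

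First I would treat the trivial type-$1$ case. By definition, a trivial type-$1$ induced orbit $o(\sigma)$ is a cone $v * D$ with apex a vertex $v$ of $\tri$ and base $D$ a homotopically trivial subcomplex of $S$ built from normal arcs and triangles; hence $D$ is a $2$-disc and $o(\sigma)$ is a topological $3$-ball, built combinatorially from corner cells and corner faces. Non-destructive crushing collapses $D$ to a single point $p$, and by the observations preceding the lemma, this turns $o(\sigma)$ into a subcomplex of $\mathcal{D}$ built from $3$-sided footballs (one per corner cell) and bigon faces (one per corner face); since $D$ is contractible, the subcomplex remains a topological $3$-ball, with two distinguished vertices $v$ and $p$ and every $1$-cell running between them. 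A trivial type-$2$ induced orbit is handled analogously: it is a product $D \times I$, and non-destructive crushing collapses $D \times \{0\}$ and $D \times \{1\}$ to points $p_0, p_1$, yielding a $3$-ball built from $3$- and $4$-sided footballs and bigon faces, again with every $1$-cell running between the two distinguished apex vertices.

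Next I would check that flattening all the bigon faces in such a $3$-ball identifies its $1$-cells pairwise, and that these identifications transitively merge every edge into a single spanning edge between the two apex vertices; this uses the connectedness of $D$. Simultaneously flattening the footballs then deformation-retracts the $3$-ball's interior onto this spanning edge, which gives (b). For (a), the deformation retraction is supported entirely within the interior of each orbit's $3$-ball and fixes its boundary $2$-sphere, so the global topology of $\mathcal{D}$ is preserved.

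The main obstacle I expect is justifying that simultaneous flattening across all orbits decomposes cleanly into independent per-orbit collapses. The key observation to resolve this is that after non-destructive crushing, every normal arc of $S$ has already been collapsed to a point, so the $1$-cells of $\mathcal{D}$ are precisely the surviving and non-surviving segments, and each such segment belongs to exactly one induced orbit. Hence the subcomplexes corresponding to distinct orbits share only $0$-cells (apex vertices at shared vertices of $\tri$), and their flattenings commute. Once this independence is established, the rest reduces to the straightforward combinatorial verification sketched above.
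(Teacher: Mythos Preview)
The paper states this lemma as an ``immediate'' observation and gives no proof, so your proposal is supplying an argument where the authors leave none. Your overall strategy---treat each induced orbit as an independent subcomplex of $\mathcal{D}$, show that flattening it is a topology-preserving collapse onto a single edge, and use that distinct orbits meet only in $0$-cells---is the natural one and presumably what the authors have in mind.

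There is, however, a real gap in your execution. You assert that the base $D$ of a trivial type-$1$ orbit is ``a $2$-disc'' and hence that the orbit subcomplex in $\mathcal{D}$ is a $3$-ball; you make the analogous claim for type-$2$. Neither is correct. The definition only requires $D$ to be a \emph{homotopically trivial} subcomplex of $S$: it may be a tree of normal arcs (this occurs whenever every segment in the orbit meets only wedge cells and no corner/parallel cells), or a disc with $1$-dimensional branches, etc. So the orbit need not be a $3$-ball at all. Worse, in the type-$2$ case your two apex points $p_0,p_1$ are actually the \emph{same} vertex of $\mathcal{D}$ (both copies of $D$ lie in the same remnant of the connected sphere $S$, which collapses to a single point), so the orbit subcomplex is not even contractible---it has the homotopy type of a circle, and flattens to a single \emph{loop} edge. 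Consequently your claim that the retraction ``fixes its boundary $2$-sphere'' makes no sense; there is no such sphere, and even when the orbit is a genuine $3$-ball a retraction onto an interior edge cannot fix the boundary.

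The repair is to replace the $3$-ball picture by collapsibility. A contractible subcomplex $D$ of a triangulated $2$-sphere is collapsible, so $v\ast D$ and $D\times I$ collapse to a single edge through a finite sequence of elementary collapses. After non-destructive crushing, each elementary collapse of $D$ corresponds to flattening one bigon or football in $\mathcal{D}$ whose relevant faces are still distinct at that stage, and such a flattening is realised by an ambient deformation retraction supported in a product neighbourhood of that cell. This yields both (a) and (b); your independence observation then finishes the argument.
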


The utility of \Cref{lem:flattenToEdge} comes from the following observations:
\begin{enumerate}
\item The conclusions of the lemma essentially say that the topological effect of crushing is easy to understand.
\item As established in the remainder of this section, the assumptions of the lemma are guaranteed to be satisfied by any quad vertex normal $2$-sphere.
\end{enumerate}

We first consider the case of type-$2$ segments.

\begin{lemma}\label{lem:cycleParallel}
Let $S$ be a standard or quad vertex normal $2$-sphere in some triangulation $\tri$, and let $P$ be an arbitrary cycle of parallel faces in the cell decomposition induced by $S$. Then $P$ forms an annulus (not a M\"{o}bius band), and the boundary curves $\partial P$ bound two disjoint discs in $S$ that are normally isotopic along $P$.
\end{lemma}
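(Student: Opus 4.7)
The plan is to convert the cycle $P$ into a bona fide exchange annulus $A$ for $S$, and then use the hypothesis on $S$ together with \Cref{thm:jacoTollefsonExchange} to conclude that $A$ is trivial. Recall that by~\cite[Lemma~4.5]{Burton09Converting} a quad vertex surface is automatically standard vertex, so Jaco--Tollefson's characterisation is available under either hypothesis on $S$.

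\textbf{Step 1: from $P$ to $A$.} First I would construct the exchange annulus $A$ by pushing $P$ off into its adjacent parallel cells. For each parallel face $F$ in the cycle, choose one of the two adjacent parallel cells $Q_F$ (consistently on one side of $P$) and take a $0$-weight push-off $D_F$ of $F$ inside $Q_F$. This $D_F$ lies in some tetrahedron $\Delta$ with its two ``arc'' sides pushed onto the pair of parallel elementary discs of $S$ bounding $Q_F$, and its two ``segment'' sides embedded in the triangular faces of $\Delta$. Since consecutive parallel faces in $P$ share a segment, their push-offs share the corresponding arcs in the triangular faces of $\tri$, so assembling the $D_F$ produces a properly embedded surface $A \subset \tri$ with $A \cap S = \partial A$, homeomorphic to $P$, and whose intersection with every tetrahedron is a disjoint union of $0$-weight discs spanning pairs of elementary discs of $S$ --- exactly the data required by the definition of exchange annulus, modulo the orientability clause.

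\textbf{Step 2: $P$ is an annulus.} Next I would rule out the Möbius band case. The cleanest route is to observe that each parallel cell carries a canonical product ($I$-bundle) structure, whose fibres run between its two parallel elementary-disc sides; these product structures agree across any parallel face shared by two parallel cells. Propagating this structure around $P$ shows that a regular neighbourhood of $A$ in $\tri$ is an orientable product $I$-bundle over $P$, which in turn forces $P$ (and hence $A$) to be an annulus. In the setting we actually care about ($\tri$ a triangulation of $\mathbb{S}^3$), a shorter alternative is available: if $P$ were a Möbius band with $\partial P$ bounding discs $D_\pm$ in $S$, then $P \cup D_-$ would be an embedded $\mathbb{RP}^2$ in $\mathbb{S}^3$, which is impossible.

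\textbf{Step 3: concluding via Jaco--Tollefson.} With $A$ now confirmed to be an exchange annulus for the standard vertex $2$-sphere $S$, \Cref{thm:jacoTollefsonExchange} implies $A$ is trivial: the two components of $\partial A$ bound two disjoint discs in $S$ that are normally isotopic along $A$. Transporting this normal isotopy back through the product push-off from $P$ to $A$ yields the same conclusion for the two discs bounded by $\partial P$, normally isotopic along $P$.

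\textbf{Main obstacle.} I expect Step 2 to be the main technical hurdle: setting up a completely rigorous combinatorial argument for the product $I$-bundle structure on a neighbourhood of $A$ (which rules out the Möbius case unconditionally on $\tri$) requires careful bookkeeping of orientations across the parallel cells strung along $P$. The remaining steps are mostly translation between the cycle $P$ and its exchange-annulus push-off $A$.
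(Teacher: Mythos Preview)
Your overall approach---push $P$ to a zero-weight exchange annulus and invoke \Cref{thm:jacoTollefsonExchange}---is exactly what the paper does. The divergence is in Step~2, and there the gap is real rather than a matter of bookkeeping. Nothing about the local product structure of parallel cells forces the cycle $P$ to be two-sided: each cell is individually a product, but the gluings around the cycle can compose to an orientation-reversing monodromy, so $P$ can genuinely be a M\"obius band in a general $\tri$. Note too that your Step~1 already presupposes two-sidedness (``consistently on one side of $P$''), so you cannot use the push-off $A$ to reason about the M\"obius case at all. Your $\mathbb{RP}^2$ shortcut is correct whenever $\tri$ embeds no projective plane---in particular for $\tri\cong\mathbb{S}^3$, which is all the paper actually needs downstream---but the lemma is stated for arbitrary $\tri$.

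The paper disposes of the M\"obius case differently and in full generality. Instead of pushing $P$ off to one side, it takes $A$ to be the \emph{frontier} of a small regular neighbourhood of $P$ in the complement of $S$. This $A$ is automatically an annulus (the orientation double cover of $P$) and is an exchange annulus for $S$; its boundary consists of two curves in $S$ parallel to the single curve $\partial P$, cutting $S$ into two discs $D_1,D_2$. If $D_1$ and $D_2$ are not normally isotopic along $A$, then $A$ is a nontrivial exchange annulus and \Cref{thm:jacoTollefsonExchange} rules out $S$ being standard vertex. If they \emph{are} normally isotopic along $A$, then $S$ is the double of the one-sided normal projective plane $P\cup D_i$, so $S=2\Pi$ for some normal $\mathbb{RP}^2$, and again $S$ fails to be vertex. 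Either way the M\"obius case contradicts the hypothesis on $S$, with no assumption on $\tri$. Once $P$ is known to be an annulus, the paper's Step~3 is identical to yours.
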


\begin{proof}
The proof of this statement is a straightforward adaptation of the proof of \cite[Theorem 4.1]{jaco1995algorithms} by Jaco and Tollefson.

We first show that if $P$ forms a M\"{o}bius band, then $S$ cannot be a vertex normal surface.
Let $A$ denote the annulus given by the frontier of a small regular neighbourhood of $P$ in the complement of $S$. Note that $A$ is an exchange annulus for $S$.
The curves given by $\partial A$ bound two disjoint discs $D_1$ and $D_2$ in $S$ given by $S - N(\partial P)$.
We have two possibilities:
\begin{itemize}
\item By the work of Jaco and Tollefson, see \cite{jaco1995algorithms} and \Cref{ssec:vertexAndExchange}, if $D_1$ and $D_2$ are not normally isotopic along $A$, then $S$ is not a vertex normal $2$-sphere.
\item On the other hand, if $D_1$ and $D_2$ are normally isotopic along $A$, then we see that $S$ is the double of an embedded one-sided projective plane.
Thus, we again conclude that $S$ is not a vertex normal surface.%
\footnote{Of course, $\tri$ is usually a $3$-sphere in this paper, so it never contains embedded projective planes.
But this argument clarifies that the result applies to other $3$-manifolds as well.}
\end{itemize}

Thus, we conclude that $P$ must form an annulus.
In particular, the curves given by $\partial P$ bound two disjoint disc subsets $D_1$ and $D_2$ of $S$.
Isotope $P$ slightly to obtain a zero-weight annulus $E$ such that $\partial E$ bounds two disjoint discs containing $D_1$ and $D_2$.
Observe that $E$ is an exchange annulus for $S$.
Thus, if $S$ is a vertex normal $2$-sphere, then it follows immediately from the work of Jaco and Tollefson, see \cite{jaco1995algorithms} and \Cref{ssec:vertexAndExchange}, that $D_1$ and $D_2$ must be normally isotopic along $P$.
\end{proof}

\begin{corollary}\label{cor:parallelRegion}
If $S$ is a standard or quad vertex normal $2$-sphere in some triangulation $\tri$, then every type-$2$ segment has a trivial induced orbit.
\end{corollary}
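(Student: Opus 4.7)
The plan is to apply Lemma~\ref{lem:cycleParallel} to endow $o(\sigma)$ with a product $I$-bundle structure over a disc base, which is precisely the required triviality condition. Each parallel $3$-cell in $o(\sigma)$ is a prism formed by a normal disc crossed with an interval, and adjacent prisms glue along parallel faces -- themselves products of a normal arc with an interval -- in a way that respects the $I$-direction. These local product structures assemble into an $I$-bundle over a connected surface-with-boundary $D$, which is naturally identified via the product structure with each of two disjoint parallel copies $D_+, D_- \subset S$.

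The only obstruction to the bundle being a genuine (untwisted) product, rather than a twisted $I$-bundle, is a cycle of parallel faces on $\partial o(\sigma)$ that forms a M\"{o}bius band. Lemma~\ref{lem:cycleParallel} rules out exactly this possibility, so $o(\sigma) \cong D \times I$.

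To complete the proof, it remains to show that $D$ is contractible, i.e., that $D$ is a disc. Since $D$ is a connected planar subsurface of $S \cong S^2$, this is equivalent to $\partial D$ having only one component. I would argue by contradiction: suppose that $\partial D$ has $k \geq 2$ components, giving rise to $k$ distinct cycles of parallel faces $P_1, \dots, P_k$ and their push-off exchange annuli $A_1, \dots, A_k$. By Lemma~\ref{lem:cycleParallel}, each $A_i$ is a trivial exchange annulus for $S$: its boundary bounds two disjoint discs in $S$ that are normally isotopic along $A_i$.

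The main obstacle will be turning the existence of \emph{multiple} trivial exchange annuli (arising from the multiple boundary components of $D$) into a genuine contradiction with $S$ being vertex. My approach would be to perform regular exchange simultaneously along all of $A_1, \dots, A_k$; since $k \geq 2$, this partitions $S$ into genuinely distinct pieces and yields a decomposition of the form $2S = X + Y$, or $S + L = X + Y$ with $L$ a union of vertex links, in which neither $X$ nor $Y$ is a multiple of $S$. Such a decomposition contradicts the vertex characterization (using quad-vertex by the sum characterisation in \Cref{ssec:vertexAndExchange}, and standard-vertex by Theorem~\ref{thm:jacoTollefsonExchange}). If the simultaneous regular-exchange bookkeeping proves too delicate, an alternative route is to construct a single non-trivial exchange annulus directly from two distinct boundary cycles of $D$, and then invoke Theorem~\ref{thm:jacoTollefsonExchange} to derive the same contradiction.
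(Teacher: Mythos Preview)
Your opening is fine: $O_\sigma$ is an $I$-bundle over a base surface, and the M\"obius-band case is excluded by Lemma~\ref{lem:cycleParallel}, so the bundle is a product $D\times I$. The gap is in the step showing that $D$ is a disc. You invoke only the first conclusion of Lemma~\ref{lem:cycleParallel} (that each cycle of parallel faces is an annulus) and then try to rule out a multi-holed base via an ad hoc exchange argument. But every exchange annulus $A_i$ you produce is, by that same lemma, \emph{trivial}; performing regular exchanges along trivial exchange annuli does not in general yield a sum $2S=X+Y$ or $S+L=X+Y$ with $X,Y$ not multiples of $S$, so the contradiction you sketch does not materialise. Your fallback (``construct a single non-trivial exchange annulus from two distinct boundary cycles'') is likewise a hope rather than an argument.

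The paper's proof sidesteps all of this by using the \emph{second} conclusion of Lemma~\ref{lem:cycleParallel}: for any cycle $P$ of parallel faces, the two discs in $S$ bounded by $\partial P$ are normally isotopic along $P$. That normal isotopy is realised by parallel cells, so $P$ literally cuts off a $D^2\times I$ subset of $O_\sigma$ itself. Hence every such cycle---in particular every boundary cycle of the base---is contractible in $O_\sigma$; after noting that the base cannot be all of $S^2$ (which would force $S$ to be disconnected), the base must be a disc. In other words, the lemma you already cite contains exactly the information needed to finish the proof in one line, and invoking it fully makes your entire exchange-annulus discussion unnecessary.
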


\begin{proof}
Consider an arbitrary type-$2$ segment $\sigma$.
The induced orbit $O_\sigma$ of $\sigma$ is an $I$-bundle built from parallel cells and parallel faces.
By \Cref{lem:cycleParallel}, any cycle of parallel faces in $O_\sigma$ must form an annulus that cuts off a $D^2\times I$ subset of $O_\sigma$. In particular, every such cycle must be contractible in $O_\sigma$.
Moreover, $O_\sigma$ cannot be a trivial $I$-bundle over the $2$-sphere, since this requires $S$ to be disconnected.
The only remaining possibility is that $O_\sigma$ is a homotopically trivial product $I$-bundle, and the statement follows from the definition of a trivial induced orbit.
\end{proof}

For type-$1$ segments, we take a similar approach, except this time we consider an arbitrary disc $C$ formed from a cycle of corner faces in the induced cell decomposition.

\begin{lemma}\label{lem:cycleCorner}
Let $S$ be a quad vertex normal $2$-sphere in some triangulation $\tri$, and let $C$ be a disc formed from a cycle of corner faces. Then $\partial C$ must bound a disc subset of $S$ consisting entirely of triangles.
\end{lemma}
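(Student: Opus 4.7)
My plan is to adapt the strategy of \Cref{lem:cycleParallel}, replacing the role of the exchange annulus by the vertex link $L_v$ at the apex vertex $v$. Because the normal arcs of $L_v$ are the innermost arcs near $v$ in each face of $\tri$ incident to $v$, and the bounding arcs of the corner faces of $C$ all cut off $v$, the link $L_v$ is disjoint from $S$ and meets $C$ in a single simple closed curve $\gamma$ that traverses each corner face once and is parallel to $\partial C$ on $C$. Let $D_1, D_2 \subset S$ be the two discs bounded by $\partial C$, let $\delta_1, \delta_2 \subset L_v$ be the two discs bounded by $\gamma$, and let $A_C$ denote the annular region of $C$ between $\partial C$ and $\gamma$.

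Next, I would perform an annular surgery on $S \sqcup L_v$ along $A_C$: cutting $S$ along $\partial C$ and $L_v$ along $\gamma$, then reglueing the four resulting discs via two parallel pushoffs of $A_C$ into adjacent tetrahedra, yields two topological $2$-spheres
\[ \Sigma_1 = D_1 \cup A_C \cup \delta_1, \qquad \Sigma_2 = D_2 \cup A_C \cup \delta_2. \]
The key technical step is to verify that, after normalisation, $\Sigma_1 \sqcup \Sigma_2$ is a normal surface with standard coordinates $\mathbf{v}(S) + \mathbf{v}(L_v)$; that is, the multiset of elementary discs of $\Sigma_1 \sqcup \Sigma_2$ coincides with that of $S \sqcup L_v$. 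This should hold because each pushoff of $A_C$ has zero edge-weight and contributes no elementary disc, so the elementary discs of $\Sigma_1$ and $\Sigma_2$ are precisely those of $D_1 \cup D_2 = S$ and $\delta_1 \cup \delta_2 = L_v$, redistributed between the two spheres according to the surgery.

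Finally, I would invoke the quad-vertex characterisation of $S$. Since $\mathbf{q}(L_v) = \mathbf{0}$, projecting the identity $\Sigma_1 + \Sigma_2 = S + L_v$ to quad coordinates gives $\mathbf{q}(\Sigma_1) + \mathbf{q}(\Sigma_2) = \mathbf{q}(S)$, and each quad of $\Sigma_i$ must come from $D_i$. Suppose for contradiction both $D_1$ and $D_2$ contained a quad. Since a connected normal sphere is either canonical or a vertex link, and a vertex link has zero quad coordinates, both $\Sigma_i$ would then be canonical. The quad-vertex characterisation would force $\mathbf{q}(\Sigma_i) = n_i \mathbf{q}(S)$ for non-negative integers with $n_1 + n_2 = 1$, so one $n_i$ must be zero, contradicting the supposition. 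Hence at least one of $D_1, D_2$ consists entirely of triangles, as required.

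The main obstacle will be establishing the identity $\Sigma_1 + \Sigma_2 = S + L_v$. Because $A_C$ lies in the $2$-skeleton of $\tri$ and its pushoffs into adjacent tetrahedra are zero-weight (non-elementary) discs, making the construction rigorous requires a careful tetrahedron-by-tetrahedron analysis of the corner cells, checking both that the matching equations are satisfied and that no spurious elementary discs are introduced.
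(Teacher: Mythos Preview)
Your proposal is correct and follows essentially the same route as the paper's proof: both arguments use the vertex link $L$ at the apex of $C$ and exhibit a decomposition $S+L=N_0+N_1$ into two nontrivial normal $2$-spheres, contradicting the quad-vertex property when both halves $D_0,D_1$ of $S$ contain quads. The paper's execution is slightly more direct---it writes the new spheres as $N_i=D_i\cup E_{1-i}$ (cross-pairing the halves of $S$ with the opposite halves of $L$), which are already normal and make the identity $S+L=N_0+N_1$ immediate, bypassing your annular-surgery-then-normalise step and the tetrahedron-by-tetrahedron check you anticipate as the main obstacle.
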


\begin{proof}
For the sake of notation, arbitrarily assign the numbers $0$ and $1$ to the two sides of the disc $C$.
The curve $\partial C$ divides $S$ into two discs $D_0$ and $D_1$, labelled so that $D_i$ meets $C$ on side $i$.
Suppose that $D_0$ and $D_1$ each contain at least one quad.
We show that, under this assumption, $S$ cannot be a quad vertex surface.

To this end, note that the interior of the disc $C$ contains a single vertex of $\tri$.
Let $L$ denote the vertex linking sphere of this vertex.
The disc $C$ also divides $L$ into two discs $E_0$ and $E_1$, where again we label so that $E_i$ meets $C$ on side $i$.
Since both $D_0$ and $D_1$ contain quads, we can form two nontrivial normal $2$-spheres $N_i := D_i\cup E_{1-i}$, $i=0,1$. By construction, we have $S + L = N_0 + N_1$, and hence $S$ is not a quad vertex normal sphere.\footnote{%
This argument shows that $S$ does not even need to be a vertex surface for the same conclusion to hold;
it is enough for $S$ to be a quad \emph{fundamental} normal $2$-sphere.}
\end{proof}

\begin{corollary}\label{cor:cornerRegion}
If $S$ is a quad vertex normal $2$-sphere, then every type-$1$ segment has a trivial induced orbit.
\end{corollary}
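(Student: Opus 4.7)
The plan is to combine closure of the orbit under normal-arc adjacency with \Cref{lem:cycleCorner}. Given a type-$1$ segment $\sigma$, let $v$ be the vertex of $\tri$ at its endpoint. Every corner cell of $\tri$ is a cone from a vertex of $\tri$ to a triangle of $S$ cutting off that vertex, and two normal triangles sharing a normal arc must cut off the same vertex of $\tri$ (namely the vertex that the shared arc cuts off in its face). Since $O_\sigma$ is closed under adjacency through corner faces, and every segment on a given corner face emanates from one common vertex of $\tri$, every corner cell of $O_\sigma$ cones from the same vertex $v$. Consequently $O_\sigma$ is a cone with apex $v$ over a connected subcomplex $B\subset S$ whose $2$-cells are triangles of $S$ cutting off $v$. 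The aim is to show $B$ is a disc.

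Because $S$ is quad vertex, $\mathbf{q}(S)\neq\mathbf{0}$, so $S$ contains at least one quad; in particular $B\subsetneq S$. Thus $B$ is a connected planar subsurface of the $2$-sphere $S$ with some number $k\geq 1$ of boundary circles $\gamma_1,\ldots,\gamma_k$, and the complement $S\setminus B^\circ$ splits into $k$ disc components $D_1,\ldots,D_k$, with $D_i$ on the side of $\gamma_i$ opposite $B$. Each $\gamma_i$ bounds a cycle of corner faces $C_i$ forming part of the lateral boundary of $O_\sigma$.

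Suppose for contradiction that $k\geq 2$. Applying \Cref{lem:cycleCorner} to each $C_i$ tells us that at least one of the two discs bounded by $\gamma_i$ in $S$ consists only of triangles. If every $D_i$ is triangle-only, then $S=B\cup D_1\cup\cdots\cup D_k$ is triangle-only, a contradiction. So some $D_{i_0}$ contains a quad, which forces the opposite disc $B\cup\bigcup_{j\neq i_0}D_j$ to be triangle-only; in particular $D_j$ is triangle-only for every $j\neq i_0$. Now fix such a $j$ and a normal arc $\alpha$ of $\gamma_j$, and let $T$ be the elementary triangle of $D_j$ having $\alpha$ on its boundary. The triangle of $B$ on the opposite side of $\alpha$ cuts off $v$, so by the ``same vertex'' observation above, $T$ also cuts off $v$. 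Orbit closure then places $T$ in $B$, contradicting $T\in D_j$ together with $B\cap D_j=\emptyset$.

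Hence $k=1$, $B$ is a disc, and $O_\sigma$ is a cone over a homotopically trivial subcomplex of $S$ built from triangles and normal arcs, as required. The main obstacle here is the case analysis stemming from \Cref{lem:cycleCorner}: the lemma only guarantees that \emph{one} of the two discs bounded by each $\gamma_i$ is triangle-only, which is a priori compatible with all the quads of $S$ concentrating inside a single $D_{i_0}$. Defeating that configuration is exactly where the orbit-closure observation about adjacent normal triangles cutting off a common vertex does the decisive work.
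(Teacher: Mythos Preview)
Your overall strategy matches the paper's: combine \Cref{lem:cycleCorner} with the orbit-closure observation that adjacent normal triangles cut off the same vertex of $\tri$. However, there is a genuine gap in the step where you pass from ``$B$ is a connected subcomplex of $S$'' to ``$B$ is a connected planar subsurface with boundary circles $\gamma_1,\ldots,\gamma_k$''. The subcomplex $B$ need not be a $2$-manifold with boundary. By definition the orbit $O_\sigma$ contains every corner \emph{face} incident to a segment already in the orbit, regardless of whether the adjacent $3$-cells are corner cells or wedge cells. So if a normal arc $\alpha$ has quadrilaterals on both sides, the corner face over $\alpha$ still enters $O_\sigma$ (via either endpoint segment), and $\alpha$ sits in $B$ as a dangling $1$-cell with no $2$-cell of $B$ on either side. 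More generally, at a point $p=S\cap e$ where two or more of the tetrahedra around $e$ carry quads, the link of $p$ in $B$ is a disjoint union of several arcs, so $p$ is a non-manifold point. In such situations your ``boundary circles'' $\gamma_i$ may fail to be embedded, and then the cone $C_i$ of corner faces over $\gamma_i$ is not an embedded disc, so \Cref{lem:cycleCorner} does not apply to it.

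The paper avoids this by never assuming $K$ ($=B$) is a surface. Instead it takes an arbitrary \emph{embedded} cycle $\gamma$ of arcs in $K$; the cone over such a $\gamma$ with apex $v$ is a genuine disc of corner faces, so \Cref{lem:cycleCorner} applies and produces a triangle-only disc $D\subset S$ with $\partial D=\gamma$. Your orbit-closure observation is then exactly what shows $D\subset K$ (the paper asserts this step without spelling it out). Hence every embedded cycle in $K$ bounds a disc in $K$, so $K$ is simply connected; since $K\subsetneq S$ (as $S$ contains a quad), $K$ is contractible. Your key insight is correct and is precisely the missing detail in the paper's terse argument --- you just need to apply it to embedded cycles directly rather than to the boundary of an assumed surface structure.
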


\begin{proof}
Consider an arbitrary type-$1$ segment $\sigma$.
The induced orbit $O_\sigma$ of $\sigma$ is built from corner cells and corner faces.
It follows immediately that $O_\sigma$ is a cone over a subcomplex $K$ of $S$ built from
normal arcs and triangles, and hence it suffices to show that $K$ is homotopically trivial.
By \Cref{lem:cycleCorner}, we know that every cycle of arcs in $K$ bounds a disc subset of $K$.
Moreover, $K$ cannot be a $2$-sphere, since this requires $S$ to have a vertex-linking component.
Thus, the only possibility is that $K$ is homotopically trivial.
\end{proof}

\subsection{Induced orbit characterisation of quad vertex \texorpdfstring{$2$}{2}-spheres}\label{ssec:characteriseQuadVertexSphere}

Our crushing techniques rely on \Cref{cor:parallelRegion,cor:cornerRegion}, which show that quad vertex $2$-spheres have trivial induced orbits.
The converse is also true:

\begin{restatable}{lemma}{lemQuadVertexOrbits}\label{lem:quadVertexOrbits}
A normal $2$-sphere is quad vertex if and only if every type-$1$ and every type-$2$ segment has a trivial induced orbit.
\end{restatable}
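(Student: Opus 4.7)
The forward direction is already in hand: \Cref{cor:parallelRegion} shows that every type-$2$ induced orbit of a quad vertex sphere is a homotopically trivial product $I$-bundle, and \Cref{cor:cornerRegion} shows that every type-$1$ orbit is a cone over a disc. So the real work is the converse, which I would prove by contrapositive: assuming $S$ is not quad vertex, I produce a type-$1$ or type-$2$ segment with a non-trivial induced orbit. The natural split is on whether $S$ is standard vertex.

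If $S$ is not even standard vertex, \Cref{thm:jacoTollefsonExchange} supplies a non-trivial exchange annulus $A$ for $S$. Since $A$ is built from zero-weight discs, each of which is a parallel face of the induced cell decomposition, $A$ lies inside the union of the type-$2$ induced orbits. The defining property that the two disc subsets of $S$ cut off by $\partial A$ are not normally isotopic along $A$ is exactly what prevents the type-$2$ orbit containing (a component of) $A$ from being a $D^2 \times I$, so some type-$2$ orbit is non-trivial.

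If $S$ is standard vertex but not quad vertex, then by the definition of quad vertex there is a non-trivial identity $nS + L = X + Y$ in which $L$ is a non-empty disjoint union of vertex links and neither $X$ nor $Y$ is a multiple of $S$. I would fix a vertex $v$ of $\tri$ whose link $L_v$ appears as a component of $L$, and reverse the construction in the proof of \Cref{lem:cycleCorner}: the need to ``absorb'' the extra copy of $L_v$ should force a mismatch between the normal triangles at $v$ belonging to $S$ and those belonging to $X$ and $Y$, producing a cycle $c$ of normal arcs at $v$ that bounds two disc subsets of $S$ each of which contains at least one quad. Such a $c$ is the boundary of a cone of corner faces at $v$ sitting in some type-$1$ induced orbit $O$, and because $c$ does not bound an all-triangle disc inside the base $K$ of $O$, the base $K$ cannot be a disc, so $O$ is non-trivial.

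The hard part will be this last step: rigorously extracting the cycle $c$, together with quads on both sides of $c$, from the algebraic identity $nS + L = X + Y$. In particular, one must rule out the possibility that the contribution of $L_v$ is absorbed entirely inside a single component of the at-$v$ triangles of $S$, in which case the resulting $K$ would already be a disc. I expect this to follow from the hypothesis that neither $X$ nor $Y$ is a multiple of $S$, but the combinatorial bookkeeping of how the normal triangles at $v$ distribute across $S$, $L_v$, $X$, and $Y$ is the delicate point.
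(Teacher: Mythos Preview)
Your treatment of the forward direction and of the ``not standard vertex'' sub-case matches the paper's, with one small slip: the zero-weight discs in the definition of an exchange annulus sit in the interiors of tetrahedra, not in the $2$-skeleton, so they are not literally parallel faces of the induced cell decomposition. You must first isotope $A$ so that it is built from parallel faces; the paper does this explicitly.

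The genuine gap is in your second sub-case, where $S$ is standard vertex but not quad vertex. You correctly identify the target---a cycle of corner faces at some vertex $v$ whose boundary curve separates $S$ into two discs each containing a quad---but you have no mechanism to produce it. ``Reversing the construction of \Cref{lem:cycleCorner}'' and appealing to a ``mismatch of normal triangles'' is a hope, not an argument, as you yourself concede. The paper supplies exactly the missing mechanism. It first invokes \Cref{lem:x+y} to reduce to the specific equation $S + L = X + Y$ with $L$ a \emph{single} vertex link and $X$, $Y$ nontrivial connected $2$-spheres not normally isotopic to $S$. It then looks at the exchange annuli arising from the curves of $X \cap Y$: since $X$ and $Y$ are distinct, at least one such annulus $A$ is nontrivial. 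If both boundary circles of $A$ lie in $S$ one is back in the type-$2$ argument; the interesting case is when one circle $\gamma_S$ lies in $S$ and the other $\gamma_L$ lies in $L$. After isotoping $A$ to parallel faces, $\gamma_L$ bounds a disc of corner faces at $v$, and attaching this disc to $A$ gives (once $L$ is removed) a disc $C$ built from corner faces with $\partial C = \gamma_S$. Nontriviality of $A$ forces both discs that $\gamma_S$ cuts from $S$ to contain a quad, so the type-$1$ orbit containing $C$ is nontrivial. This exchange-annulus-with-mixed-boundary construction is the idea you are missing; without it your second sub-case remains a heuristic rather than a proof.
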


\begin{proof}
We already proved one direction in \Cref{cor:cornerRegion,cor:parallelRegion}.
To prove the inverse, let $S$ be a normal $2$-sphere, and suppose $S$ is not quad vertex.
By \Cref{lem:x+y}, there are only three cases to consider.

In the first two cases ($X+Y$ is equal to either $S$ or $2S$), we see that $S$ is not even a standard vertex normal surface.
Thus, by the characterisation of Jaco and Tollefson (see~\cite[Theorem~4.1]{jaco1995algorithms}, or
\Cref{thm:jacoTollefsonExchange} in this paper), $S$ admits a nontrivial exchange annulus $A$.
By definition, $A$ is a $0$-weight surface, but we can isotope $A$ so that it becomes an annulus built entirely from parallel faces.
These parallel faces all form part of the same type-$2$ orbit, and this orbit must be nontrivial because $A$ is nontrivial.

In the third case, we have $S+L = X+Y$, where $L$ is a vertex link, and where $X$ and $Y$ are nontrivial normal $2$-spheres.
Consider the collection of exchange annuli corresponding to the curves of intersection between $X$ and $Y$.
Since $X$ and $Y$ are distinct, this collection necessarily includes at least one nontrivial exchange annulus $A$.
If both components of $\partial A$ lie in $S$, then by the same argument as above there must be a nontrivial type-$2$ orbit.

Otherwise, we have one component of $\partial A$ (call it $\gamma_S$) in $S$, and the other (call it $\gamma_L$) in $L$.
As before, isotope $A$ so that it is built entirely from parallel faces.
After this isotopy, let $D_0$ and $D_1$ denote the two disc subsets of $S$ given by cutting $S$ along $\gamma_S$.
Since $A$ is nontrivial, neither $D_0$ nor $D_1$ can be isotopic along $A$ to a disc subset of $L$.
In other words, $D_0$ and $D_1$ must each contain at least one quad.
For the other boundary curve of $A$, observe that $\gamma_L$ bounds a cycle of corner faces, and that attaching these corner faces to $A$ gives a disc $C$.
If we now remove the vertex link $L$, then this disc $C$ becomes a cycle of corner faces bounded by the curve $\gamma_S$.
We claim that $C$ belongs to a (type-$1$) induced orbit $o_C$ that is nontrivial.
Indeed, if $o_C$ were trivial, then it would contain a cone over either $D_0$ or $D_1$, which is impossible since we already observed that each of these discs includes a quad.
\end{proof}

\subsection{The effect of crushing on ideal loops}\label{ssec:crushIdealLoop}

Let $(\tri,\ell)$ be an edge-ideal triangulation of a (possibly trivial) knot $K$.
We now study the effect of crushing quad vertex $2$-spheres that intersect the ideal loop $\ell$ in either zero or two points.
We begin with the case where there are zero intersection points:

\begin{restatable}{lemma}{lemCrushZero}\label{lem:crushW=0}
Let $(\tri,\ell)$ be an edge-ideal triangulation of a (possibly trivial) knot $K$, and let $S$ be a quad vertex $2$-sphere with $w_\ell(S)=0$.

Crushing $S$ yields a new triangulation $\tri'$ consisting of either one or two $3$-sphere components. Moreover, all the edges in $\ell$ are left untouched, and together form an embedded loop in a component $\tri_0'$ of $\tri'$, and $(\tri_0',\ell)$ forms an edge-ideal triangulation of $K$.
\end{restatable}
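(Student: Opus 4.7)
The plan is to combine Burton's topological analysis of crushing $2$-spheres with the structural control over quad vertex spheres established in \Cref{ssec:trivialInducedOrbit}. Since $\tri$ triangulates $\mathbb{S}^3$ and $S$ is a separating $2$-sphere, $S$ bounds two $3$-balls $B_1, B_2 \subset \tri$. The hypothesis $w_\ell(S) = 0$ means $\ell$ is disjoint from $S$, so $\ell$ lies entirely inside one of them; relabel so that $\ell \subset B_1$.

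First I would pin down the topology of $\tri'$. Because $S$ is quad vertex, \Cref{cor:parallelRegion,cor:cornerRegion} imply that every type-$1$ and type-$2$ induced orbit is trivial, so \Cref{lem:flattenToEdge} applies: the flattening step of crushing $S$ merely flattens each induced orbit to a single edge, with no topological effect. Consequently, the topological content of crushing $S$ coincides with that of its non-destructive version, which is well known~\cite{burton2014crushing} to amount to cutting along $S$ and capping both sides off with $3$-balls. This reproduces $\mathbb{S}^3 \sqcup \mathbb{S}^3$, one copy for each $B_i$; one of the two copies may be emptied out entirely during flattening (for instance when $S$ is vertex-linking), which is why $\tri'$ ends up with either one or two $3$-sphere components.

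Next I would verify that the edges of $\ell$ are left literally untouched. Since $w_\ell(S)=0$, every edge $\ell_i$ of $\ell$ is a type-$0$ segment. Unwinding the definition of induced orbit in \Cref{ssec:orbitSegment}, no corner or parallel face can be incident to a type-$0$ segment, since corner faces meet only type-$1$ segments and parallel faces meet only type-$2$ segments. Hence the induced orbit of $\ell_i$ is the singleton $\{\ell_i\}$, so crushing never identifies $\ell_i$ with any other segment. In each tetrahedron of $\tri$ containing $\ell_i$, this segment lies inside either a central cell (which becomes a tetrahedron of $\tri'$ directly) or a wedge cell (which collapses to a triangular purse and then flattens to a triangular face of $\tri'$); in both cases $\ell_i$ survives verbatim as an edge of $\tri'$. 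The same analysis applies to the vertices of $\tri$ visited by $\ell$: they are disjoint from $S$ and are not identified by any of the trivial atomic moves produced by the flattening step.

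To finish, I would note that consecutive edges of $\ell$ still meet at the same preserved vertices, so $\ell$ remains an embedded loop of edges in $\tri'$, sitting in the component $\tri_0'$ that arises from $B_1$. Since $\tri_0'$ is obtained topologically from $B_1$ by capping off $S$ with a $3$-ball disjoint from $\ell$, the pair $(\tri_0', \ell)$ is an edge-ideal triangulation of the same knot $K$. I expect the main obstacle to be precisely the verification that flattening does not distort $\ell$: without the quad vertex hypothesis, a nontrivial type-$1$ or type-$2$ orbit could cause flattening to alter the ambient isotopy class of $\ell$ in $\tri_0'$, and the conclusion would break. It is exactly \Cref{cor:parallelRegion,cor:cornerRegion} that rule out this eventuality.
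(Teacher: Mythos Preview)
Your overall architecture is sound and close to the paper's, but there is a genuine gap in the step where you assert that each edge $\ell_i$ of $\ell$ ``survives verbatim as an edge of $\tri'$''. You correctly note that $\ell_i$ is a type-$0$ segment and that, in each tetrahedron containing it, $\ell_i$ sits in either a central cell or a wedge cell. The problem is the all-wedge case: if $\ell_i$ is incident to \emph{no} central cell, then every $3$-cell around $\ell_i$ is a triangular purse, and flattening all of these purses leaves $\ell_i$ lying only on triangular faces that are not faces of any tetrahedron. Such isolated triangles are deleted in the extraction step of \Cref{defs:stages}\ref{step:flatten}, and $\ell_i$ is deleted with them. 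Nothing in the quad vertex hypothesis (\Cref{cor:parallelRegion,cor:cornerRegion}, \Cref{lem:flattenToEdge}) rules this out, since those results concern type-$1$ and type-$2$ orbits, not type-$0$ segments.

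The paper closes this gap with an argument that is independent of the quad vertex property and instead exploits $\tri\cong\mathbb{S}^3$: if an edge $e\in\ell$ were incident only to wedge cells, then the corner faces of those wedges at an endpoint $v$ of $e$ assemble into a disc $C$ with $v$ in its interior and $\partial C\subset S$. Capping $C$ with a disc subset of $S$ (which is disjoint from $\ell$ since $w_\ell(S)=0$) produces an embedded $2$-sphere meeting $\ell$ in exactly one point, hence non-separating in $\mathbb{S}^3$---a contradiction. This forces every edge of $\ell$ to meet at least one central cell, after which survival is immediate. Once you insert this step, the rest of your argument (topology via \Cref{lem:flattenToEdge} and pillow flattening, preservation of the knot type of $\ell$ since $S\cap\ell=\emptyset$) matches the paper.
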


\begin{proof}
Let $e$ be an edge in $\ell$.
Since $w_\ell(S)=0$, $S$ must be disjoint from $e$, and every induced cell incident to $e$ is either a central cell, or a wedge cell with no parallel faces.

Suppose that $e$ is only incident to wedge cells.
Each endpoint of $e$ forms the centre of a disc built from a cycle of corner faces of these wedge cells.
The boundary of such a disc $C$ forms a closed curve in $S$.
Thus, since $S$ is disjoint from $\ell$, we can take the union of $C$ with a disc subset of $S$
to obtain an embedded $2$-sphere $S'$ that intersects $\ell$ in exactly one point.
In other words, we have found a non-separating $2$-sphere $S'$ in the $3$-sphere, a contradiction.

Hence, every edge in $\ell$ must be incident to at least one central cell, and must therefore survive after crushing $S$.

It remains to determine the effect of crushing on the topology of $\tri$.
Non-destructively crushing $S$ yields a cell decomposition $D^\dagger$ representing a disjoint union of two $3$-spheres.
Let $D^\ast$ denote the cell decomposition obtained by simultaneously flattening all of the footballs and bigon faces in $D^\dagger$.
Since $S$ was quad vertex, $D^\ast$ is still a disjoint union of two $3$-spheres by \Cref{lem:flattenToEdge}.

The only remaining non-tetrahedron cells in $D^\ast$ are triangular pillows.
We proceed by flattening these pillows one at a time.
Each such flattening either has no topological effect, or deletes a $3$-sphere component.
However, since we already know that $\ell$ survives the crushing of $S$, the component containing $\ell$ cannot be deleted. Altogether, crushing $S$ yields a non-empty triangulation $\tri'$ with up to two components, one of which, denoted by $\tri_0$, contains $\ell$.

Since $S$ and $\ell$ were disjoint, the knot type of $\ell$ in $\tri_0$ is unchanged;
in other words, $(\tri_0',\ell)$ forms an edge-ideal triangulation of $K$.
\end{proof}

Suppose now that we have a quad vertex $2$-sphere $S$ that intersects $\ell$ in exactly two points.
Let $\ell_1, \ldots, \ell_m$ denote the edges of $\tri$ that together form the ideal loop $\ell$.
If both intersection points are on the same edge, then $S$ splits $\ell$ into \textbf{ideal arcs} of the following types:
\begin{itemize}
\item
one \textbf{short} ideal arc, meaning that the arc is formed from a single type-$2$ segment; and
\item
one \textbf{long} ideal arc, meaning that the arc is formed from two type-$1$ segments together with a (potentially empty) sequence of type-$0$ segments.
\end{itemize}
Otherwise, if the two intersection points are on two distinct edges, then $S$ splits $\ell$ into two long ideal arcs.
With this terminology in mind, we prove the following:

\begin{restatable}{lemma}{lemCrushTwo}\label{lem:crushW=2}
Let $(\tri,\ell)$ be an edge-ideal triangulation of a (possibly trivial) knot $K$, and let $S$ be a quad vertex $2$-sphere with $w_\ell(S)=2$,
so that $S$ cuts $\ell$ into two ideal arcs $\alpha_1$ and $\alpha_2$, corresponding respectively to two (possibly trivial) knots $K_1$ and $K_2$ such that $K = K_1 \# K_2$.

Crushing $S$ yields a new triangulation $\tri'$ consisting of either one or two $3$-sphere components.
Moreover, for each new knot $K_i$, $i=1,2$, we have either:
\begin{enumerate}
    \item\label[casei]{case:loopSurvives}
    all the segments in $\alpha_i$ survive the crushing to become an embedded loop $\ell_i'$ in a component $\tri_i'$ of $\tri'$, so that $(\tri_i',\ell_i')$ forms an edge-ideal triangulation of $K_i$;
    \item\label[casei]{case:nonLoop}
    $\alpha_i$ consists of two type-$1$ segments, and these two segments merge to form a single non-loop edge $\ell_i'$ in a component $\tri_i'$ of $\tri'$; or
    \item\label[casei]{case:loopDeleted}
    $\alpha_i$ is deleted.
\end{enumerate}
\Cref{case:nonLoop,case:loopDeleted} can only happen if $K_i$ is trivial.
If $K_1$ and $K_2$ both fall under \Cref{case:loopSurvives}, then the corresponding components $\tri_1'$ and $\tri_2'$ are distinct.
\end{restatable}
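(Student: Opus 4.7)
The plan is to mirror the proof of \Cref{lem:crushW=0}, with additional bookkeeping to track the two ideal arcs $\alpha_1$ and $\alpha_2$ through the crushing procedure. Since $S$ is a separating $2$-sphere in $\mathbb{S}^3$, non-destructively crushing $S$ yields a cell decomposition $\mathcal{D}^\dagger$ representing the disjoint union of two $3$-spheres $\tri_1^\dagger$ and $\tri_2^\dagger$, one per side of $S$; each $\tri_i^\dagger$ contains a new vertex $u_i$ obtained from collapsing the corresponding remnant. Because $\alpha_i$ lies on side $i$ of $S$ with both endpoints on $S$, it becomes a closed loop based at $u_i$ in $\tri_i^\dagger$, and by the definition of the connected sum decomposition this loop represents the summand $K_i$.

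Next, I would apply \Cref{lem:flattenToEdge}, which is available since $S$ is quad vertex and hence, by \Cref{cor:cornerRegion,cor:parallelRegion}, has only trivial type-$1$ and type-$2$ induced orbits. Simultaneously flattening all footballs and bigon faces is topologically neutral and collapses each induced orbit to a single edge. I would trace $\alpha_i$ through this flattening. If $\alpha_i$ is a short arc (a single type-$2$ segment), its orbit collapses to a single loop edge at $u_i$. If $\alpha_i$ is a long arc, each of its two type-$1$ segments becomes an edge from some interior vertex of $\tri$ to $u_i$ while the intermediate type-$0$ segments are unchanged, so $\alpha_i$ becomes a concatenation of edges forming a closed curve based at $u_i$. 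The borderline scenario is a long arc consisting of exactly two type-$1$ segments meeting at a single interior vertex $v$, whose two segments happen to lie in the same type-$1$ orbit: these segments then merge into a single non-loop edge from $v$ to $u_i$. Here I would argue that, since the ambient type-$1$ orbit is a $3$-ball cone containing $\alpha_i$ as a properly embedded arc, $\alpha_i$ is unknotted in the corresponding half-space, forcing $K_i$ to be trivial; this matches \Cref{case:nonLoop}.

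The remaining work is to analyze the atomic moves that complete the crushing. As in the proof of \Cref{lem:crushW=0}, the only non-tetrahedron cells after the above flattening are triangular pillows, and flattening each pillow is either topologically neutral or deletes a $3$-sphere component. During these moves, the curve representing $\alpha_i$ can only be affected in two ways: it can be entirely destroyed when its enclosing component is deleted, giving \Cref{case:loopDeleted}; or its edges can be further identified until it collapses to a single non-loop edge, giving \Cref{case:nonLoop}. In either sub-case, $K_i$ must be trivial, because a knot represented by a single edge (or no edge at all) in a $3$-sphere triangulation is necessarily the unknot. Otherwise, the loop survives as an embedded loop $\ell_i'$ in a component $\tri_i'$ of $\tri'$, so $(\tri_i', \ell_i')$ is an edge-ideal triangulation of $K_i$, matching \Cref{case:loopSurvives}. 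Finally, if both $\alpha_1$ and $\alpha_2$ fall under \Cref{case:loopSurvives}, then $\tri_1'$ and $\tri_2'$ are distinct because atomic moves act locally and cannot merge the initially disjoint components $\tri_1^\dagger$ and $\tri_2^\dagger$. The main obstacle will be making this step rigorous: precisely justifying the combinatorial reasons why the only failure modes are \Cref{case:nonLoop,case:loopDeleted}, and why these force $K_i$ to be the unknot.
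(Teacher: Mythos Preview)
Your overall strategy matches the paper's proof closely, but there is one concrete error and one missing argument in the pillow-flattening analysis. Flattening a triangular pillow identifies its two triangular \emph{faces}, which already share all three boundary edges; hence this move never changes the $1$-skeleton. Consequently, the merging of two type-$1$ ideal segments into a single non-loop edge (\Cref{case:nonLoop}) can \emph{only} occur at the football/bigon stage, exactly where you first identified it via the common type-$1$ orbit and its cone disc. Your subsequent claim that pillow flattening ``can further identify edges until it collapses to a single non-loop edge'' is false and should be removed; the paper instead observes that when $D_i^\ast$ contains at least one tetrahedron, the loop $\ell_i^\ast$ passes through the pillow flattenings untouched.

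For \Cref{case:loopDeleted} you still owe an argument that $K_i$ is trivial, and ``a knot represented by no edge at all is necessarily the unknot'' is not it: before deletion, $\ell_i^\ast$ was a genuine loop in $D_i^\ast$ carrying the knot type $K_i$. The paper's fix is structural: deletion happens precisely when $D_i^\ast$ consists \emph{entirely} of triangular pillows, and such a cell decomposition of $\mathbb{S}^3$ has exactly three edges, which necessarily form an unknotted loop. Thus $\ell_i^\ast$ is either that unknotted $3$-edge loop or a single non-loop edge, and in either case $K_i$ is trivial. With these two corrections your proof becomes essentially the paper's.
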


\begin{proof}
Let $\sigma$ be a segment in the ideal loop $\ell$, and let $O_\sigma$ be the induced orbit of $\sigma$.
\begin{itemize}
    \item If $\sigma$ is type-$0$, then $O_{\sigma}$ consists of nothing other than $\sigma$ itself.
    \item If $\sigma$ is type-$2$, then $\sigma$ is the
    only type-$2$ segment amongst the ideal segments.
    Thus, the only ideal segment in $O_{\sigma}$ is $\sigma$ itself.
    \item If $\sigma$ is type-$1$, then there exists exactly one other ideal type-$1$ segment $\sigma'$ appearing on the same side of the $2$-sphere $S$.
    Thus, $\sigma'$ is the only other ideal segment that can appear in $O_{\sigma}$. If $\sigma$ and $\sigma'$ are, indeed, in the same induced orbit, then there must be an embedded disc $\Delta$ built from corner faces with $\partial\Delta = \sigma \cup \sigma' \cup \alpha$, where $\alpha$ is a path of normal arcs in $S$. In particular, $\sigma$ and $\sigma'$ together form a long ideal arc and, because they bound a disc after non-destructively crushing, must represent the unknot. This is the only scenario in which two ideal segments $\sigma$ and $\sigma'$ can be in the same induced orbit.
\end{itemize}

Non-destructively crushing $S$ yields two cell decompositions $D_1^\dagger$ and $D_2^\dagger$, both of the $3$-sphere. The loop $\ell$ is decomposed into two loops $\ell_1^\dagger \subset D_1^\dagger$ and $\ell_2^\dagger \subset D_2^\dagger$,
and $(D_i^\dagger, \ell_i^\dagger)$ forms an ``edge-ideal cell decomposition'' of $K_i$.

Let $D_i^\ast$ denote the cell decomposition obtained by flattening all of the footballs and bigon faces in $D_i^\dagger$; this flattening causes some of the edges in $\ell_i^\dagger$ to be identified, yielding a possibly smaller collection $\ell_i^\ast$ of edges in $D_i^\ast$.

Since $S$ was quad vertex, by \Cref{lem:flattenToEdge}, $D_i^\ast$ is still a $3$-sphere. In addition, $\ell_i^\ast$ is topologically equivalent to $\ell_i^\dagger$ unless two edges in $\ell_i^\dagger$ belong to the same induced orbit.
By the observation made at the beginning of this proof, this only happens if: $\ell_i^\dagger$ is a two-edge unknot, and $\ell_i^\ast$ consists of a single non-loop edge given by identifying the two edges of $\ell_i^\dagger$.

The only remaining non-tetrahedron cells in $D_i^\ast$ are triangular pillows.
In the case where $D_i^\ast$ contains at least one tetrahedron, flattening all of these pillows has no topological effect, and we recover a triangulation $\tri_i'$. Since this flattening does not affect the $1$-skeleton, $\ell_i^\ast$ survives untouched inside $\tri_i'$.
In particular, if $\ell_i^\ast$ forms an embedded closed loop (rather than a non-loop edge), then $(\tri_i',\ell_i^\ast)$ forms an edge-ideal triangulation of $K_i$.

It remains to consider the case where $D_i^\ast$ consists entirely of pillows. In this case $D_i^\ast$ has exactly three edges, and these necessarily form an unknotted loop.
Thus, $\ell_i^\ast$ either:
\begin{itemize}
\item
forms an embedded closed loop, in which case it must form the unknot given by all three edges of $D_i^\ast$; or
\item
consists of a single non-loop edge.
\end{itemize}
Either way, it follows that $K_i$ must form the unknot.
Flattening all triangular pillows deletes $D_i^\ast$ entirely, and with it the trivial knot $K_i$.
\end{proof}

To summarise, \Cref{lem:crushW=0,lem:crushW=2} show that if we crush suitable quad vertex $2$-spheres, then this changes the ideal loop in a topologically meaningful way.
Moreover, as a consequence of the Agol-Hass-Thurston machinery described in \Cref{ssec:orbitSegment}, we can also efficiently keep track of how the ideal loop changes.
These two pieces of machinery are, together, what allow us to give the algorithmic applications that follow in \Cref{sec:algo,sec:np}.

\section{Algorithm and implementation}
\label{sec:algo}

The algorithm below takes a knot $K$ as input, and produces the prime factorisation of $K$ as a collection of edge-ideal triangulations.
As stated, the input is a diagram of $K$, but by skipping \Cref{step:buildIdealEdge} the algorithm can also be seen as taking an edge-ideal triangulation as input.

\begin{algorithm}\label{algo:summands}
Given a diagram of a knot $K$, compute the prime factorisation of $K$ (represented as edge-ideal triangulations) as follows:
\begin{enumerate}
\item\label[stepi]{step:buildIdealEdge}
Build an edge-ideal triangulation $(\tri_K,\ell_K)$ of $K$;
i.e., build a triangulation $\tri_K$ of $\mathbb{S}^3$ such that $K$ is embedded in $\tri_K$ as an ideal loop $\ell_K$. \Cref{ssec:buildEdgeIdeal} describes how this can be done.
\item
Create an empty list $\mathcal{P}$ (which will eventually contain all the prime summands).
Also, create a list $\mathcal{L}$ of edge-ideal triangulations to process, initially containing just $(\tri_K,\ell_K)$.
\item\label[stepi]{step:checkEmpty}
If $\mathcal{L}$ is nonempty, then pick and remove one edge-ideal triangulation $(\tri,\ell)$ from $\mathcal{L}$ and \textbf{continue to \Cref{step:crushKnot}}.
Otherwise, \textbf{terminate with output $\mathcal{P}$}.
\item\label[stepi]{step:crushKnot}
Search for a quad vertex normal $2$-sphere $S$ in $\tri$ with either $w_\ell(S)=0$ or $w_\ell(S)=2$.
	\begin{itemize}
	\item
	If $S$ exists, crush it and keep track of the resulting ideal loops, as discussed in \Cref{sec:identifyingIdealEdges}.
    Append all components of the resulting triangulation to $\mathcal{L}$ that contain an ideal loop. Discard the others.
	\item
	If $S$ does not exist, test whether $\ell$ forms a nontrivial knot. If so, add $(\tri,\ell)$ to $\mathcal{P}$.
	\end{itemize}
In either case, \textbf{return to \Cref{step:checkEmpty}}.
\end{enumerate}
\end{algorithm}

\subsection{Correctness of the algorithm}

\begin{theorem}\label{thm:algCorrect}
\Cref{algo:summands} terminates and correctly factorises the input knot $K$ into prime knots.
\end{theorem}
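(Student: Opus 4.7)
The plan is to split the proof into two parts: termination, via a strictly decreasing monovariant on $\mathcal{L}$; and correctness, via a loop invariant that relates $K$ to the current $\mathcal{L}$ and $\mathcal{P}$.

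For termination, I would set $\mu(\mathcal{L}) := \sum_{(\tri,\ell)\in\mathcal{L}} |\tri|$, the total tetrahedron count across all edge-ideal triangulations currently held in $\mathcal{L}$. Each pass through \Cref{step:checkEmpty,step:crushKnot} strictly decreases $\mu$: if no suitable quad vertex $2$-sphere is found, one triangulation (with at least one tetrahedron) is removed from $\mathcal{L}$; if one is found and crushed, the basic property that crushing a nontrivial normal surface strictly decreases the tetrahedron count forces the total tetrahedra across all resulting components to be strictly less than $|\tri|$, and filtering out components without an ideal loop can only decrease $\mu$ further. Since $\mu$ is a non-negative integer, the main loop halts after finitely many iterations.

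For correctness, I would write $K(\mathcal{L})$ for the connect sum of the knots represented by the edge-ideal triangulations currently in $\mathcal{L}$ (interpreting the empty connect sum as the unknot), and similarly $K(\mathcal{P})$. The loop invariant to maintain is that $K = K(\mathcal{L}) \mathbin{\#} K(\mathcal{P})$, and that every knot in $\mathcal{P}$ is nontrivial and prime. This holds trivially after \Cref{step:buildIdealEdge}. To prove preservation, I would handle each branch of \Cref{step:crushKnot} in turn. If $w_\ell(S)=0$, \Cref{lem:crushW=0} shows that the ideal loop survives intact in a single component representing the same knot, so $K(\mathcal{L})$ is unchanged. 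If $w_\ell(S)=2$, \Cref{lem:crushW=2} gives $K_{(\tri,\ell)} = K_1 \mathbin{\#} K_2$, with each summand either represented by a new edge-ideal triangulation that passes the filter, or else trivial under \Cref{case:nonLoop,case:loopDeleted} so that discarding the corresponding component does not affect $K(\mathcal{L})$. If no suitable $S$ exists, the contrapositive of \Cref{cor:quadVertexComposite} forces $K_{(\tri,\ell)}$ to be prime or trivial; the algorithm appends it to $\mathcal{P}$ only in the nontrivial case, preserving both halves of the invariant. Upon termination, $\mathcal{L} = \emptyset$ yields $K = K(\mathcal{P})$, and Schubert's uniqueness theorem identifies $\mathcal{P}$ with the prime factorisation of $K$.

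The main obstacle is the bookkeeping in the $w_\ell(S)=2$ branch: the filter ``keep only components containing an ideal loop'' must be shown to interact correctly with the three possibilities of \Cref{lem:crushW=2}. Specifically, one must justify discarding a component in \Cref{case:nonLoop} (where the ideal arc collapses to a non-loop edge) and in \Cref{case:loopDeleted} (where the ideal arc is removed outright), and this is precisely what the lemma's guarantee that $K_i$ is trivial in both cases achieves. With this verified, the remainder is a routine induction on the monovariant $\mu$, leaning on the machinery developed in \Cref{sec:existenceSpheres,sec:identifyingIdealEdges}.
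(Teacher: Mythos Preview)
Your proposal is correct and follows essentially the same approach as the paper: termination via the strictly decreasing total tetrahedron count over $\mathcal{L}$, and correctness via the loop invariant that $K$ is the connect sum of the knots in $\mathcal{L}\cup\mathcal{P}$ with everything in $\mathcal{P}$ nontrivial prime, maintained using \Cref{lem:crushW=0,lem:crushW=2} and the contrapositive of \Cref{cor:quadVertexComposite}. The paper organises the $w_\ell(S)=2$ case by the number of surviving edge-ideal triangulations (two, one, or none) rather than by the three cases of \Cref{lem:crushW=2}, but this is a cosmetic difference only.
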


\begin{proof}
Each step in isolation clearly terminates. Thus, to show that the entire algorithm terminates we just need to
guarantee that \Cref{step:checkEmpty,step:crushKnot} are only repeated finitely many times.
For this, we consider the total number $\tau$ of tetrahedra amongst the triangulations in $\mathcal{L}$.
Each time we reach \Cref{step:checkEmpty} with $\tau > 0$, we remove a triangulation $(\tri,\ell)$ from $\mathcal{L}$, and hence reduce $\tau$ by $|\tri|$.
Although \Cref{step:crushKnot} might increase $\tau$ again by adding some new triangulations back into $\mathcal{L}$,
the new triangulations must have strictly fewer tetrahedra than $\tri$ because they were obtained by crushing a nontrivial normal surface.
Thus, $\tau$ must get strictly smaller each time we repeat \Cref{step:checkEmpty}.
This implies that $\tau$ must eventually reach $0$, at which point the algorithm terminates.

To see that the output is correct, we show that the following properties are satisfied each time the algorithm returns to \Cref{step:checkEmpty}:
\begin{enumerate}[(a)]
\item\label[propi]{property:listL}
Every item in $\mathcal{L}$ is an edge-ideal triangulation of a (possibly trivial, possibly composite) knot.
\item\label[propi]{property:listP}
Every item in $\mathcal{P}$ is an edge-ideal triangulation of a nontrivial prime knot.
\item\label[propi]{property:compose}
The input knot $K$ is given by composing all the knots in $\mathcal{L}\cup\mathcal{P}$.
\end{enumerate}
The first time we reach \Cref{step:checkEmpty}, $\mathcal{P}$ is empty and $\mathcal{L}$ consists of
nothing other than an edge-ideal triangulation of $K$, so these properties are trivially satisfied.
These properties possibly break each time we remove a triangulation from $\mathcal{L}$, so we need to check that \Cref{step:crushKnot} always
adds appropriate triangulations back into either $\mathcal{L}$ or $\mathcal{P}$ to ensure that all three properties are preserved.

We first consider the case where \Cref{step:crushKnot} finds a $2$-sphere with either $w_\ell(S) = 0$ or $w_\ell(S) = 2$.
In this case, we do not modify the list $\mathcal{P}$ at all, so \Cref{property:listP} is clearly preserved.
Moreover, by \Cref{lem:crushW=0,lem:crushW=2}, crushing $S$ produces one of the following results,
and in each case we can easily see that the algorithm preserves \Cref{property:listL,property:compose}:
\begin{description}
\item[Two edge-ideal triangulations $(\tri_1',\ell_1')$ and $(\tri_2',\ell_2')$ such that $\ell=\ell_1'\#\ell_2'$.]
\hfill\\
The algorithm adds both of these edge-ideal triangulations back into $\mathcal{L}$, which ensures that \Cref{property:listL,property:compose} are preserved.
\item[One edge-ideal triangulation $(\tri',\ell')$ such that $\ell'$ is topologically equivalent to $\ell$.]
\hfill\\
The edge-ideal triangulation is potentially accompanied by a single extra $3$-sphere component not containing any ideal loops, but the algorithm ignores any such component.
The algorithm adds $(\tri',\ell')$ back into $\mathcal{L}$, which ensures that \Cref{property:listL,property:compose} are preserved.
\item[No edge-ideal triangulations.]
\hfill\\
There could possibly be up to two $3$-sphere components without ideal loops, but the algorithm ignores these.
This case can only occur if $\ell$ is unknotted, in which case nothing needs to be done to preserve \Cref{property:listL,property:compose}.
\end{description}

On the other hand, if we do not find any suitable $2$-sphere in \Cref{step:crushKnot}, then it follows from \Cref{cor:quadVertexComposite} that $\ell$ must form a prime knot.
If $\ell$ is the unknot, then nothing needs to be done to preserve the three properties.
Otherwise, adding $(\tri,\ell)$ to $\mathcal{P}$ ensures that the three properties are preserved.

By induction, we conclude that when the algorithm terminates, \Cref{property:listL,property:listP,property:compose} must all still hold.
But $\mathcal{L}$ is empty, and hence the edge-ideal triangulations in $\mathcal{P}$ must give the prime factors of $K$, as required.
\end{proof}

\subsection{Optimisations of our implementation}\label{ssec:bottlenecks}

In practice, the following steps are potential bottlenecks for the algorithm:
\begin{description}
\item[\ref{app:bottleneck:buildIdealEdge}.]
Building the edge-ideal triangulation in \Cref{step:buildIdealEdge}.
\item[\ref{app:bottleneck:enumerate}.]
Enumerating quad vertex normal surfaces in \Cref{step:crushKnot}.
\item[\ref{app:bottleneck:nontriviality}.]
Checking nontriviality of the prime knots encountered in \Cref{step:crushKnot}.
\end{description}
Each of these bottlenecks can largely be overcome by augmenting the algorithm with various optimisations.
Our implementation of \Cref{algo:summands}, which includes all of the optimisations that we discuss below, is available at \url{https://github.com/AlexHe98/idealedge}.

The most obvious optimisation is to simplify triangulations before attempting any other computations. If we are working with an ideal triangulation of the knot complement, such as in \ref{app:bottleneck:nontriviality} below,
then we can directly use the simplification tools available in Regina. If we are working with an \emph{edge-ideal} triangulation of the knot, such as in \ref{app:bottleneck:enumerate},
then we must instead use custom simplification tools that are specifically designed to ensure that the ideal loop is preserved;
see \Cref{app:ssec:simplifyEdgeIdeal} for details.

Another set of optimisations is to attempt multiple techniques for solving the same problem in parallel. This is useful because there is often no way to know in advance which technique performs best in any particular case.
Specifically, our implementation uses the following measures.
\begin{enumerate}
\item\label{app:bottleneck:buildIdealEdge}
We use two different methods for turning a knot into an edge-ideal triangulation.
Here, we summarise both methods, and leave the details to \Cref{ssec:buildEdgeIdeal}.
	\begin{itemize}
	\item
	The first method is to directly construct an edge-ideal triangulation with $9n$ tetrahedra, where $n$ is the number of crossings in the knot diagram.
	This approach is guaranteed to terminate, but simplifying the resulting triangulation can be slow due to the relatively large number of tetrahedra.
	\item
	The second method is to triangulate the exterior of the knot, and then attempt to perform a $\frac{1}{0}$ Dehn filling to obtain the desired edge-ideal triangulation.
	Using techniques available in Regina~\cite{Regina}, this is often very fast in practice,
	but the running times are highly variable (indeed, Regina does not even guarantee termination);
	if we add SnapPy~\cite{SnapPy} as an extra dependency, then we can guarantee termination and get more consistent running times.
	Either way, this second method is usually faster than the first one.
	\end{itemize}
\item\label{app:bottleneck:enumerate}
In practice, we find that the quad vertex normal surface enumeration in \Cref{step:crushKnot} often terminates quickly.
This is particularly noticeable when the knot is composite, in which case we typically observe that the algorithm finds a suitable $2$-sphere to crush after enumerating only a handful of quad vertex normal surfaces
(which is usually just a tiny fraction of the total number of such surfaces).
However, occasionally we encounter a triangulation where the enumeration takes a long time;
this is usually because the simplification tools failed to simplify the ideal loop as much as possible.
Our remedy is to run the following in parallel with the enumeration: repeatedly randomise the triangulation, and check whether the new triangulation is one where the enumeration finishes quickly.
See \Cref{ssec:altEnum} for details.
\item\label{app:bottleneck:nontriviality}
We can conclusively determine whether a knot is nontrivial using solid torus recognition, but this can be slow because it relies on normal surface techniques.
In practice, we find that when we run the algorithm on a nontrivial knot, the crushing steps never produce unknotted ideal loops.
Thus, the main opportunity for optimisation here is to use fast heuristics that are usually good at certifying that a knot is nontrivial.
See \Cref{ssec:nontrivialInPractice} for more details.
\end{enumerate}

\subsection{Building an edge-ideal triangulation from a knot}\label{ssec:buildEdgeIdeal}

When the input knot $K$ is presented as a knot diagram, the first task in \Cref{algo:summands} is to convert this into an edge-ideal triangulation of $K$.
We now describe three techniques to achieve this task.
As mentioned in \Cref{ssec:bottlenecks}, our implementation attempts the first two of these techniques in parallel.

\paragraph*{Planar diagram construction}

The first technique takes a knot diagram with $n$ crossings, and guarantees to produce an edge-ideal triangulation of size $9n$ with an ideal loop of length $2n$.
This is excellent in theory since the cost of the conversion is only linear, but as we discuss shortly $9$ is a large enough constant to be a challenge in practice.

To describe the construction, view the input knot diagram as a $4$-regular graph embedded in a $2$-sphere $S$,
with over- and under-crossing information recorded at each vertex of the graph.
The dual graph decomposes $S$ into $4$-sided faces, with one crossing per face.
Viewing the ambient $3$-sphere as a suspension of $S$ (i.e., two cones over $S$, glued together along $S$),
we obtain a decomposition of the $3$-sphere into $4$-sided footballs: each football is a suspension of one of the $4$-sided faces of $S$.
We build an edge-ideal triangulation of the knot by triangulating each football with $9$ tetrahedra,
with two edges corresponding respectively to the over- and under-crossing strands inside that football;
see \Cref{fig:crossingGadget}.

\begin{figure}[htbp]
\centering
	\begin{tikzpicture}
	\node at (0,0) {
		\includegraphics[scale=1]{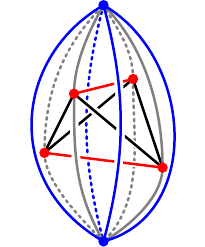}
	};
	\node at (6,0) {
		\includegraphics[scale=1]{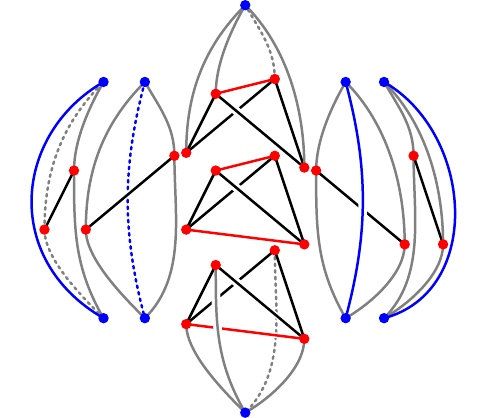}
	};
	\end{tikzpicture}
\caption{The $9$ tetrahedron crossing gadget.
Left: The fully assembled gadget; the two red edges correspond to the over- and under-crossing strands.
Right: Exploded view that shows the individual tetrahedra more clearly.}
\label{fig:crossingGadget}
\end{figure}

Although this construction is fast to perform, the size of the resulting triangulation makes
subsequent computations challenging in practice, so it becomes necessary to simplify the triangulation.
Using combinatorial moves, our implementation guarantees to reduce the length of the ideal loop to $1$, and is usually successful at significantly reducing the size of the triangulation.
However, this simplification is sufficiently slow in practice that it is often a bottleneck for our implementation.

\paragraph*{Dehn filling construction}

To circumvent the bottleneck of simplifying the triangulation given by the first technique,
our second technique seeks to directly construct a very small edge-ideal triangulation.
As we discuss below, if we use only the tools available in Regina~\cite{Regina}, then this
technique involves a heuristic step that does not guarantee to terminate.

Having said this, we \emph{can} guarantee termination if we also use tools from
SnapPy~\cite{SnapPy}. This is not currently included in the \texttt{main} branch of our
implementation, and we do not discuss this in further detail here. Instead, we invite the
interested reader to study the \texttt{snappy} branch of our implementation, where this
feature is included.

Given a knot diagram, our second technique for constructing an edge-ideal triangulation proceeds as follows:
\begin{enumerate}
\item
Build an ideal triangulation of the knot.
There are already longstanding implementations for this in software (such as SnapPy~\cite{SnapPy} and Regina~\cite{Regina}).
\item
Turn the ideal triangulation into a triangulation of the knot exterior with one-vertex boundary.
This can be done by suitably subdividing the tetrahedra so that we can delete a small neighbourhood of the ideal vertex,
and then simplifying the boundary triangulation until it is one-vertex.
Again, Regina~\cite{Regina} already provides the tools to do all this.
\item
Modify the bounded triangulation so that the meridian and longitude of the knot appear as boundary edges.
Regina~\cite{Regina} provides a \emph{heuristic} method for realising this, so it is this step that is not guaranteed to terminate.
\item
To complete the construction, attach a snapped $3$-ball (a gadget built from a single tetrahedron) to ensure that the meridian edge bounds an embedded disc.
Topologically, this realises a $\frac{1}{0}$ Dehn filling, and the longitude edge becomes the ideal loop.
\end{enumerate}

In practice, this heuristic construction is very fast in most cases, and therefore substantially ameliorates the practical disadvantages of the first technique.
However, we do also observe a small proportion of cases where the heuristic is prohibitively slow;
in such cases, the first technique becomes crucial, providing a guaranteed and reasonable upper bound on the running time required to perform the conversion.
By running both techniques in parallel, our implementation thereby gets the best of both worlds.

\paragraph*{$H$-triangulations}

Finally, although we do not use it, we briefly mention a third technique, which follows a method described in  \cite{ibarra2024Htriangulation}.
There, the authors use augmented link diagrams to construct a special ideal triangulation of the knot complement,
which can then be turned into a triangulation of $\mathbb{S}^3$ by adding a single tetrahedron.
The result is referred to as a $H$-triangulation in \cite{aribi2023Htriangulation,ibarra2024Htriangulation},
but can also be interpreted as an edge-ideal triangulation of the knot.

\subsection{Simplifying edge-ideal triangulations}\label{app:ssec:simplifyEdgeIdeal}

Our aim with simplification is to reduce the following (sometimes competing) quantities:
the length of the ideal loop $\ell$ (i.e., the number of edges), the number of vertices in the ambient triangulation $\tri$, and the size of $\tri$ (i.e., the number of tetrahedra).
For reducing the length of $\ell$, we use the following operations:
\begin{description}
\item[Redirecting $\ell$ across a triangle $f$ of the triangulation.]
\hfill\\
This operation is available if $\ell$ has two (consecutive) edges that are both incident to $f$.
In this case, replacing these two edges by the third edge of $f$ reduces the length of $\ell$ by one without changing $\ell$ topologically.
\item[Inserting a snapped ball that effectively collapses an edge of $\ell$.]%
\hspace{-3pt}\footnote{Inserting a snapped ball is dual to Matveev's arch construction for special spines;
see~\cite[Section 1.1.4]{matveev2007algorithmic}.}%
\hfill\\
This operation is available along any internal edge $e$ connecting two distinct vertices of $\tri$
(so, in particular, it is available along any edge of $\ell$ when $\ell$ has length greater than one).
Fix any particular triangle $f$ incident to $e$, and let $v_0$ and $v_1$ denote the vertices at the endpoints of $e$.
Also, let $\varepsilon_0$ and $\varepsilon_1$ denote the two edges of $f$ other than $e$, labelled so that $\varepsilon_i$ is incident to $v_i$.
The operation proceeds by undoing the face-gluing along $f$, and inserting a snapped $3$-ball in such a way that $\varepsilon_0$ and $\varepsilon_1$ become identified
(see \Cref{fig:snapEdge});
since the endpoints of $e$ are distinct, the overall operation does not change the ambient $3$-manifold.
Combinatorially, this operation reduces the number of vertices by one, but at the cost of increasing the size of $\tri$ by one.

There is a way to realise this operation as a composition of two other well-known operations.
The first step of the composition is to pinch the endpoints of $e$ together by inserting Jeff Weeks' two-tetrahedron triangular-pillow-with-tunnel
(in Regina, this is implemented as the \texttt{pinchEdge()} routine in the \texttt{Triangulation<3>} class).
The inserted triangular pillow always admits a 2-1 edge move that yields the same overall result as inserting a snapped ball in the manner described above.
\end{description}

\begin{figure}[htbp]
\centering
	\includegraphics[scale=1]{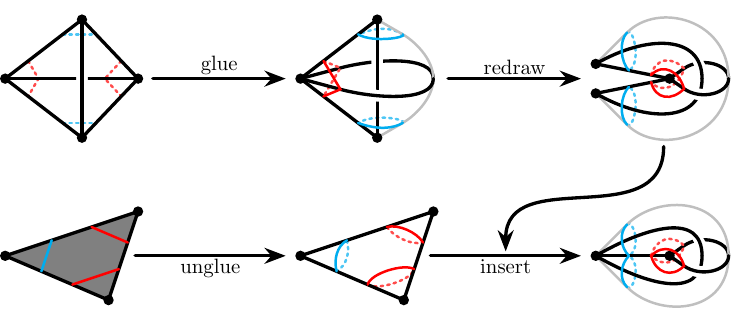}
\caption{Shortening the ideal loop: This is equivalent to performing Jeff Week's insertion of a triangular pillow with tunnel, followed by a 2-1 edge move. The ideal edge is the rightmost edge on the shaded triangle at the bottom. Endpoints of this edge must be distinct.}
\label{fig:snapEdge}
\end{figure}

As described above, we can insert snapped balls to identify any two distinct vertices that are joined together by an edge;
thus, we can (and do) also use this operation to reduce $\tri$ to a one-vertex triangulation (but, again, at the cost of increasing the size of $\tri$).
To reduce the size of $\tri$, we use a combination of the following standard moves for triangulations:
3-2 moves, 2-0 edge moves, 2-1 edge moves, and 4-4 moves.
We require bespoke implementations of these triangulation moves to keep track of the location of the ideal loop after performing each move.
The order in which the moves are performed is inspired by the simplification routines from both SnapPea \cite{SnapPy} and Regina \cite{Regina}.
The interested reader is invited to peruse the source code to see how this works in detail.

\subsection{Randomisation and alternate enumerations}\label{ssec:altEnum}

Empirically, we observe that simplifying an edge-ideal triangulation of a composite knot typically leads to
a triangulation in which enumerating quad vertex normal surfaces quickly finds a decomposing $2$-sphere.
When we get unlucky and encounter a triangulation in which the enumeration does not finish quickly,
we find that it is often helpful to run an alternate enumeration on a different triangulation.

In detail, to search for quad vertex $2$-spheres, as specified in \Cref{step:crushKnot} of \Cref{algo:summands}, our implementation runs the following three procedures in parallel:
\begin{itemize}
\item
Given an edge-ideal triangulation $(\tri,\ell)$, search for a $2$-sphere $S$ with either
$w_\ell(S)=0$ or $w_\ell(S)=2$ by enumerating quad vertex normal surfaces.
This main enumeration is guaranteed to terminate, but can take a long time.
\item
Repeatedly randomise $(\tri,\ell)$ by first using random 2-3 moves to significantly increase the number of tetrahedra, and then trying to simplify again.
For each randomised triangulation $(\tri',\ell')$, if $\tri'$ has at most as many tetrahedra as the original triangulation $\tri$,
then send $(\tri',\ell')$ to a parallel process running alternate enumerations (see below).
\item
Each time we receive a new randomised triangulation $(\tri',\ell')$, terminate the current alternate enumeration (if there is one),
and start a new alternate quad vertex normal surface enumeration on $\tri'$.
If any of the alternate enumerations ever finds a suitable $2$-sphere $S$, then we can cancel the main enumeration early and proceed using $S$.
\end{itemize}
In practice, whenever the main enumeration happens to take a long time, we usually find that after only a handful of randomisation attempts,
one of the alternate enumerations is able to find a suitable $2$-sphere $S$.

\subsection{Detecting nontrivial knots in practice}\label{ssec:nontrivialInPractice}

In \Cref{step:crushKnot}, \Cref{algo:summands} needs to decide whether each prime knot it finds is nontrivial.
This can always be done using solid torus recognition.
Although Regina's implementation of solid torus recognition~\cite{Regina,burton2012unknot} is heavily optimised, it can nevertheless be a bottleneck.

Experiments suggest that, provided we simplify our triangulations, we never crush a $2$-sphere that splits off a trivial component.
In other words, if the initial input knot is nontrivial, then in practice we expect that we \emph{only} encounter edge-ideal triangulations of nontrivial knots.
Therefore, in practice, we should focus on getting fast verification of nontriviality.
Our implementation does this using the following tests:
\begin{itemize}
\item
We can pinch the ideal loop down to an ideal vertex, and attempt to certify hyperbolicity (and hence nontriviality) using strict angle structures;
this is very fast because it can be done purely using linear programming.
Even better, since hyperbolic knots are always prime, we can perform this check \emph{before}
attempting to use quad vertex normal surfaces to certify primeness.
Also, to avoid indiscriminately searching for strict angle structures on every single knot that the algorithm encounters,
we can use SnapPea to identify good candidate knots to check for hyperbolicity.
\item
We can try to certify that the fundamental group $\pi_1$ is not $\mathbb{Z}$ by enumerating transitive permutation representations --
in other words, enumerating transitive group actions of $\pi_1$ on a $k$-element set, for some $k \geqslant 2$.
For $\pi_1 \simeq \mathbb{Z}$, there is (up to conjugacy) only one such action for each $k$, and the stabiliser is isomorphic to $\mathbb{Z}$.
Thus, if we find that there is more than one transitive action, or we can find a transitive action whose stabiliser is not isomorphic to $\mathbb{Z}$
(which Regina is often able to certify by verifying that the \emph{abelianisation} of the stabiliser is not isomorphic to $\mathbb{Z}$),
then we know that $\pi_1$ cannot be $\mathbb{Z}$, and hence the knot must be nontrivial.
Regina is able to enumerate transitive actions for each $k \leqslant 7$, and this is very fast for $k \leqslant 6$.
Thus, our implementation always performs the enumerations for $k \leqslant 6$ before attempting solid torus recognition;
if this fails, then we run solid torus recognition in parallel with the enumeration for $k = 7$.
\end{itemize}

\subsection{Experimental results}\label{ssec:experiment}

In this section we demonstrate that \Cref{algo:summands}
is practical -- and in some cases even quite efficient -- for typical inputs.
We do this by testing our implementation on knots obtained from a range of sources.
The code and the experimental data can be accessed at \url{https://github.com/AlexHe98/idealedge}.

Although the design of our algorithm is best-suited for composite knots, we also test the performance on prime knots.
There are two reasons for this:
\begin{itemize}
\item
First, if we are interested not just in deciding whether a knot is composite, but also in obtaining the full prime factorisation, then our algorithm still needs to certify that each factor is prime.
Therefore, the performance in the prime case is relevant even if the input knot is composite.
\item
Second, empirical experience in other settings suggests that working with quad (rather than standard) coordinates is often crucial for practical performance.
This is relevant in our setting regardless of whether the input knot is prime or composite.
\end{itemize}
All of our experiments were run on a laptop with an Intel Core i5-7200U processor.

For prime knots, \Cref{algo:summands} certified primeness of:
\begin{enumerate}[(a)]
\item
all $122$ torus knots with $15$ to $100$ crossings in $310.02$ seconds ($2.54$ seconds per knot)
\item
all $376$ satellite knots from the census~\cite{burton2020knots} of prime knots with $15$ to $19$ crossings in $404.66$ seconds ($1.08$ seconds per knot)
\item
a random sample of $100$ hyperbolic knots with $15$ or $16$ crossings in $138.48$ seconds ($1.38$ seconds per knot)
\end{enumerate}
Note that for hyperbolic knots, efficient heuristics for verifying hyperbolicity essentially always succeed in practice, and so the crushing part of our algorithm is rarely called upon in such cases.

For composite knots, we randomly generated knot diagrams in two ways.
First, we randomly sampled prime knots from the census, and used the standard construction to form composites with up to $8$ prime summands and $152$ crossings.
Timings for these composite knots are summarised in the table below.

\begin{center}
\begin{tabular}{c|c|c}
  $\#$ prime summands & $\#$ of samples & avg. running time per knot \\
  \hline
  \hline
  $2$ & $100$ & $4.71$ \\
  \hline
  $3$ & $100$ & $10.95$  \\
  \hline
  $4$ & $100$ & $24.06$ \\
  \hline
  $5$ & $100$ & $33.06$ \\
  \hline
  $6$ & $10$ & $47.75$ \\
  \hline
  $7$ & $10$ & $70.21$ \\
  \hline
  $8$ & $10$ & $88.68$ 
\end{tabular}
\end{center}

Second, we generated $30$ composite knots with two $15$-crossing prime summands, but using
a non-standard construction yielding diagrams that ``look prime''. More specifically, the
construction ``overlays'' braid representations of the two prime summands and then uses
SnapPy~\cite{SnapPy} to simplify the resulting knot diagram (see the source code for
details). Although SnapPy often succeeds in sufficiently simplifying the diagram to see
that the knot is composite, simplifications fail often enough to yield a reasonable
(though somewhat slow) method to generate hard diagrams of composite knots. For the $30$
knot diagrams generated this way, \Cref{algo:summands} factorised all of them in $124.82$
seconds ($4.16$ seconds per knot). Strikingly, these running times are comparable to the
running times using standard diagrams for composite knots as input. This suggests that
primeness of the input diagram only has a limited effect on running times after converting
the input into an edge-ideal triangulation.

In addition to the experiments on knot diagrams, we also used the random-walk algorithm
from~\cite{AltmannMCMC} to sample one-vertex triangulations of $3$-spheres, and look at
the knot types of their edges. Since unknotted edges are extremely common, we increase the
variety of knots encountered by starting the random walk at a triangulation with no
unknotted edges from~\cite{burton2023counterexamples}. We sampled $285$ knots in
triangulations of size $25$--$30$.
Our implementation only needed $65.28$ seconds to compute the prime factorisations of all $285$ of these knots ($0.23$ seconds per knot).
These
small running times are not surprising, since \emph{(a)} the sampled triangulations are
relatively small, and \emph{(b)} we avoid the (usually nontrivial) work of constructing
edge-ideal triangulations in this setting.

\section{\texorpdfstring{\NP}{NP} certificate for composite knots}\label{sec:np}

We now adapt our techniques to prove \Cref{thm:summandsNP}:
\Cref{problem:summands} is in \NP, and hence \textsc{Composite Knot Recognition} is in \NP.
Recall that \Cref{problem:summands} asks whether a given knot $K$ has at least $k$ summands in its prime factorisation.
At a high level, our certificate is very natural:
it consists of a sequence of $2$-spheres that decompose $K$ (via crushing) into
$k$ nontrivial summands, together with certificates that each of the $k$ summands is indeed nontrivial.

In detail, suppose we are given an $n$-crossing diagram of a knot $K$.
If the prime factorisation of $K$ has at least $k$ summands, then we claim that we can
verify this fact in polynomial time in $n$ using a certificate consisting of the following data:
\begin{itemize}
\item
A sequence $(\tri_0,L_0), \ldots, (\tri_m,L_m)$ of edge-ideal triangulations, satisfying the following invariant:
for each $i\in \{0,\ldots,m\}$, $(\tri_i,L_i)$ is a union of edge-ideal triangulations of knots in $\mathbb{S}^3$,
such that the input knot $K$ can be obtained by composing all of these knots.
\item
For each $i \in \{0, \ldots, m-1\}$, a component $(C_i,\ell_i)$ of $(\tri_i,L_i)$, together with a quad vertex normal
$2$-sphere $S_i$ in $C_i$ such that either $w_{\ell_i}(S_i) = 0$ or $w_{\ell_i}(S_i) = 2$.
\item $k$ components $\ell_{m_0}, \ldots, \ell_{m_{k-1}}$ of $L_m$, together with certificates that each $\ell_{m_i}$ forms a nontrivial knot in its component.
\end{itemize}

The key idea is that the invariant can be efficiently verified in an iterative fashion.
Then, after verifying that the invariant holds for $L_m$, all that remains is to verify that $L_m$ has $k$ nontrivial components;
by uniqueness of prime factorisation, this is enough to verify that the input knot $K$ has at least $k$ summands.
We give the full proof details in \Cref{ssec:proofNP}, but for now we simply outline the polynomial-time verification algorithm:
\begin{enumerate}
\item\label[stepi]{step:buildEdgeIdealNP}
Follow \Cref{ssec:buildEdgeIdeal} to build an edge-ideal triangulation $(\tri_0',L_0')$ of $K$,
and then verify that $(\tri_0',L_0')$ is combinatorially isomorphic to $(\tri_0,L_0)$. Note that checking combinatorial isomorphy of triangulations is a polynomial-time procedure
(see, e.g., \cite[Lemma~3.1 and Remark~3.2]{schleimer2011sphere}).
\item\label[stepi]{step:quadVertexNP}
For each $i \in \{0, \ldots, m-1\}$, check that either $w_{\ell_i}(S_i) = 0$ or $w_{\ell_i}(S_i) = 2$.
Also, follow \Cref{ssec:certifyQuadVertexSphere} to verify that $S_i$ is a quad vertex $2$-sphere.
\item\label[stepi]{step:crushNP}
For each $i \in \{0, \ldots, m-1\}$, crush $S_i$ to obtain a new triangulation $\tri_{i+1}'$, and then follow \Cref{sec:identifyingIdealEdges}
(recall that this uses work of Agol, Hass and Thurston~\cite{agol2006computational}) to identify the new collection $L_{i+1}'$ of ideal loops.
Verify that $(\tri_{i+1}',L_{i+1}')$ is combinatorially isomorphic to $(\tri_{i+1},L_{i+1})$.
\item\label[stepi]{step:nontrivialNP}
Follow \Cref{ssec:certifyNontrivial} to verify that each of $\ell_{m_0}, \ldots, \ell_{m_{k-1}}$ forms a nontrivial knot.
\end{enumerate}

\subsection{Certifying quad vertex \texorpdfstring{$2$}{2}-spheres}\label{ssec:certifyQuadVertexSphere}

Since our \NP\ certificate involves a sequence of quad vertex $2$-spheres that need to be crushed,
the verifier needs to be convinced that each $2$-sphere is actually quad vertex, and hence that the lemmas in \Cref{ssec:crushIdealLoop} apply.
Here, we describe how to verify in polynomial time that a given normal surface $S$ is a quad vertex $2$-sphere.

First, we can easily check that $\chi(S) = 2$.
We can also explicitly construct the induced orbits of type-$1$ segments to verify that they are trivial.
What remains is to verify that $S$ is connected, and that the induced orbits of type-$2$ segments are also trivial.
We claim that we can check both of these at once by simply counting the number of components of the \textbf{guts} --
i.e., the components given by cutting along $S$ and then deleting all the parallel cells;
note that it is straightforward to count these components in polynomial time, because a normal surface in a $t$-tetrahedron triangulation induces at most $6t$ non-parallel cells.

To see that this count suffices, first observe that if $S$ is a quad vertex $2$-sphere, then the guts must have exactly two components:
cutting along $S$ yields two components (because every $2$-sphere in $\mathbb{S}^3$ is separating), and then deleting the parallel cells does not change
the number of components because the induced orbits of the type-$2$ segments are all trivial.

Conversely, suppose the guts have exactly two components.
We claim that $S$ must be a $2$-sphere, and that all the type-$2$ segments must have trivial induced orbits.
To see why, let $c$ be the number of normal isotopy classes amongst the components of $S$.
Since the $3$-sphere does not contain any non-separating surfaces, observe that after
cutting along $S$, exactly $c+1$ of the resulting components contain at least one non-parallel cell.
Thus, after deleting all the parallel cells, we see that the guts must have at least $c+1$ components.
Since $S$ is nonempty, we therefore conclude that $c=1$;
in other words, the components of $S$ (if there is more than one) must all be normally isotopic to each other.
But we already know that $\chi(S)=2$, so $S$ must consist entirely of a single $2$-sphere component.
Finally, observe that if any type-$2$ segment has nontrivial induced orbit, then deleting the parallel cells in this orbit disconnects one of the components given by cutting along $S$, which forces the guts to have more than two components.
Thus, $S$ must be a quad vertex $2$-sphere.

\subsection{Certifying nontrivial ideal loops}\label{ssec:certifyNontrivial}

We use Lackenby's work from \cite{lackenby2016efficient} to certify knottedness of nontrivial ideal loops. We can do this in one of two possible ways.
\begin{enumerate}
    \item The seemingly more straightforward approach is to use the certificate for knottedness in \cite[Theorems~1.1 and~1.5]{lackenby2016efficient},
    which can be verified in time polynomial in the number of crossings in the input knot diagram.
    However, we cannot use this result directly, since we cannot easily and efficiently convert our edge-ideal triangulations into knot diagrams.
    Fortunately, the very first step in \cite{lackenby2016efficient} is to build a triangulation of the knot exterior with number of tetrahedra linear in the crossing number of the input diagram.
    Hence, the certificate from \cite{lackenby2016efficient} can also be verified in time polynomial in the size of a triangulation of the knot exterior.
    Converting an edge-ideal triangulation to a triangulation of the knot exterior is straightforward,
    but some care is needed to represent the homological longitude of the knot exterior, which is part of the input of \cite[Theorem~1.5]{lackenby2016efficient}.
    \item Alternatively, we can use \cite[Theorem~1.7]{lackenby2016efficient} out of the box.
    This theorem states that checking whether a $3$-manifold has incompressible boundary is in {\NP}.
    For knot exteriors, checking this condition is equivalent to checking whether the knot is nontrivial.
\end{enumerate}

\subsection{Correctness of the \NP\ certificate}\label{ssec:proofNP}

We now prove that if $K$ has least $k$ summands then a suitable certificate always exists (and hence the verification algorithm succeeds),
and conversely that if the verification algorithm succeeds then $K$ must have at least $k$ summands.

As mentioned earlier, the key is that the sequence $(\tri_0,L_0),\ldots,(\tri_m,L_m)$ from the certificate must satisfy the following invariant:
for each $i\in \{0,\ldots,m\}$, the input knot $K$ can be obtained by composing all the knots given by the components of $(\tri_i,L_i)$.
This invariant is essentially a consequence of \Cref{lem:crushW=0,lem:crushW=2}.
In detail, recall that for each $i\in \{0,\ldots,m-1\}$, the certificate specifies a particular component $(C_i,\ell_i)$ of $(\tri_i,L_i)$,
together with a quad vertex $2$-sphere $S_i$ in $C_i$ such that either $w_{\ell_i}(S_i)=0$ or $w_{\ell_i}(S_i)=2$.
By \Cref{lem:crushW=0,lem:crushW=2}, crushing $S_i$ replaces $(C_i,\ell_i)$ with either:
a new edge-ideal triangulation of the same knot, or a pair of edge-ideal triangulations giving a pair of knots obtained by a (possibly trivial) connected sum decomposition of $\ell_i$.
Thus, if the invariant holds for $L_i$, then it also holds for $L_{i+1}$.

With this inductive argument in mind, suppose the prime factorisation of $K$ has at least $k$ summands.
We produce the required certificate via the following procedure:
\begin{enumerate}
\item
Following the construction from \Cref{ssec:buildEdgeIdeal}, build an edge-ideal triangulation $(\tri_0,L_0)$ of $K$, so that the invariant holds trivially for $L_0$.
Note that $\tri_0$ has $9n$ tetrahedra, and $L_0$ consists of a single ideal loop $\ell_0$ of length $2n$.
\item
Set $i = 0$.
(The idea is to inductively construct edge-ideal triangulations $(\tri_i,L_i)$ until we reach some
$m \geqslant 0$ such that among the components of $(\tri_m,L_m)$, we have $k$ ideal loops forming nontrivial knots.)
\item\label[stepi]{step:certificateLoop}
If $L_i$ includes at least $k$ nontrivial components, then use \Cref{ssec:certifyNontrivial} to obtain certificates of nontriviality;
this completes the construction of the certificate, so set $m = i$ and \textbf{terminate}.
Otherwise, since the invariant holds for $(\tri_i,L_i)$, and since the prime factorisation of $K$ is unique, the components of $(\tri_i,L_i)$ cannot all be prime knots.
Thus, we may fix a component $(C_i,\ell_i)$ giving a composite knot.
\item
Since $\ell_i$ is composite, \Cref{lem:nontrivialS2} gives a quad vertex normal $2$-sphere $S_i$ such that either $w_{\ell_i}(S_i) = 0$ or $w_{\ell_i}(S_i) = 2$.
Because $S_i$ is quad vertex, the number of bits needed to encode its coordinates is polynomial in $n$.
\item
Crush $S_i$ to obtain a new edge-ideal triangulation $(\tri_{i+1},L_{i+1})$.
By the inductive argument above, the invariant holds for $(\tri_{i+1},L_{i+1})$.
Note also that $|\tri_{i+1}| < |\tri_i|$ and $|L_{i+1}| \leqslant |L_i| + 2$, where $|L_i|$ is the number of edges in $L_i$.
Increase $i$ by $1$ and \textbf{return to \Cref{step:certificateLoop}}.
\end{enumerate}
Observe that the verification algorithm will succeed if it is passed a certificate constructed in this way.

For the converse, suppose the verification algorithm succeeds.
We use the invariant to show that the prime factorisation of $K$ must have at least $k$ summands.
Since \Cref{step:buildEdgeIdealNP} succeeds, $(\tri_0,L_0)$ is itself an edge-ideal triangulation of $K$, and hence the invariant holds for $i=0$.
Since \Cref{step:quadVertexNP,step:crushNP} succeed, the above inductive argument applies, and hence the invariant holds for all $i\in \{0,\ldots,m\}$.
\Cref{step:nontrivialNP} then verifies that components of $(\tri_m,L_m)$ include $k$ nontrivial knots.
By uniqueness of the prime factorisation of $K$, this therefore verifies that $K$ has at least $k$ summands.

\section{Triangulation complexity}\label{sec:further}

In this section we leverage the fact that crushing a nontrivial normal surface reduces the size of its ambient triangulation to study \textbf{triangulation complexity} --
i.e., the minimum number of tetrahedra necessary to triangulate a given manifold within a fixed class of triangulations.

\subsection{Triangulation complexity of edge-ideal triangulations.}

Let $\mathcal{M}$ be the exterior of a knot $K$ in $\mathbb{S}^3$ and let $\tilde{c}(\mathcal{M})$ be the triangulation complexity of $\mathcal{M}$ over the class of edge-ideal triangulations.
We begin with an observation in the case where $K$ is prime:

\begin{lemma}\label{lem:minEdgeIdealPrime}
Let $K$ be a prime knot. If $(\tri,\ell)$ is an edge-ideal triangulation of $K$ that
realises the edge-ideal complexity of $\manifold = \mathbb{S}^3\setminus K$ (i.e.,
$|\tri| = \tilde{c}(\mathcal{M})$), then the ideal loop $\ell$ must have length one.
\end{lemma}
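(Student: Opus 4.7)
The plan is to prove this by contradiction: suppose $(\tri,\ell)$ realises $\tilde c(\manifold)$ but that $\ell$ has length $m \geq 2$. I will exhibit a strictly smaller edge-ideal triangulation of $K$, contradicting minimality.

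The overall strategy is to invoke the crushing framework from \Cref{sec:identifyingIdealEdges}. Specifically, I aim to find a quad vertex normal $2$-sphere $S$ in $\tri$ with $w_\ell(S)\in\{0,2\}$, and then to crush it. By \Cref{lem:crushW=0,lem:crushW=2}, crushing strictly decreases the number of tetrahedra. Moreover, because $K$ is prime, the decomposition $K = K_1 \# K_2$ produced by crushing a sphere with $w_\ell(S) = 2$ must have one of the $K_i$ trivial; after discarding components that either contain no ideal loop or represent only a trivial summand, what remains is still an edge-ideal triangulation of $K$, of size strictly less than $|\tri|$, contradicting minimality.

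The main task is therefore to exhibit a \emph{nontrivial} normal $2$-sphere with $w_\ell\in\{0,2\}$; \Cref{lem:quadVertex} then upgrades this to a quad vertex representative. Since $\ell$ has length at least two, any vertex $v\in\ell$ is incident to two distinct ideal edges $\ell_1,\ell_2$, and the vertex link $L_v$ is a (trivial) normal $2$-sphere with $w_\ell(L_v)=2$; this establishes at least that the set of normal $2$-spheres with $w_\ell=2$ is nonempty, but $L_v$ contains no quads. To promote this to a nontrivial sphere, I would start from a topologically embedded $2$-sphere $\tilde S$ enclosing a larger region than a single vertex-linking ball --- for instance, a small regular neighbourhood of an entire ideal edge $\ell_1$, together with both of its endpoints --- so that $w_\ell(\tilde S)=2$, with both intersection points on the two ideal edges ``just outside'' $\ell_1$. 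I would then normalise $\tilde S$. Because normalisation cannot push the surface across $\ell$ (as argued in the proof of \Cref{lem:nontrivialS2}), the weight $w_\ell=2$ is preserved, and because $\tilde S$ encloses two distinct vertices rather than one, the normalisation should not be able to fully collapse to a single vertex link without some genuine rearrangement of elementary discs.

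The hard part will be this last step: rigorously guaranteeing that some component of the normalised surface contains at least one quad. The danger is that the normalisation collapses all the way down to $L_u$ or $L_v$ plus vertex links of vertices disjoint from $\ell$, in which case no quads appear. Overcoming this likely requires either a careful case analysis based on the local structure of $\tri$ near $\ell_1$ (adaptively choosing the initial topological sphere $\tilde S$), or an Euler-characteristic counting argument exploiting the constraint that the two vertices $u,v$ enclosed by $\tilde S$ cannot both be accounted for by a single vertex link. Once this is established, \Cref{lem:quadVertex} supplies a quad vertex representative, and \Cref{lem:crushW=0,lem:crushW=2} deliver the required contradiction.
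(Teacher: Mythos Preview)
Your overall strategy matches the paper's: assume $\ell$ has length at least two, take $\tilde S$ to be the boundary of a regular neighbourhood of an ideal edge $e$, normalise to get a nontrivial normal $2$-sphere with $w_\ell\in\{0,2\}$, upgrade via \Cref{lem:quadVertex}, and crush via \Cref{lem:crushW=0,lem:crushW=2}. The gap you yourself flag is real, and your appeal to \Cref{lem:nontrivialS2} does not close it. That argument relies on the corresponding annulus being essential (admitting no $\partial$-compression), which is exactly what fails here: the annulus coming from $\tilde S$ is $\partial$-parallel in the knot exterior, so nothing you have cited prevents normalisation from pushing $\tilde S$ across $\ell$ and collapsing it to a vertex link. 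Your suggested remedies (Euler-characteristic counts, local case analysis) do not obviously lead anywhere, because the obstruction is global.

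The paper closes the gap with an argument that genuinely uses nontriviality of $K$. Call an embedded $2$-sphere $F$ a \emph{successor} of $\tilde S$ if it is disjoint from $e$ and ambient-isotopic to $\tilde S$ via an isotopy fixing $\ell$ setwise. The two poles of any successor split $\ell$ into an inner arc (through $e$) and an outer arc; the defining isotopy shows the inner arc cobounds an embedded disc with every longitude of $F$, and then Dehn's lemma plus nontriviality of $K$ forces the outer arc \emph{not} to cobound such a disc. One now checks that each normalisation step preserves the existence of a successor component: compressions split one successor into two spheres one of which is still a successor, $0$-weight deletions cannot touch a successor, and a weight-reducing isotopy that removed a pole would require either increasing the weight on $e$ (impossible during normalisation) or producing a disc cobounded by the outer arc and a longitude (just ruled out). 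Finally, a normal successor cannot be a vertex link $L_v$: such a link has $w_\ell(L_v)=2$ only if $v\in\ell$, and disjointness from $e$ forces $v$ off $e$, whence the short arc through $v$ is the outer arc and visibly cobounds a disc with a longitude of $L_v$ --- contradicting the successor property. This yields the nontrivial normal $2$-sphere you need.

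Two further omissions: the argument above requires $K$ nontrivial, so the unknot must be handled separately (the paper does this by a direct one-tetrahedron census check); and when you ``discard components that represent only a trivial summand'' after crushing, you should note that the nontrivial summand necessarily falls under \Cref{case:loopSurvives} of \Cref{lem:crushW=2}, so an edge-ideal triangulation of $K$ really does persist.
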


\begin{proof}
First, if $K$ is the unknot, then one can check from the census of closed $3$-manifold triangulations that $\tilde{c}(\mathbb{S}^3\setminus K) = 1$.
In fact, in both of the one-tetrahedron triangulations of $\mathbb{S}^3$, all of the unknotted ideal loops have length one, and so the lemma holds for the unknot.
Thus, for the rest of the proof, we assume that $K$ is a nontrivial prime knot.

Let $(\tri,\ell)$ be an edge-ideal triangulation of $K$, and suppose $\ell$ has length greater than one.
Fix an edge $e$ in $\ell$, and note that the endpoints of $e$ must be distinct.
Thus, a small regular neighbourhood of $e$ is bounded by an embedded $2$-sphere $S$ with $w_\ell(S)=2$.
We claim that we can turn $S$ into a nontrivial normal $2$-sphere $S'$ with $w_\ell(S')=2$.
The result then follows easily from this claim:
\Cref{lem:quadVertex} gives a quad vertex $2$-sphere $Q$ such that either $w_\ell(Q)=0$ or $w_\ell(Q)=2$,
and so \Cref{lem:crushW=0,lem:crushW=2} tell us that crushing $Q$ yields a new edge-ideal triangulation of $K$ with fewer tetrahedra than $\tri$.

All that remains is to justify the claim.
We begin by introducing some helpful ad hoc terminology.
Call any embedded $2$-sphere $F$ in $\tri$ a \textbf{successor} of $S$ if:
\begin{itemize}
\item
$F$ is disjoint from the ideal edge $e$; and
\item
$F$ can be related to $S$ via an ambient isotopy fixing the ideal loop $\ell$ (as a set).
\end{itemize}
Note that such a successor $F$ must intersect $\ell$ in exactly two points;
call these points the \textbf{poles} of $F$, and call any arc in $F$ joining one pole to the other a \textbf{longitude} of $F$.
The poles subdivide $\ell$ into two arcs:
an \textbf{inner} arc containing the ideal edge $e$, and an \textbf{outer} arc.
Recalling that $S$ is the $2$-sphere bounding a small regular neighbourhood of $e$,
we use the defining ambient isotopy between $F$ and $S$ to see that the inner arc cobounds an embedded disc with any longitude of $F$.
This implies that the outer arc \emph{cannot} cobound a disc with any longitude of $F$ (using Dehn's lemma, together with the fact that the ideal loop $\ell$ gives a nontrivial knot).

With all this in mind, it suffices to find a nontrivial normal $2$-sphere $S'$ that is a successor of $S$.
We do this by normalising $S$, and explaining why the resulting normal surface must have a component that is a successor of $S$.
We argue inductively, using the fact that the normalisation of $S$ can be broken down into a finite sequence of the following operations:
\begin{itemize}
\item
surgery along a (necessarily inessential) compression disc disjoint from the $1$-skeleton of~$\tri$;
\item
ambient isotopy that reduces the weight of an edge; and
\item
deletion of $2$-sphere components disjoint from the $1$-skeleton of $\tri$.
\end{itemize}
The deletions of $2$-sphere components clearly preserve the existence of a successor (since a successor intersects the $1$-skeleton at the poles).
The surgeries split one ``old'' $2$-sphere component into two ``new'' $2$-sphere components;
if the old component was a successor, then observe that one of the new components must be a successor.

Finally, the only way the ambient isotopies could fail to preserve a successor is if they isotoped the successor across the ideal loop $\ell$ (thereby removing the successor's points of intersection with $\ell$).
But normalisation can never even temporarily increase the weight on edge $e$ (or indeed any edge of $\tri$), and so we can never isotope across the inner arc.
We can never isotope across the outer arc either, because no longitude cobounds an embedded disc with the outer arc (as observed earlier).
Therefore, the ambient isotopies involved in the normalisation procedure must always preserve the existence of a successor.
This completes the proof.
\end{proof}

Suppose now that $K$ is composite.
Write $K = K_1 \# \cdots \# K_m$ for the unique prime factorisation of $K$ (hence each $K_i$, $1\leqslant i \leqslant m$, is nontrivial and prime).
Starting with an edge-ideal triangulation $(\tri,\ell)$ for $K$, our results from \Cref{ssec:crushIdealLoop} ensure that we can iteratively find quad vertex spheres to crush at least $m-1$ times to obtain edge-ideal triangulations for each of the prime knots $K_1,\ldots,K_m$.
We show that if we only crush $m-1$ times, then one additional crush is always possible, and so in fact it is always possible to crush at least $m$ times in total.

To see this, note that if we only crush $m-1$ times, then every crush performs a nontrivial connected sum decomposition.
So after each crush, by \Cref{lem:crushW=2}, every ideal segment survives to form a new ideal edge.
Since each crushed sphere intersects the ideal loop twice, the number of ideal segments is exactly $2$ more than the number of ideal edges, and hence the number of ideal edges increases by exactly $2$ after each crush.
Thus, we end up with $m$ prime knots represented by a total of at least $2m-1$ ideal edges;
in other words, we have an ``excess'' of $m-1$ ideal edges.
Consequently, at least one of the prime knots $K_i$ is represented by an ideal loop $\ell_i$ of length greater than one,
so following the proof of \Cref{lem:minEdgeIdealPrime} gives the additional crush that we require.

Since we strictly decrease the overall number of tetrahedra after each crush, crushing $m$ times establishes the following result:

\begin{proposition}\label{prop:edgeIdealBound}
For any composite knot $K$, with unique prime factorisation $K = K_1 \# \cdots \# K_m$, we have
\[
\tilde{c}(\mathbb{S}^3 \setminus K) \geqslant m + \sum_{i=1}^{m} \tilde{c}(\mathbb{S}^3 \setminus K_i).
\]
\end{proposition}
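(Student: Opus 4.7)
The plan is to start with a minimum edge-ideal triangulation $(\tri,\ell)$ of $K$, so that $|\tri|=\tilde{c}(\mathbb{S}^3\setminus K)$, and then iteratively apply the crushing machinery of \Cref{ssec:crushIdealLoop} to decompose $\ell$ into the prime factorisation of $K$. By \Cref{cor:quadVertexComposite}, whenever a current edge-ideal component represents a composite knot we can find a quad vertex normal $2$-sphere $S$ with $w_\ell(S)\in\{0,2\}$, and crushing $S$ is nontrivial and therefore strictly decreases the total tetrahedron count by at least one. Thus the procedure terminates with a collection $(\tri_1,\ell_1),\ldots,(\tri_m,\ell_m)$ of edge-ideal triangulations of the prime summands $K_1,\ldots,K_m$. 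Letting $c$ denote the total number of crushes performed, we obtain
\[
\tilde{c}(\mathbb{S}^3\setminus K) = |\tri| \;\geqslant\; c + \sum_{i=1}^{m}|\tri_i| \;\geqslant\; c + \sum_{i=1}^{m}\tilde{c}(\mathbb{S}^3\setminus K_i),
\]
so the target inequality reduces to showing $c\geqslant m$.

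By \Cref{lem:crushW=0,lem:crushW=2}, each crush can increase the number of ideal-loop components by at most one, which already gives $c\geqslant m-1$. The heart of the argument is ruling out the boundary case $c=m-1$. If $c=m-1$, then in order to grow the loop count from $1$ to $m$ in only $m-1$ steps, every single crush must have been a nontrivial connected-sum decomposition, i.e.\ \Cref{case:loopSurvives} on both sides in \Cref{lem:crushW=2}; any other outcome would leave the loop count stagnant or even decrease it. In such a crush, all ideal segments produced by cutting $\ell$ along $S$ survive as new ideal edges, and a short case analysis -- distinguishing whether the two intersection points of $S$ with $\ell$ lie on the same edge of $\ell$ or on two different edges -- shows that the total ideal-edge count grows by exactly $2$. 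Starting from length at least $1$, after $m-1$ such crushes the $m$ resulting loops have combined length at least $2m-1$, so by the pigeonhole principle at least one loop $\ell_j$ has length $\geqslant 2$.

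At that point I would invoke \Cref{lem:minEdgeIdealPrime} applied to the prime knot $K_j$: since $\ell_j$ has length greater than one, the proof of that lemma produces one more nontrivial quad vertex $2$-sphere whose crushing strictly reduces $|\tri_j|$, so $|\tri_j|\geqslant \tilde{c}(\mathbb{S}^3\setminus K_j)+1$. Feeding this extra tetrahedron into the displayed inequality upgrades the bound from $(m-1) + \sum_i \tilde{c}(\mathbb{S}^3\setminus K_i)$ to $m + \sum_i \tilde{c}(\mathbb{S}^3\setminus K_i)$, contradicting $c = m-1$ and completing the argument. The main obstacle I expect is the careful bookkeeping in the edge-count step: one has to verify in every subcase of \Cref{lem:crushW=2} that a crush with both sides in \Cref{case:loopSurvives} genuinely adds exactly two to the total ideal-edge count, with no unforeseen merging or identification of edges that would spoil the pigeonhole. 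Once that combinatorial detail is pinned down, the rest of the proof is a clean accounting argument.
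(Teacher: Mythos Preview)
Your proposal is correct and follows essentially the same route as the paper: crush iteratively, observe that if only $m-1$ crushes suffice then each one was a genuine connected-sum split so the total ideal-edge count grows by exactly two per step, pigeonhole to find a prime summand with loop length $\geqslant 2$, and invoke the proof of \Cref{lem:minEdgeIdealPrime} to extract one further crush. Two small remarks: the paper handles your ``edge-count bookkeeping'' obstacle in one line by noting that two intersection points yield exactly two more ideal segments than ideal edges (avoiding your case split), and your phrase ``contradicting $c=m-1$'' is a slip in wording---nothing is contradicted, you simply recover the desired bound in that case as well.
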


Our proof of \Cref{prop:edgeIdealBound} assumes that any additional sphere we can crush eliminates all excess ideal edges at once.
However, it might be possible to continue crushing spheres in a more controlled way -- for instance, so that each crush only removes one excess ideal edge -- and thereby obtain a stronger bound.
This makes no difference when $m=2$, but even in this case proving tightness of the bound in \Cref{prop:edgeIdealBound} is not straightforward, as demonstrated by the following example.

\begin{example}
  \label{appex:bounds}
  From the census of small triangulations of $\mathbb{S}^3$ we see that there are two knot complements of ideal edge complexity $1$: the unknot and the trefoil $3_1$ (both are represented by edges in the unique one-tetrahedron one-vertex $3$-sphere triangulation).
The only knot complement of edge-ideal complexity $2$ is $5_1$.
The knot complements of edge-ideal complexity $3$ are $4_1$, $7_1$, $8_{19}$ and $10_{124}$.

  It follows from \Cref{prop:edgeIdealBound} that the edge-ideal complexity of the complements of $3_1 \# 3_1$ and $3_1 \# 5_1$ are at least $4$ and $5$, respectively.
  However, such triangulations of $\mathbb{S}^3$ do not exist in the census.
Instead, the smallest triangulations of $\mathbb{S}^3$ containing an ideal edge of type $3_1 \# 3_1$ and $3_1 \# 5_1$ have sizes $5$ and $6$, respectively.
\end{example}

\Cref{appex:bounds} motivates the following question.

\begin{question}
    Is there a composite knot that realises tightness for \Cref{prop:edgeIdealBound}?
\end{question}

\subsection{Triangulation complexity of ideal triangulations.}

We now adapt our observations about edge-ideal triangulations to obtain results about the complexity of ideal (in the usual sense) triangulations of knot complements.

Given an edge-ideal triangulation $(\tri,\ell)$ of $K$ with $\ell$ consisting of $k$ edges, we can obtain an ideal triangulation of the complement of $K$ by pinching all of these edges.
Moreover, this can always be done at the cost of adding just $k+1$ additional tetrahedra.
In detail, we can first reduce the length of the ideal loop to one by inserting $k-1$ snapped balls, as described in \Cref{app:ssec:simplifyEdgeIdeal}.
Then, to pinch the entire ideal loop down to a single ideal vertex, we insert Jeff Weeks' two-tetrahedron triangular-pillow-with-tunnel
(in Regina, this can be done by calling the \texttt{pinchEdge()} routine from the \texttt{Triangulation<3>} class).

When $K$ is prime, recall that the edge-ideal complexity of $K$ is always realised by an edge-ideal triangulation whose ideal loop has length one.
As just described, we can turn this into an ideal triangulation by inserting two extra tetrahedra, so we have the following:

\begin{corollary}\label{cor:idealBound}
    Let $K$ be a prime knot. Then
    \[
        \hat{c}(\mathbb{S}^3 \setminus K) \leqslant \tilde{c}(\mathbb{S}^3 \setminus K) + 2
    \]
    where $\hat{c}(M)$ denotes the triangulation complexity over the class of ideal triangulations.
\end{corollary}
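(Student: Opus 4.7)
The plan is straightforward given the machinery already in place. Start with an edge-ideal triangulation $(\tri,\ell)$ of $K$ that realises the edge-ideal complexity, so that $|\tri| = \tilde{c}(\mathbb{S}^3 \setminus K)$. Since $K$ is prime, \Cref{lem:minEdgeIdealPrime} guarantees that $\ell$ consists of a single edge $e$ of $\tri$. The task reduces to converting $(\tri,\ell)$ into an ideal triangulation of $\mathbb{S}^3 \setminus K$ at a cost of at most $2$ extra tetrahedra.

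To effect the conversion, I would pinch the single ideal edge $e$ down to an ideal vertex by inserting Jeff Weeks' two-tetrahedron triangular-pillow-with-tunnel (exactly as described in \Cref{app:ssec:simplifyEdgeIdeal}, but used for a different purpose here). This operation identifies the two endpoints of $e$ and creates a small tubular neighbourhood of $e$ whose boundary is a torus; removing this neighbourhood from $\mathbb{S}^3$ yields precisely the knot exterior $\mathbb{S}^3 \setminus K$. Combinatorially, the resulting complex has $|\tri| + 2$ tetrahedra and a single (ideal) vertex whose link is a torus, so it is a valid ideal triangulation of $\mathbb{S}^3 \setminus K$.

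Combining these two observations gives
\[
\hat{c}(\mathbb{S}^3 \setminus K) \leqslant |\tri| + 2 = \tilde{c}(\mathbb{S}^3 \setminus K) + 2,
\]
as required. Note that because $\ell$ already has length one, the snapped-ball insertions from \Cref{app:ssec:simplifyEdgeIdeal} are not needed; this is precisely where the primality hypothesis on $K$ (via \Cref{lem:minEdgeIdealPrime}) is used to keep the overhead down to the additive constant $2$.

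The only genuine subtlety is verifying that the pinch operation produces a bona fide ideal triangulation, i.e.\ that the link of the new ideal vertex is a torus rather than some other surface. This is built into the design of Weeks' gadget (and is what the \texttt{pinchEdge()} routine in Regina implements), so no further work is required. In short, there is no real obstacle; the corollary is essentially an immediate packaging of \Cref{lem:minEdgeIdealPrime} together with the standard edge-pinching construction.
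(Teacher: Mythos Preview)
Your proof is correct and follows essentially the same argument as the paper: take a minimal edge-ideal triangulation, invoke \Cref{lem:minEdgeIdealPrime} to get a length-one ideal loop, and then pinch that single edge to an ideal vertex using Weeks' two-tetrahedron triangular-pillow-with-tunnel. One small wording issue: since $\ell$ has length one it is already a loop edge with a single endpoint, so the gadget is not ``identifying the two endpoints of $e$'' but rather collapsing the loop edge itself to an ideal vertex (equivalently, drilling its regular neighbourhood); your subsequent description of the effect is nonetheless accurate.
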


Moreover, when $K$ is composite, applying \Cref{cor:idealBound} to each prime summand $K_i$ in \Cref{prop:edgeIdealBound} yields an additional lower bound
$\tilde{c}(\mathbb{S}^3 \setminus K) \geqslant -m + \sum_{i=1}^{m} \hat{c}(\mathbb{S}^3 \setminus K_i)$.

\bibliography{references}

\begin{thebibliography}{10}

\bibitem{agol2006computational}
I.~Agol, J.~Hass, and W.~Thurston.
\newblock The computational complexity of knot genus and spanning area.
\newblock {\em Trans. Amer. Math. Soc.}, 358(9):3821--3850, 2006.
\newblock \href {https://doi.org/10.1090/S0002-9947-05-03919-X}
  {\path{doi:10.1090/S0002-9947-05-03919-X}}.

\bibitem{AltmannMCMC}
E.~G. Altmann and J.~Spreer.
\newblock {Sampling triangulations of manifolds using Monte Carlo methods}.
\newblock arXiv:2310.07372, 2024.
\newblock Preprint, to appear in Exp. Math. Acceptance date: 8 Nov 2024.

\bibitem{aribi2023Htriangulation}
F.~B. Aribi, F.~Gu\'eritaud, and E.~Piguet-Nakazawa.
\newblock Geometric triangulations and the teichm\"uller tqft volume conjecture
  for twist knots.
\newblock {\em Quantum Topol.}, 14(2):285--406, 2023.

\bibitem{baldwin2019complexity}
J.~A. Baldwin and S.~Sivek.
\newblock On the complexity of torus knot recognition.
\newblock {\em Trans. Amer. Math. Soc.}, 371:3831--3855, 2019.
\newblock \href {https://doi.org/10.1090/tran/7394}
  {\path{doi:10.1090/tran/7394}}.

\bibitem{Benedetti12}
B.~Benedetti.
\newblock Discrete {M}orse theory for manifolds with boundary.
\newblock {\em Trans. Amer. Math. Soc.}, 364(12):6631--6670, 2012.
\newblock \href {https://doi.org/10.1090/S0002-9947-2012-05614-5}
  {\path{doi:10.1090/S0002-9947-2012-05614-5}}.

\bibitem{BenedettiLutz:KnotsInCollapsible}
B.~Benedetti and F.~H. Lutz.
\newblock Knots in collapsible and non-collapsible balls.
\newblock {\em Electron. J. Combin.}, 20(3):Paper 31, 29, 2013.
\newblock \href {https://doi.org/10.37236/3319} {\path{doi:10.37236/3319}}.

\bibitem{DBLP:journals/dagstuhl-reports/BuchinCMSH24}
M.~Buchin, J.~Cardinal, A.~de~Mesmay, J.~Spreer, and A.~He.
\newblock Triangulations in geometry and topology (dagstuhl seminar 24072).
\newblock {\em Dagstuhl Reports}, 14(2):120--163, 2024.
\newblock URL: \url{https://doi.org/10.4230/DagRep.14.2.120}, \href
  {https://doi.org/10.4230/DAGREP.14.2.120}
  {\path{doi:10.4230/DAGREP.14.2.120}}.

\bibitem{Burton09Converting}
B.~A. Burton.
\newblock Converting between quadrilateral and standard solution sets in normal
  surface theory.
\newblock {\em Alg. Geom. Topol.}, 9:2121--2174, 2009.
\newblock \href {https://doi.org/10.2140/agt.2009.9.2121}
  {\path{doi:10.2140/agt.2009.9.2121}}.

\bibitem{burton2013regina}
B.~A. Burton.
\newblock Computational topology with {R}egina: algorithms, heuristics and
  implementations.
\newblock In {\em Geometry and Topology Down Under}, volume 597 of {\em
  Contemp. Math.}, pages 195--224. Amer. Math. Soc., Providence, RI, 2013.
\newblock \href {https://doi.org/10.1090/conm/597/11877}
  {\path{doi:10.1090/conm/597/11877}}.

\bibitem{burton2014crushing}
B.~A. Burton.
\newblock A new approach to crushing 3-manifold triangulations.
\newblock {\em Discrete Comput. Geom.}, 52(1):116--139, 2014.
\newblock \href {https://doi.org/10.1007/s00454-014-9572-y}
  {\path{doi:10.1007/s00454-014-9572-y}}.

\bibitem{burton2020knots}
B.~A. Burton.
\newblock {The Next 350 Million Knots}.
\newblock In Sergio Cabello and Danny~Z. Chen, editors, {\em 36th International
  Symposium on Computational Geometry (SoCG 2020)}, volume 164 of {\em Leibniz
  International Proceedings in Informatics (LIPIcs)}, pages 25:1--25:17.
  Schloss Dagstuhl--Leibniz-Zentrum f{\"u}r Informatik, 2020.
\newblock URL: \url{https://drops.dagstuhl.de/opus/volltexte/2020/12183}, \href
  {https://doi.org/10.4230/LIPIcs.SoCG.2020.25}
  {\path{doi:10.4230/LIPIcs.SoCG.2020.25}}.

\bibitem{Regina}
B.~A. Burton, R.~Budney, W.~Pettersson, et~al.
\newblock Regina: Software for low-dimensional topology, 1999--2023.
\newblock Version 7.3.
\newblock URL: \url{https://regina-normal.github.io}.

\bibitem{BHHP2024arXiv}
B.~A. Burton, T.~{de Paiva}, A.~He, and C.~O.~Y. Hui.
\newblock Crushing surfaces of positive genus.
\newblock {\texttt{arXiv:2403.11523}}, 2024.
\newblock To appear in {\textit{Algebr. Geom. Topol.}}.

\bibitem{burton2023counterexamples}
B.~A. Burton and A.~He.
\newblock Finding large counterexamples by selectively exploring the {Pachner}
  graph.
\newblock In Erin~W. Chambers and Joachim Gudmundsson, editors, {\em 39th
  International Symposium on Computational Geometry (SoCG 2023)}, volume 258 of
  {\em Leibniz International Proceedings in Informatics (LIPIcs)}, pages
  21:1--21:16. Schloss Dagstuhl--Leibniz-Zentrum f{\"u}r Informatik, 2023.
\newblock URL: \url{https://drops.dagstuhl.de/opus/volltexte/2023/17871}, \href
  {https://doi.org/10.4230/LIPIcs.SoCG.2023.21}
  {\path{doi:10.4230/LIPIcs.SoCG.2023.21}}.

\bibitem{burton2012unknot}
B.~A. Burton and M.~{\"O}zlen.
\newblock A fast branching algorithm for unknot recognition with experimental
  polynomial-time behaviour.
\newblock {\texttt{arXiv:1211.1079}}, 2012.
\newblock To appear in \emph{Math. Program.}

\bibitem{SnapPy}
M.~Culler, N.~M. Dunfield, M.~Goerner, J.~R. Weeks, et~al.
\newblock Snap{P}y, a computer program for studying the geometry and topology
  of $3$-manifolds, 2009--2023.
\newblock Version 3.1.1.
\newblock URL: \url{http://snappy.computop.org}.

\bibitem{dunfield2022diagram}
N.~M. Dunfield, M.~Obeidin, and C.~G. Rudd.
\newblock {Computing a Link Diagram from Its Exterior}.
\newblock In Xavier Goaoc and Michael Kerber, editors, {\em 38th International
  Symposium on Computational Geometry (SoCG 2022)}, volume 224 of {\em Leibniz
  International Proceedings in Informatics (LIPIcs)}, pages 37:1--37:24,
  Dagstuhl, Germany, 2022. Schloss Dagstuhl -- Leibniz-Zentrum f{\"u}r
  Informatik.
\newblock URL:
  \url{https://drops.dagstuhl.de/entities/document/10.4230/LIPIcs.SoCG.2022.37},
  \href {https://doi.org/10.4230/LIPIcs.SoCG.2022.37}
  {\path{doi:10.4230/LIPIcs.SoCG.2022.37}}.

\bibitem{hass1999computational}
J.~Hass, J.~C. Lagarias, and N.~Pippenger.
\newblock The computational complexity of knot and link problems.
\newblock {\em J. {ACM}}, 46(2):185--211, 1999.
\newblock \href {https://doi.org/10.1145/301970.301971}
  {\path{doi:10.1145/301970.301971}}.

\bibitem{ibarra2024Htriangulation}
D.~Ibarra, D.~V. Mathews, J.~S. Purcell, and J.~Spreer.
\newblock Triangulations of the $3$-sphere with knotted edge.
\newblock https://arxiv.org/abs/2411.18938, 2024.

\bibitem{Ichihara2023Crush}
K.~Ichihara, Y.~Nishimura, and S.~Tani.
\newblock The computational complexity of classical knot recognition.
\newblock {\em Journal of Knot Theory and Its Ramifications}, 32(11):2350069,
  2023.
\newblock \href {https://doi.org/10.1142/S0218216523500694}
  {\path{doi:10.1142/S0218216523500694}}.

\bibitem{jackson2024seifert}
A.~Jackson.
\newblock {Recognition of Seifert fibered spaces with boundary is in NP}.
\newblock {\em Math. Ann.}, 2024.
\newblock \href {https://doi.org/10.1007/s00208-024-02920-x}
  {\path{doi:10.1007/s00208-024-02920-x}}.

\bibitem{jaco2003efficient}
W.~Jaco and J.~H. Rubinstein.
\newblock {$0$}-efficient triangulations of 3-manifolds.
\newblock {\em J. Differential Geom.}, 65(1):61--168, 2003.
\newblock \href {https://doi.org/10.4310/jdg/1090503053}
  {\path{doi:10.4310/jdg/1090503053}}.

\bibitem{jaco1995algorithms}
W.~Jaco and J.~L. Tollefson.
\newblock Algorithms for the complete decomposition of a closed {$3$}-manifold.
\newblock {\em Illinois J. Math.}, 39(3):358--406, 1995.

\bibitem{kuperberg2014knottedness}
G.~Kuperberg.
\newblock Knottedness is in {NP}, modulo {GRH}.
\newblock {\em Adv. Math.}, 256:493--506, 2014.
\newblock \href {https://doi.org/10.1016/j.aim.2014.01.007}
  {\path{doi:10.1016/j.aim.2014.01.007}}.

\bibitem{lackenby2016efficient}
M.~Lackenby.
\newblock The efficient certification of knottedness and {Thurston} norm.
\newblock {\em Adv. Math.}, 387, 2021.
\newblock Article ID: 107796.
\newblock \href {https://doi.org/10.1016/j.aim.2021.107796}
  {\path{doi:10.1016/j.aim.2021.107796}}.

\bibitem{LackenbyTalk}
M.~Lackenby.
\newblock Unknot recognition in quasi-polynomial time.
\newblock \url{https://www.maths.ox.ac.uk/node/38304}, 2021.

\bibitem{Lickorish1991}
W.~B.~R. Lickorish.
\newblock Unshellable triangulations of spheres.
\newblock {\em European J. Combin.}, 12(6):527--530, 1991.
\newblock \href {https://doi.org/10.1016/S0195-6698(13)80103-5}
  {\path{doi:10.1016/S0195-6698(13)80103-5}}.

\bibitem{matveev2007algorithmic}
S.~Matveev.
\newblock {\em Algorithmic Topology and Classification of $3$-Manifolds},
  volume~9 of {\em Algorithms Comput. Math.}
\newblock Springer, Berlin, 2nd edition, 2007.
\newblock \href {https://doi.org/10.1007/978-3-540-45899-9}
  {\path{doi:10.1007/978-3-540-45899-9}}.

\bibitem{schleimer2011sphere}
S.~Schleimer.
\newblock Sphere recognition lies in {NP}.
\newblock In {\em Low-dimensional and symplectic topology}, volume~82 of {\em
  Proc. Sympos. Pure Math.}, pages 183--213. Amer. Math. Soc., Providence, RI,
  2011.
\newblock \href {https://doi.org/10.1090/pspum/082/2768660}
  {\path{doi:10.1090/pspum/082/2768660}}.

\bibitem{schubert49unique}
H.~Schubert.
\newblock {\em {Die eindeutige Zerlegbarkeit eines Knotens in Primknoten}}.
\newblock Sitzungsberichte der Heidelberger Akademie der Wissenschaften.
  Springer, 1949.
\newblock \href {https://doi.org/10.1007/978-3-642-45813-2}
  {\path{doi:10.1007/978-3-642-45813-2}}.

\end{thebibliography}

\end{document}